\theoremstyle{definition}
\newtheorem*{defn*}{\protect\definitionname}
\theoremstyle{plain}
\newtheorem{thm}{\protect\theoremname}[section]
\theoremstyle{plain}
\newtheorem{prop}[thm]{\protect\propositionname}
\theoremstyle{remark}
\newtheorem{rem}[thm]{\protect\remarkname}
\theoremstyle{definition}
\newtheorem{example}[thm]{\protect\examplename}
\theoremstyle{plain}
\newtheorem{lem}[thm]{\protect\lemmaname}
\newcommand*{\e}{\mathrm{e}}
\renewcommand*{\i}{\mathrm{i}}
\newcommand{\m}{\operatorname{m}}
\newcommand{\R}{\mathbb{R}}
\newcommand{\N}{\mathbb{N}}
\renewcommand{\C}{\mathbb{C}}
\newcommand{\dom}{\operatorname{dom}}
\newcommand{\ran}{\operatorname{ran}}
\newcommand{\spt}{\operatorname{spt}}
\renewcommand{\d}{\,\mathrm{d}}
\renewcommand{\Re}{\operatorname{Re}}
\newcommand{\dive}{\operatorname{div}}
\newcommand{\grad}{\operatorname{grad}}
\renewcommand{\tilde}{\widetilde}
\newcommand{\IV}{\operatorname{IV}}
\newcommand{\His}{\operatorname{His}}
\theoremstyle{definition}
\providecommand{\definitionname}{Definition}
\providecommand{\examplename}{Example}
\providecommand{\lemmaname}{Lemma}
\providecommand{\propositionname}{Proposition}
\providecommand{\remarkname}{Remark}
\providecommand{\theoremname}{Theorem}
\begin{document}
\title{Semigroups and Evolutionary Equations\thanks{Based on parts of the authors Habilitation thesis \cite{Trostorff_habil}.}}
\author{Sascha Trostorff\thanks{Mathematisches Seminar, CAU Kiel, Germany, email: trostorff@math.uni-kiel.de}}

\maketitle
\textbf{Abstract. }We show how strongly continuous semigroups can
be associated with evolutionary equations. For doing so, we need to
define the space of admissible history functions and initial states.
Moreover, the initial value problem has to be formulated within the
framework of evolutionary equations, which is done by using the theory
of extrapolation spaces. The results are applied to two examples.
First, differential-algebraic equations in infinite dimensions are
treated and it is shown, how a $C_{0}$-semigroup can be associated
with such problems. In the second example we treat a concrete hyperbolic
delay equation.\medskip{}

\textbf{Keywords: }Evolutionary equations, $C_{0}$-semigroups, admissible
history and initial value\textbf{ \medskip{}
}

\textbf{2010 MSC: }46N20, 47D06, 35F16 \textbf{\medskip{}
}

\section{Introduction}

In this article we bring together two theories for dealing with partial
differential equations: the theory of $C_{0}$-semigroups on the one
hand and the theory of evolutionary equations on the other hand. In
particular, we show how $C_{0}$-semigroups can be associated with
a given evolutionary equation. \\
The framework of evolutionary equations was introduced in the seminal
paper \cite{Picard}. Evolutionary equations are equations of the
form 
\begin{equation}
\left(\partial_{t}M(\partial_{t})+A\right)U=F,\label{eq:evo}
\end{equation}
where $\partial_{t}$ denotes the temporal derivative, $M(\partial_{t})$
is a bounded operator in space-time defined via a functional calculus
for $\partial_{t}$ and $A$ is an, in general, unbounded spatial
operator. The function $F$ defined on $\R$ and taking values in
some Hilbert space is a given source term and one seeks for a solution
$U$ of the above equation. Here, the notion of solution is quite
weak, since one just requires that the solution should belong to some
exponentially weighted $L_{2}$-space. Thus, all operators have to
be introduced in these spaces. Especially, the time derivative is
introduced as an unbounded normal operator on such a space and so,
in order to solve \prettyref{eq:evo}, one has to deal with the sum
of two unbounded operators ($\partial_{t}$ and $A$). Problems of
the form \prettyref{eq:evo} cover a broad spectrum of different types
of differential equations, such as hyperbolic, parabolic, elliptic
and mixed-type problems, integro-differential equations \cite{Trostorff2012_integro},
delay equations \cite{Kalauch2011} and fractional differential equations
\cite{Picard2013_fractional}. Also, generalisations to some nonlinear
\cite{Trostorff2012_NA,Trostorff2012_nonlin_bd} and non-autonomous
problems \cite{Picard2013_nonauto,Trostorff2013_nonautoincl,Waurick2015_nonauto,Trostorff2018}
are possible. The solution theory is quite easy and just relies on
pure Hilbert space theory.\\
On the other hand, there is the well-established theory of $C_{0}$-semigroups
dealing with so-called Cauchy problems (see e.g. \cite{hille1957functional,Pazy1983,engel2000one}).
These are abstract equations of the form 
\begin{align}
(\partial_{t}+A)U & =F,\nonumber \\
U(0) & =U_{0},\label{eq:Cauchy}
\end{align}
where $A$ is a suitable operator acting on some Banach space. Although,
\prettyref{eq:Cauchy} just seems to be a special case of \prettyref{eq:evo}
for $M(\partial_{t})=1$, the theories are quite different. While
we focus on solutions lying in $L_{2}$ in the theory of evolutionary
equations, one seeks for continuous solutions in the framework of
$C_{0}$-semigroups. Moreover, while \prettyref{eq:evo} holds on
$\R$ as time horizon, \prettyref{eq:Cauchy} just holds on $\R_{\geq0}$
and is completed by an initial condition. Thus, in order to associate
a $C_{0}$-semigroup with equations of the form \prettyref{eq:evo}
one has to find a way to formulate initial value problems and then
derive assumptions, which would yield the additional regularity for
the solutions (namely continuity with respect to time). This is the
purpose of this work.\\
As we have indicated above, equations of the form \prettyref{eq:evo}
also cover delay equations, where it is more natural to prescribe
histories instead of an initial state at time $0$. Moreover, \prettyref{eq:evo}
also covers so-called differential algebraic equations (see \cite{Mehrmann2006}
for the finite-dimensional case and \cite{Trostorff_DAE2019,Trostorff_DAE_higherindex_2018,Trostorff2019_DAE_semigroup}
for infinite dimensions), where not every element of the underlying
state space can be used as an initial state. Thus, one is confronted
with the problem of defining the `right' initial values and histories
for \prettyref{eq:evo} depending on the operators involved. Moreover,
one has to incorporate these initial conditions within the framework
of evolutionary equations, that is, initial conditions should enter
the equation as a suitable source term on the right-hand side. This
can be done by using extrapolation spaces and by extending the solution
theory to those. Then it will turn out that initial conditions can
be formulated by distributional right hand sides, which belong to
a suitable extrapolation space associated with the time derivative
operator $\partial_{t}$. Having the right formulation of initial
value problems at hand, one can associate a $C_{0}$-semigroup on
a product space consisting of the current state in the first and the
past of the unknown in the second component. This idea was already
used to deal with delay equations within the theory of $C_{0}$-semigroups,
see \cite{Batkai_2005}. As it turns out, this product space is not
closed (as a subspace of a suitable Hilbert space) and in order to
extend the associated $C_{0}$-semigroup to its closure one needs
to impose similar conditions as in the Hille-Yosida Theorem. The key
result, which will be used to extend the semigroup is the theorem
of Widder-Arendt (see \cite{Arendt1987} or \prettyref{thm:WA} below).\\
The paper is structured as follows: We begin by recalling the basic
notions and well-posedness results for evolutionary problems (\prettyref{sec:Evolutionary-Problems})
and for extrapolation spaces (\prettyref{sec:Extrapolation-spaces}).
Then, in order to formulate initial value problems within the framework
of evolutionary equations, we introduce a cut-off operator as an unbounded
operator on the extrapolation space associated with the time derivative
and discus some of its properties (\prettyref{sec:Cut-off-operators}).
\prettyref{sec:Admissible-histories-for} is then devoted to determine
the `right' space of admissible histories and initial values for
a given evolutionary problem. We note here that we restrict ourselves
to homogeneous problems in the sense that we do not involve an additional
source term besides the given history. The main reason for that is
that such source terms would restrict and change the set of admissible
histories, a fact which is well-known in the theory of differential-algebraic
equations. In \prettyref{sec:-semigroups-associated-with} we associate
a $C_{0}$-semigroup on the before introduced product space of admissible
initial values and histories and prove the main result of this article
(\prettyref{thm:HY}). In the last section we discuss two examples.
First, we apply the results to abstract differential algebraic equations
and thereby re-prove the Theorem of Hille-Yosida as a special case.
In the second example, we discuss a concrete hyperbolic delay equation
and prove that we can associate a $C_{0}$-semigroup with this problem.
\\
Throughout, every Hilbert space is assumed to be complex and the inner
product $\langle\cdot,\cdot\rangle$ is conjugate-linear in the first
and linear in the second argument.

\section{Evolutionary Problems\label{sec:Evolutionary-Problems}}

We recall the basic notions and results for evolutionary problems,
as they were introduced in \cite{Picard} (see also \cite[Chapter 6]{Picard_McGhee}).
We begin by the definition of the time derivative operator on an exponentially
weighted $L_{2}$-space (see also \cite{picard1989hilbert}).
\begin{defn*}
Let $\rho\in\R$ and $H$ a Hilbert space. We set 
\[
L_{2,\rho}(\R;H)\coloneqq\{f:\R\to H\,;\,f\text{ measurable},\,\int_{\R}\|f(t)\|^{2}\e^{-2\rho t}\d t\}
\]
with the common identification of functions coinciding almost everywhere.
Then $L_{2,\rho}(\R;H)$ is a Hilbert space with respect to the inner
product
\[
\langle f,g\rangle_{\rho}\coloneqq\int_{\R}\langle f(t),g(t)\rangle\e^{-2\rho t}\d t\quad(f,g\in L_{2,\rho}(\R;H)).
\]
Moreover, we define the operator 
\[
\partial_{t,\rho}:H_{\rho}^{1}(\R;H)\subseteq L_{2,\rho}(\R;H)\to L_{2,\rho}(\R;H),\;f\mapsto f'
\]
where 
\[
H_{\rho}^{1}(\R;H)\coloneqq\{f\in L_{2,\rho}(\R;H)\,;\,f'\in L_{2,\rho}(\R;H)\}
\]
with $f'$ denoting the usual distributional derivative.
\end{defn*}
We recall some facts on the operator $\partial_{t,\rho}$ and refer
to \cite{Kalauch2011} for the respective proofs.
\begin{prop}
\label{prop:td}Let $\rho\in\R$ and $H$ a Hilbert space.

\begin{enumerate}[(a)]

\item The operator $\partial_{t,\rho}$ is densely defined, closed
and linear and $C_{c}^{\infty}(\R;H)$ is a core for $\partial_{t,\rho}$.

\item The spectrum of $\partial_{t,\rho}$ is given by 
\[
\sigma(\partial_{t,\rho})=\{\i t+\rho\,;\,t\in\R\}.
\]

\item For $\rho\ne0$ the operator $\partial_{t,\rho}$ is boundedly
invertible with $\|\partial_{t,\rho}^{-1}\|=\frac{1}{|\rho|}$ and
the inverse is given by 
\[
\left(\partial_{t,\rho}^{-1}f\right)(t)=\begin{cases}
\int_{-\infty}^{t}f(s)\d s & \text{ if }\rho>0,\\
-\int_{t}^{\infty}f(s)\d s & \text{ if }\rho<0
\end{cases}
\]
for $f\in L_{2,\rho}(\R;H)$ and $t\in\R$.

\item The operator $\partial_{t,\rho}$ is normal with $\partial_{t,\rho}^{\ast}=-\partial_{t,\rho}+2\rho.$

\item The following variant of Sobolev's embedding theorem holds:
\[
H_{\rho}^{1}(\R;H)\hookrightarrow C_{\rho}(\R;H)
\]
continuously, where 
\[
C_{\rho}(\R;H)\coloneqq\{f:\R\to H\,;\,f\text{ continuous, }\sup_{t\in\R}\|f(t)\|\e^{-\rho t}<\infty\}.
\]

\end{enumerate}
\end{prop}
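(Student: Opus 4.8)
The plan is to reduce everything to elementary facts about multiplication operators by conjugating $\partial_{t,\rho}$ with a unitary transformation built from an exponential weight and the (vector-valued) Fourier transform. First I would note that $\e^{-\rho\m}\colon L_{2,\rho}(\R;H)\to L_{2}(\R;H)$, $(\e^{-\rho\m}f)(t)\coloneqq\e^{-\rho t}f(t)$, is unitary, and that for $f\in H_{\rho}^{1}(\R;H)$ one has $\e^{-\rho\m}f\in H_{0}^{1}(\R;H)$ with distributional derivative $(\e^{-\rho\m}f)'=\e^{-\rho\m}(f'-\rho f)$; hence $\e^{-\rho\m}$ maps $\dom(\partial_{t,\rho})$ bijectively onto $\dom(\partial_{t,0})$ and intertwines $\partial_{t,\rho}$ with $\partial_{t,0}+\rho$. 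Composing with the Fourier transform $\mathcal F$, which is unitary on $L_{2}(\R;H)$ and turns $\partial_{t,0}$ into the operator $\i\m$ of multiplication by the independent variable, I obtain a unitary $\mathcal L_{\rho}\coloneqq\mathcal F\e^{-\rho\m}$ with
\[
\mathcal L_{\rho}\,\partial_{t,\rho}\,\mathcal L_{\rho}^{-1}=\i\m+\rho,
\]
the maximal operator of multiplication by $t\mapsto\i t+\rho$ on $L_{2}(\R;H)$.

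From this identity (a)--(d) are immediate. A multiplication operator by a continuous function is densely defined, closed and normal, which gives the first assertions of (a) and (d); that $C_{c}^{\infty}(\R;H)$ is a core follows because $\mathcal L_{\rho}$ carries it onto a dense subspace of the graph-norm domain of $\i\m+\rho$ (using that $\mathcal F$ preserves Schwartz functions), or by a direct mollification-and-cut-off argument in the weighted space. For (b), the spectrum of multiplication by a continuous function is the closure of its range, and here $\{\i t+\rho\,;\,t\in\R\}$ is already closed. For (d), the adjoint of multiplication by $\i t+\rho$ is multiplication by $-\i t+\rho=-(\i t+\rho)+2\rho$, and conjugating back with $\mathcal L_{\rho}$ yields $\partial_{t,\rho}^{\ast}=-\partial_{t,\rho}+2\rho$. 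For the norm statement in (c), when $\rho\ne0$ we have $|\i t+\rho|\geq|\rho|>0$ for all $t$, so $\i\m+\rho$ is boundedly invertible with inverse multiplication by $(\i t+\rho)^{-1}$, of norm $\sup_{t\in\R}|\i t+\rho|^{-1}=|\rho|^{-1}$.

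It then remains to identify $\partial_{t,\rho}^{-1}$ with the stated integral operator and to prove (e). For $\rho>0$ I would set $(Sf)(t)\coloneqq\int_{-\infty}^{t}f(s)\d s$: the integral converges absolutely for $f\in L_{2,\rho}(\R;H)$ by Cauchy--Schwarz against $\e^{\rho s}$, and one verifies that $S$ is bounded on $L_{2,\rho}(\R;H)$ (via a Hardy-type estimate, or by evaluating $\langle Sf,Sf\rangle_{\rho}$ through Fubini); computing distributional derivatives then gives $\partial_{t,\rho}Sf=f$ and $S\partial_{t,\rho}g=g$ for $g\in H_{\rho}^{1}(\R;H)$, so $S=\partial_{t,\rho}^{-1}$, and the case $\rho<0$ is symmetric. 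For the Sobolev embedding (e), I would again pass to $g\coloneqq\e^{-\rho\m}f\in H_{0}^{1}(\R;H)$ and use the classical one-dimensional argument: from $\|g(t)\|^{2}-\|g(s)\|^{2}=2\Re\int_{s}^{t}\langle g(r),g'(r)\rangle\d r$, averaging over $s$ in a bounded interval yields $\sup_{t\in\R}\|g(t)\|\leq C\|g\|_{H_{0}^{1}}$, while absolute continuity gives continuity of $g$; undoing the weight, $\|f(t)\|\e^{-\rho t}=\|g(t)\|$ gives $f\in C_{\rho}(\R;H)$ with continuous dependence on $f$ in the $H_{\rho}^{1}$-norm. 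The only genuinely delicate point is the boundedness of $S$ on the weighted space with the sharp constant $1/|\rho|$; everything else is bookkeeping around the unitary $\mathcal L_{\rho}$.
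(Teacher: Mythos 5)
Your argument is correct, and it is essentially the approach the paper itself relies on: the paper gives no proof here (it defers to Kalauch et al.), but immediately afterwards records the same unitary equivalence $\partial_{t,\rho}=\mathcal{L}_{\rho}^{\ast}(\i\m+\rho)\mathcal{L}_{\rho}$ via the Fourier--Laplace transform, from which (a)--(d) follow exactly as you describe, with (c)'s integral formula and (e) handled by the same direct weighted computations. The only points to write out carefully are the ones you already flag: that the image of the domain under $\mathcal{L}_{\rho}$ is the maximal domain of the multiplication operator, and that in (e) one first fixes the continuous representative of $g=\e^{-\rho\cdot}f$ before the pointwise estimate.
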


As a normal operator, $\partial_{t,\rho}$ possesses a natural functional
calculus, which can be described via the so-called Fourier-Laplace
transform.
\begin{defn*}
Let $\rho\in\R$ and $H$ a Hilbert space. We denote by $\mathcal{L}_{\rho}$
the unitary extension of the mapping 
\[
C_{c}(\R;H)\subseteq L_{2,\rho}(\R;H)\to L_{2}(\R;H),\;f\mapsto\left(t\mapsto\frac{1}{\sqrt{2\pi}}\int_{\R}\e^{-(\i t+\rho)s}f(s)\d s\right).
\]
\end{defn*}
\begin{rem}
Note that for $\rho=0,$ the operator $\mathcal{L}_{0}$ is nothing
but the classical Fourier transform, which is unitary due to Plancharel's
Theorem (see e.g. \cite[Theorem 9.13]{rudin1987real}). Since $\mathcal{L}_{\rho}=\mathcal{L}_{0}\exp(-\rho\cdot)$
with
\[
\exp(-\rho\cdot):L_{2,\rho}(\R;H)\to L_{2}(\R;H),\;f\mapsto\left(t\mapsto f(t)\e^{-\rho t}\right)
\]
for $t\in\R$, it follows that $\mathcal{L}_{\rho}$ is unitary as
a composition of unitary operators.
\end{rem}

\begin{prop}[{\cite[Corollary 2.5]{Kalauch2011}}]
 Let $\rho\in\R$ and $H$ a Hilbert space. We define the operator
$\m$ by 
\begin{align*}
\m & :\dom(\m)\subseteq L_{2}(\R;H)\to L_{2}(\R;H),\;f\mapsto\left(t\mapsto tf(t)\right)
\end{align*}
with maximal domain 
\[
\dom(\m)\coloneqq\{f\in L_{2}(\R;H)\,;\,(t\mapsto tf(t))\in L_{2}(\R;H)\}.
\]
Then 
\[
\partial_{t,\rho}=\mathcal{L}_{\rho}^{\ast}(\i\m+\rho)\mathcal{L}_{\rho}.
\]
\end{prop}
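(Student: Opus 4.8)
The plan is to exploit that, via the Fourier--Laplace transform $\mathcal{L}_{\rho}$, the time derivative $\partial_{t,\rho}$ turns into multiplication by the symbol $t\mapsto\i t+\rho$; the routine part is an integration by parts on a core, and the only delicate point will be that no domain is lost. Throughout I write $T\coloneqq\mathcal{L}_{\rho}^{\ast}(\i\m+\rho)\mathcal{L}_{\rho}$. Since $\i\m+\rho$ is just multiplication by the measurable function $t\mapsto\i t+\rho$, it is closed, densely defined and normal with $\dom(\i\m+\rho)=\dom(\m)$ and $(\i\m+\rho)^{\ast}=-\i\m+\rho$ (because $\m$ is self-adjoint and $\rho\in\R$); as $\mathcal{L}_{\rho}$ is unitary by the Remark above, $T$ inherits all of these properties, and in particular $T^{\ast}=-T+2\rho$.

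First I would establish the asserted identity on the core $C_{c}^{\infty}(\R;H)$. For $f\in C_{c}^{\infty}(\R;H)$ both $f$ and $f'$ lie in $C_{c}(\R;H)$, so $\mathcal{L}_{\rho}f$ and $\mathcal{L}_{\rho}f'$ are given by the explicit integral defining $\mathcal{L}_{\rho}$, and integrating by parts (the boundary terms vanish since $f$ has compact support) gives
\[(\mathcal{L}_{\rho}f')(t)=\frac{1}{\sqrt{2\pi}}\int_{\R}\e^{-(\i t+\rho)s}f'(s)\d s=(\i t+\rho)(\mathcal{L}_{\rho}f)(t)\qquad(t\in\R).\]
Since $\mathcal{L}_{\rho}f'\in L_{2}(\R;H)$, this shows $\mathcal{L}_{\rho}f\in\dom(\i\m+\rho)$ and $\mathcal{L}_{\rho}\partial_{t,\rho}f=(\i\m+\rho)\mathcal{L}_{\rho}f$, i.e.\ $\partial_{t,\rho}f=Tf$. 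Hence $\partial_{t,\rho}|_{C_{c}^{\infty}(\R;H)}\subseteq T$. Next I would pass to closures: by \prettyref{prop:td}(a) the space $C_{c}^{\infty}(\R;H)$ is a core for $\partial_{t,\rho}$, and $T$ is closed, so $\partial_{t,\rho}=\overline{\partial_{t,\rho}|_{C_{c}^{\infty}(\R;H)}}\subseteq T$.

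The remaining step, and the one I expect to be the crux, is the reverse inclusion $T\subseteq\partial_{t,\rho}$: a priori $\dom T=\{f\,;\,\mathcal{L}_{\rho}f\in\dom(\m)\}$ could be strictly larger than $H_{\rho}^{1}(\R;H)$. Here I would argue by duality. Both $\partial_{t,\rho}$ and $T$ are densely defined (the latter because $\dom(\m)$ is dense and $\mathcal{L}_{\rho}$ is unitary), so the inclusion $\partial_{t,\rho}\subseteq T$ from the previous step gives $T^{\ast}\subseteq\partial_{t,\rho}^{\ast}$. Using $T^{\ast}=-T+2\rho$ from above and $\partial_{t,\rho}^{\ast}=-\partial_{t,\rho}+2\rho$ from \prettyref{prop:td}(d), this reads $-T+2\rho\subseteq-\partial_{t,\rho}+2\rho$, i.e.\ $T\subseteq\partial_{t,\rho}$. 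Combining the two inclusions yields $\partial_{t,\rho}=T=\mathcal{L}_{\rho}^{\ast}(\i\m+\rho)\mathcal{L}_{\rho}$, as claimed.

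If one prefers not to invoke \prettyref{prop:td}(d), the reverse inclusion can instead be verified directly: for $g$ with $h\coloneqq\mathcal{L}_{\rho}g\in\dom(\m)$ one uses Plancherel's theorem together with the integration-by-parts identity above, tested against $\varphi\in C_{c}^{\infty}(\R;H)$, to see that the distributional derivative of $g$ is represented by $\mathcal{L}_{\rho}^{\ast}\big((\i\m+\rho)h\big)\in L_{2,\rho}(\R;H)$, whence $g\in H_{\rho}^{1}(\R;H)$ and $\partial_{t,\rho}g=Tg$. One may also simply reduce everything to the case $\rho=0$ by means of $\mathcal{L}_{\rho}=\mathcal{L}_{0}\exp(-\rho\cdot)$ and the intertwining relation $\partial_{t,\rho}=\exp(\rho\cdot)\,(\partial_{t,0}+\rho)\,\exp(-\rho\cdot)$, and then apply the above argument with $\rho=0$.
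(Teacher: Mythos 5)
Your proof is correct, but note that the paper itself gives no proof of this proposition at all: it is quoted verbatim from \cite[Corollary 2.5]{Kalauch2011}, so there is no in-paper argument to compare against. Your route is sound and complete: the integration by parts on $C_{c}^{\infty}(\R;H)$ gives $\partial_{t,\rho}\varphi=T\varphi$ there, the core property from \prettyref{prop:td}(a) plus closedness of $T$ (unitary image of a closed multiplication operator) yields $\partial_{t,\rho}\subseteq T$, and the duality step is valid, since $T^{\ast}=\mathcal{L}_{\rho}^{\ast}(-\i\m+\rho)\mathcal{L}_{\rho}=-T+2\rho$ and $\partial_{t,\rho}\subseteq T$ forces $T^{\ast}\subseteq\partial_{t,\rho}^{\ast}$, which with \prettyref{prop:td}(d) gives $T\subseteq\partial_{t,\rho}$. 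Two remarks. First, you do lean on \prettyref{prop:td}(d); inside this paper that is a previously stated fact (and it can be proved without the spectral representation, e.g. via the unitary $\exp(-\rho\cdot)$, which intertwines $\partial_{t,\rho}$ with $\partial_{t,0}+\rho$, and the skew-adjointness of $\partial_{t,0}$ on $L_{2}(\R;H)$), so no circularity arises here, but if you were reproving everything from scratch this dependence would have to be discharged. Second, a slightly quicker finish avoids adjoints altogether: $\partial_{t,\rho}$ and $T$ are both closed and have a common resolvent point (any $\lambda\notin\{\i t+\rho\,;\,t\in\R\}$, by \prettyref{prop:td}(b) and the explicit multiplication-operator form of $T$), and a closed operator extending another closed operator with a common point in both resolvent sets must coincide with it; this replaces your duality step and also covers the case $\rho=0$ without any separate reduction.
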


Using the latter proposition, we can define an operator-valued functional
calculus for $\partial_{t,\rho}$ as follows.
\begin{defn*}
Let $\rho\in\R$ and $H$ a Hilbert space. Let $F:\{\i t+\rho\,;\,t\in\R\}\to L(H)$
be strongly measurable and bounded. Then we define
\[
F(\partial_{t,\rho})\coloneqq\mathcal{L}_{\rho}^{\ast}F(\i\m+\rho)\mathcal{L}_{\rho}\in L(L_{2,\rho}(\R;H)),
\]
where 
\[
F(\i\m+\rho)f\coloneqq\left(t\mapsto F(\i t+\rho)f(t)\right)\quad(f\in L_{2}(\R;H)).
\]
\end{defn*}
An important class of operator-valued function of $\partial_{t,\rho}$
are those functions yielding causal operators. 
\begin{prop}[{\cite[Theorem 6.1.1, Theorem 6.1.4]{Picard_McGhee}}]
\label{prop:material law} Let $\rho_{0}\in\R$ and $H$ a Hilbert
space. If $M:\C_{\Re>\rho_{0}}\to L(H)$ is analytic and bounded,
then $M(\partial_{t,\rho})$ is causal for each $\rho>\rho_{0},$
i.e., for $f\in L_{2,\rho}(\R;H)$ with $\spt f\subseteq\R_{\geq a}$
for some $a\in\R$ it follows that 
\[
\spt M(\partial_{t,\rho})f\subseteq\R_{\geq a}.
\]
Moreover, $M(\partial_{t,\rho})$ is independent of the choice of
$\rho>\rho_{0}$ in the sense that 
\[
M(\partial_{t,\rho})f=M(\partial_{t,\mu})f\quad(f\in L_{2,\rho}(\R;H)\cap L_{2,\mu}(\R;H))
\]
for each $\rho,\mu>\rho_{0}$.
\end{prop}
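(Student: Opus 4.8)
The plan is to pass to the Fourier--Laplace picture, where by definition $M(\partial_{t,\rho})=\mathcal L_\rho^\ast M(\i\m+\rho)\mathcal L_\rho$ acts as multiplication by $M(\i\cdot+\rho)$, and to turn analyticity of $M$ into a support statement via a vector-valued Paley--Wiener theorem. First I would record that $M(\partial_{t,\rho})$ commutes with time translations $\tau_a$, because under $\mathcal L_\rho$ the operator $\tau_a$ becomes multiplication by the scalar function $\e^{-(\i\cdot+\rho)a}$, which commutes with multiplication by $M(\i\cdot+\rho)$. Hence for the causality statement it suffices to treat $a=0$.

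So let $f\in L_{2,\rho}(\R;H)$ with $\spt f\subseteq\R_{\geq0}$. Since $\e^{-\mu s}\leq\e^{-\rho s}$ for all $s\geq0$ and $\mu\geq\rho$, we have $f\in L_{2,\mu}(\R;H)$ for every $\mu\geq\rho$, and
\[
\hat{f}(z)\coloneqq\frac{1}{\sqrt{2\pi}}\int_0^\infty\e^{-zs}f(s)\d s
\]
defines a holomorphic $H$-valued function on $\C_{\Re>\rho}$ with $\hat{f}(\i t+\mu)=(\mathcal L_\mu f)(t)$ and $\sup_{\mu>\rho}\|\hat{f}(\i\cdot+\mu)\|_{L_2(\R;H)}=\sup_{\mu>\rho}\|f\|_{L_{2,\mu}}\leq\|f\|_{L_{2,\rho}}<\infty$; that is, $\hat{f}$ belongs to the $H$-valued Hardy space of the half-plane $\C_{\Re>\rho}$. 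The relevant (converse) Paley--Wiener statement is that, conversely, any $g\in L_{2,\rho}(\R;H)$ whose transform $\mathcal L_\rho g$ extends to such a Hardy-space function is supported in $\R_{\geq0}$. Applying this to $g=M(\partial_{t,\rho})f$: because $\rho>\rho_0$ we have $\C_{\Re>\rho}\subseteq\C_{\Re>\rho_0}$, so $z\mapsto M(z)\hat{f}(z)$ is holomorphic on $\C_{\Re>\rho}$, and $\|M(\i\cdot+\mu)\hat{f}(\i\cdot+\mu)\|_{L_2}\leq\|M\|_\infty\|f\|_{L_{2,\mu}}$ keeps it in the Hardy space; its boundary values on $\Re z=\rho$ are exactly $\mathcal L_\rho\bigl(M(\partial_{t,\rho})f\bigr)$, so the Paley--Wiener statement yields $\spt M(\partial_{t,\rho})f\subseteq\R_{\geq0}$, and the translation argument gives the claim for general $a$.

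For the independence of $\rho$, fix $\rho_0<\rho<\mu$. I would first reduce to $f\in C_c^\infty(\R;H)$: these are dense in $L_{2,\rho}(\R;H)\cap L_{2,\mu}(\R;H)$ with simultaneous approximation in both norms (truncate to $[-n,n]$, where the two exponential weights are comparable, then mollify), and both $M(\partial_{t,\rho})$ and $M(\partial_{t,\mu})$ are bounded. For such $f$ the function $\hat{f}$ extends to an entire function decaying rapidly on vertical strips, so $G(z)\coloneqq M(z)\hat{f}(z)$ is holomorphic on $\C_{\Re>\rho_0}$ with $G(\i\cdot+\nu)\in L_1(\R;H)\cap L_2(\R;H)$ for every $\nu>\rho_0$; using $\mathcal L_\nu^\ast=\e^{\nu\cdot}\mathcal L_0^\ast$ and Fourier inversion one obtains the pointwise representation
\[
\bigl(M(\partial_{t,\nu})f\bigr)(t)=\frac{1}{\sqrt{2\pi}\,\i}\int_{\Re z=\nu}\e^{zt}G(z)\d z\qquad(t\in\R).
\]
Since $G$ is holomorphic in the strip $\rho\leq\Re z\leq\mu$ and decays fast enough as $\operatorname{Im}z\to\pm\infty$, Cauchy's theorem allows shifting the contour from $\Re z=\rho$ to $\Re z=\mu$, so the two integrals agree for every $t$; hence $M(\partial_{t,\rho})f=M(\partial_{t,\mu})f$ on $C_c^\infty(\R;H)$, and density finishes the proof.

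The analytic heart of the argument is routine once these tools are in place; the steps I expect to require the most care are making the vector-valued Paley--Wiener / Hardy-space characterisation precise (existence of $L_2$-boundary values, identification of $\mathcal L_\nu^\ast v$ for $v\in L_1(\R;H)\cap L_2(\R;H)$ with the pointwise inverse-transform integral, and the uniform bounds along the family of vertical lines), and, for the $\rho$-independence, justifying the contour shift by controlling the horizontal segments together with the simultaneous density of $C_c^\infty(\R;H)$ in $L_{2,\rho}(\R;H)\cap L_{2,\mu}(\R;H)$.
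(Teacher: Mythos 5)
Your proposal is correct and follows essentially the route the paper itself indicates for this cited result: causality via the (vector-valued) Paley--Wiener/Hardy-space characterisation of transforms of functions supported on $\R_{\geq0}$, and independence of $\rho$ via Cauchy's theorem applied to a contour shift on a dense set of nice functions. The points you flag (boundary-value identification in the Hardy space, simultaneous density in the two weighted spaces, decay justifying the contour shift) are exactly the technical details to be filled in, and they go through as you describe.
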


\begin{rem}
\begin{enumerate}[(a)]

\item The proof of causality is based on a theorem by Paley and Wiener,
which charcterises the functions in $L_{2}(\R_{\geq0};H)$ in terms
of their Laplace transform (see \cite{Paley_Wiener} or \cite[19.2 Theorem ]{rudin1987real}).
The independence of $\rho$ is a simple application of Cauchy's Theorem
for analytic functions.

\item It is noteworthy that causal, translation-invariant and bounded
operators are always of the form $M(\partial_{t,\rho})$ for some
analytic and bounded mapping defined on a right half plane (see \cite{Foures1955,Weiss1991}). 

\end{enumerate}
\end{rem}

Finally, we are in the position to define well-posed evolutionary
problems.
\begin{defn*}
Let $\rho_{0}\in\R$ and $H$ a Hilbert space. Moreover, let $M:\C_{\Re>\rho_{0}}\to L(H)$
be analytic and bounded and $A:\dom(A)\subseteq H\to H$ densely defined,
closed and linear. Then we call an equation of the form 
\[
(\partial_{t,\rho}M(\partial_{t,\rho})+A)u=f
\]
the \emph{evolutionary equation associated with $(M,A).$ }The problem
is called \emph{well-posed} if there is $\rho_{1}>\rho_{0}$ such
that $zM(z)+A$ is boundedly invertible for each $z\in\C_{\Re\geq\rho_{1}}$
and 
\[
\C_{\Re\geq\rho_{1}}\ni z\mapsto(zM(z)+A)^{-1}
\]
is bounded. Moreover we set $s_{0}(M,A)$ as the infimum over all
such $\rho_{1}>\rho_{0}$. 
\end{defn*}
\begin{thm}
\label{thm:well_posed}Let $\rho_{0}\in\R$ and $H$ a Hilbert space.
Moreover, let $M:\C_{\Re>\rho_{0}}\to L(H)$ be analytic and bounded
and $A:\dom(A)\subseteq H\to H$ densely defined closed and linear.
We assume that the evolutionary equation associated with $(M,A)$
is well-posed. Then $\overline{\partial_{t,\rho}M(\partial_{t,\rho})+A}$
is boundedly invertible as an operator on $L_{2,\rho}(\R;H)$ for
each $\rho>s_{0}(M,A)$. Moreover, the inverse 
\[
S_{\rho}\coloneqq\left(\overline{\partial_{t,\rho}M(\partial_{t,\rho})+A}\right)^{-1}
\]
is causal and independent of the choice of $\rho>s_{0}(M,A)$ in the
sense of \prettyref{prop:material law}. %
\begin{comment}
and 
\[
\sup_{\rho\geq\rho_{1}}\|S_{\rho}\|<\infty
\]
for each $\rho_{1}>s_{0}(M,A).$ 
\end{comment}
\end{thm}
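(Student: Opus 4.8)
The plan is to conjugate the whole equation by the Fourier--Laplace transform $\mathcal{L}_{\rho}$, which turns it into a pointwise (in the frequency variable) multiplication operator, to invert that operator, and then to read off causality and $\rho$-independence from \prettyref{prop:material law}. So fix $\rho>s_{0}(M,A)$. Since $s_{0}(M,A)$ is the infimum of the admissible parameters, I can choose $\rho_{1}\in(\rho_{0},\rho)$ such that $zM(z)+A$ is boundedly invertible for every $z\in\C_{\Re\geq\rho_{1}}$ with $C\coloneqq\sup_{\Re z\geq\rho_{1}}\|(zM(z)+A)^{-1}\|<\infty$. Writing $\tilde M(z)\coloneqq(zM(z)+A)^{-1}$, the map $\tilde M$ is, besides being bounded, analytic on $\C_{\Re>\rho_{1}}$, being the inverse of the analytic family $z\mapsto zM(z)+A$ of closed operators sharing the domain $\dom(A)$ (a bounded analytic perturbation of $A$).

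Next I would pass to the Fourier--Laplace side. By the representation $\partial_{t,\rho}=\mathcal{L}_{\rho}^{\ast}(\i\m+\rho)\mathcal{L}_{\rho}$ and the definition of the functional calculus, $\partial_{t,\rho}M(\partial_{t,\rho})=\mathcal{L}_{\rho}^{\ast}(\i\m+\rho)M(\i\m+\rho)\mathcal{L}_{\rho}$; and $A$, lifted to act pointwise in time on $L_{2,\rho}(\R;H)$, commutes with $\mathcal{L}_{\rho}$ (which acts only on the time variable) -- this is standard and includes the natural identification of domains. Hence $\partial_{t,\rho}M(\partial_{t,\rho})+A$ is unitarily equivalent via $\mathcal{L}_{\rho}$ to the operator $\mathcal{M}$ on $L_{2}(\R;H)$ given by $(\mathcal{M}g)(t)\coloneqq B(t)g(t)$, where $B(t)\coloneqq(\i t+\rho)M(\i t+\rho)+A$ with $\dom(B(t))=\dom(A)$, defined on $\dom((\i\m+\rho)M(\i\m+\rho))\cap\dom(A)$. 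Since closure commutes with unitary conjugation, it suffices to invert $\overline{\mathcal{M}}$. Now $\rho>\rho_{1}$ forces each $B(t)$ to be boundedly invertible with $\|B(t)^{-1}\|\leq C$, and $t\mapsto B(t)^{-1}$ is continuous (a restriction of the analytic map $\tilde M$), so $N\colon L_{2}(\R;H)\to L_{2}(\R;H)$, $(Ng)(t)\coloneqq B(t)^{-1}g(t)$, is a well-defined bounded operator with $\|N\|\leq C$.

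I would then verify $\overline{\mathcal{M}}^{-1}=N$ in two steps. For $\overline{\mathcal{M}}N=\mathrm{id}$: given $g\in L_{2}(\R;H)$ set $v\coloneqq Ng$ and $v_{n}\coloneqq\chi_{[-n,n]}v$. Each $v_{n}$ lies in $\dom(\mathcal{M})$ -- indeed $v(t)=B(t)^{-1}g(t)\in\dom(A)$, hence $v_{n}(t)\in\dom(A)$; on $[-n,n]$ the coefficient $(\i t+\rho)M(\i t+\rho)$ is bounded, so $(\i\m+\rho)M(\i\m+\rho)v_{n}\in L_{2}(\R;H)$; and $Av_{n}(t)=\chi_{[-n,n]}(t)\bigl(g(t)-(\i t+\rho)M(\i t+\rho)v(t)\bigr)$ (using $B(t)v(t)=g(t)$), which is again in $L_{2}(\R;H)$; moreover $\mathcal{M}v_{n}=\chi_{[-n,n]}g$. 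Letting $n\to\infty$ gives $v_{n}\to v$ and $\mathcal{M}v_{n}\to g$, whence $v\in\dom(\overline{\mathcal{M}})$ and $\overline{\mathcal{M}}v=g$. For injectivity and $N\overline{\mathcal{M}}=\mathrm{id}$ on $\dom(\overline{\mathcal{M}})$: any $w\in\dom(\mathcal{M})$ satisfies $N\mathcal{M}w=w$ by applying $B(t)^{-1}$ pointwise, and boundedness of $N$ lets this pass to the closure. Transferring back, $\overline{\partial_{t,\rho}M(\partial_{t,\rho})+A}$ is boundedly invertible, so that $S_{\rho}=\mathcal{L}_{\rho}^{\ast}N\mathcal{L}_{\rho}$, which is bounded.

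Finally, by the very definition of the functional calculus $S_{\rho}=\mathcal{L}_{\rho}^{\ast}N\mathcal{L}_{\rho}=\tilde M(\partial_{t,\rho})$, and $\tilde M$ is analytic and bounded on $\C_{\Re>\rho_{1}}$; so \prettyref{prop:material law}, applied with $\tilde M$ in place of $M$ and $\rho_{1}$ in place of $\rho_{0}$, gives at once that $S_{\mu}=\tilde M(\partial_{t,\mu})$ is causal for every $\mu>\rho_{1}$ and that $S_{\mu}$ and $S_{\mu'}$ agree on $L_{2,\mu}(\R;H)\cap L_{2,\mu'}(\R;H)$ for $\mu,\mu'>\rho_{1}$. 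Since every $\mu>s_{0}(M,A)$ admits such a $\rho_{1}<\mu$, this yields causality of $S_{\rho}$ for all $\rho>s_{0}(M,A)$ and independence of the choice of $\rho$. I expect the delicate point to be the passage to the closure: one must check that $N$ inverts $\overline{\mathcal{M}}$ and not merely $\mathcal{M}$ (whose domain may be strictly smaller), which is exactly what the truncation argument together with $\|N\|<\infty$ secures; the other point needing care is the (standard) bookkeeping that $A$ commutes with $\mathcal{L}_{\rho}$ on the nose, domains included.
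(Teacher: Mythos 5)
Your proposal is correct and takes essentially the same route as the paper: conjugation by $\mathcal{L}_{\rho}$ to the multiplication operator with symbol $(\i t+\rho)M(\i t+\rho)+A$, followed by \prettyref{prop:material law} applied to $N(z)=(zM(z)+A)^{-1}$ for causality and independence of $\rho$. The only difference is that where the paper cites an external lemma for the bounded invertibility of the closure of this multiplication operator, you prove it directly via the truncation argument $v_{n}=\chi_{[-n,n]}Ng$, which correctly supplies that cited step rather than constituting a different approach.
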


\begin{proof}
We note that the operator $\overline{\partial_{t,\rho}M(\partial_{t,\rho})+A}$
for $\rho>s_{0}(M,A)$ is unitarily equivalent to the multiplication
operator on $L_{2}(\R;H)$ associated with the operator-valued function
\[
F(t)\coloneqq\left(\i t+\rho\right)M(\i t+\rho)+A,
\]
see \cite[Lemma 2.2]{Trostorff2015_secondorder}, which is boundedly
invertible by assumption. The causality and independence of $\rho$
are an immediate consequence of \prettyref{prop:material law}, since
$S_{\rho}=N(\partial_{t,\rho})$ for the analytic and bounded function
$N(z)\coloneqq(zM(z)+A)^{-1}$ for $z\in\C_{\Re>s_{0}(M,A)}$. 
\end{proof}

\section{Extrapolation spaces\label{sec:Extrapolation-spaces}}

In this section we recall the notion of extrapolation spaces associated
with a boundedly invertible operator on some Hilbert space $H$. We
refer to \cite[Section 2.1]{Picard_McGhee} for the proof of the results
presented here. 
\begin{defn*}
Let $C:\dom(C)\subseteq H\to H$ be a densely defined, closed, linear
and boundedly invertible operator on some Hilbert space $H$. We define
the Hilbert space 
\[
H^{1}(C)\coloneqq\dom(C)
\]
equipped with the inner product 
\[
\langle x,y\rangle_{H^{1}(C)}\coloneqq\langle Cx,Cy\rangle\quad(x,y\in\dom(C)).
\]
Moreover, we set 
\[
H^{-1}(C)\coloneqq H^{1}(C^{\ast})',
\]
the dual space of $H^{1}(C^{\ast})$. 
\end{defn*}
\begin{rem}
Another way to introduce the space $H^{-1}(C)$ is taking the completion
of $H$ with respect to the norm 
\[
x\mapsto\|C^{-1}x\|.
\]
\end{rem}

\begin{prop}[{\cite[Theorem 2.1.6]{Picard_McGhee}}]
 Let $C:\dom(C)\subseteq H\to H$ be a densely defined, closed, linear
and boundedly invertible operator on some Hilbert space $H$. Then
$H^{1}(C)\hookrightarrow H\hookrightarrow H^{-1}(C)$ with dense and
continuous embeddings. Here, the second embedding is given by 
\[
H\to H^{-1}(C),\;x\mapsto\left(\dom(C^{\ast})\ni y\mapsto\langle x,y\rangle\right).
\]
Moreover, the operator 
\[
C:H^{1}(C)\to H
\]
is unitary and 
\[
C:\dom(C)\subseteq H\to H^{-1}(C)
\]
possesses a unitary extension, which will again be denoted by $C$.
\end{prop}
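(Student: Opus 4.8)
The plan is to handle the three claims --- the embedding chain, unitarity of $C\colon H^{1}(C)\to H$, and the unitary extension $C\colon\dom(C)\to H^{-1}(C)$ --- in turn, using throughout that $C^{*}$ is again densely defined, closed, linear and boundedly invertible, with $(C^{*})^{-1}=(C^{-1})^{*}$; in particular $H^{1}(C^{*})$, and hence $H^{-1}(C)=H^{1}(C^{*})'$, are well-defined Hilbert spaces.

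For the top of the chain I would observe that $C^{-1}\colon H\to\dom(C)=H^{1}(C)$ is, by the very definition of $\langle\cdot,\cdot\rangle_{H^{1}(C)}$, a linear bijection onto $H^{1}(C)$ with $\|C^{-1}z\|_{H^{1}(C)}=\|z\|_{H}$; hence $H^{1}(C)$ is complete (being isometrically isomorphic to $H$) and $C\colon H^{1}(C)\to H$ is a surjective linear isometry, i.e.\ unitary. Continuity and density of $H^{1}(C)\hookrightarrow H$ are then immediate: $\|x\|_{H}=\|C^{-1}Cx\|_{H}\le\|C^{-1}\|\,\|x\|_{H^{1}(C)}$, and $\dom(C)$ is dense in $H$ by hypothesis.

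For $H\hookrightarrow H^{-1}(C)$ I would work with the map $\iota\colon H\to H^{1}(C^{*})'$, $\iota x=\big(\dom(C^{*})\ni y\mapsto\langle x,y\rangle\big)$, first checking that it is well defined and bounded --- $|\langle x,y\rangle|\le\|x\|\,\|(C^{*})^{-1}\|\,\|y\|_{H^{1}(C^{*})}$ --- and injective, since $\dom(C^{*})$ is dense in $H$. The heart of the matter is the norm identity $\|\iota x\|_{H^{-1}(C)}=\|C^{-1}x\|_{H}$: substituting $z=C^{*}y$ in the supremum defining the dual norm and using $\langle x,(C^{*})^{-1}z\rangle=\langle((C^{*})^{-1})^{*}x,z\rangle=\langle C^{-1}x,z\rangle$, where $((C^{*})^{-1})^{*}=C^{-1}$ because $C^{-1}$ is bounded and everywhere defined, collapses the supremum to $\|C^{-1}x\|_{H}$. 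This simultaneously gives continuity of $\iota$ and identifies $H^{-1}(C)$ isometrically with the completion of $H$ under $x\mapsto\|C^{-1}x\|_{H}$ from the Remark. For density of $\iota(H)$ in $H^{-1}(C)=H^{1}(C^{*})'$ I would invoke reflexivity of the Hilbert space $H^{1}(C^{*})$: an element of its bidual annihilating $\iota(H)$ is represented by some $v\in\dom(C^{*})$ with $\langle x,v\rangle=0$ for all $x\in H$, forcing $v=0$.

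Finally, for the unitary extension, the map $\dom(C)\ni x\mapsto\iota(Cx)\in H^{-1}(C)$ is isometric from $(\dom(C),\|\cdot\|_{H})$ since $\|\iota(Cx)\|_{H^{-1}(C)}=\|C^{-1}Cx\|_{H}=\|x\|_{H}$; as $\dom(C)$ is dense in $H$ it extends uniquely to an isometry on all of $H$, whose range is closed and contains $\iota(C\dom(C))=\iota(H)$, hence equals $H^{-1}(C)$ by the density just established, so the extension is unitary. I expect the main obstacle to be the third paragraph: the norm identity together with the density of $\iota(H)$ is exactly what ties the abstract dual-space definition to the extrapolation picture, and it is where one genuinely needs the adjoint relation $(C^{*})^{-1}=(C^{-1})^{*}$; once that is in place the remaining steps are routine.
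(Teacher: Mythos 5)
Your proof is correct: the isometry $\|Cx\|_{H}=\|x\|_{H^{1}(C)}$, the norm identity $\|\iota x\|_{H^{-1}(C)}=\|((C^{\ast})^{-1})^{\ast}x\|=\|C^{-1}x\|$ via the substitution $z=C^{\ast}y$, and the density/reflexivity argument together yield all three assertions, and they also justify the paper's Remark identifying $H^{-1}(C)$ with the completion of $H$ under $x\mapsto\|C^{-1}x\|$. The paper itself offers no proof here (it cites Theorem 2.1.6 of Picard--McGhee), and your argument is the standard one used in that reference, so there is nothing of substance to compare.
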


\begin{example}
Let $\rho\ne0$ and $H$ a Hilbert space. Then we set 
\begin{align*}
H_{\rho}^{1}(\R;H) & \coloneqq H^{1}(\partial_{t,\rho}),\\
H_{\rho}^{-1}(\R;H) & \coloneqq H^{-1}(\partial_{t,\rho}).
\end{align*}
Moreover, the Dirac distribution $\delta_{t}$ at a point $t\in\R$
belongs to $H_{\rho}^{-1}(\R;\C)$ and 
\[
\partial_{t,\rho}^{-1}\delta_{t}=\begin{cases}
\e^{2\rho t}\chi_{\R_{\geq t}} & \text{ if }\rho>0,\\
-\e^{2\rho t}\chi_{\R_{\leq t}} & \text{ if }\rho<0.
\end{cases}
\]
Indeed, for $\rho>0$ we have that 
\begin{align*}
\langle\partial_{t,\rho}\chi_{\R_{\geq t}},\varphi\rangle_{H_{\rho}^{-1}(\R;\C)\times H_{\rho}^{1}(\R;\C)} & =\int_{t}^{\infty}\left(\partial_{t,\rho}^{\ast}\varphi\right)(s)\e^{-2\rho s}\d s\\
 & =-\int_{t}^{\infty}\left(\varphi\e^{-2\rho\cdot}\right)'(s)\d s\\
 & =\varphi(t)\e^{-2\rho t}
\end{align*}
for each $\varphi\in C_{c}^{\infty}(\R;\C)$, which shows the asserted
formula. The statement for $\rho<0$ follows by the same rationale. 
\end{example}

\begin{prop}
\label{prop:extension_sol_op} Let $\rho_{0}\geq0$ and $H$ a Hilbert
space. Moreover, let $M:\C_{\Re>\rho_{0}}\to H$ be analytic and bounded
and $A:\dom(A)\subseteq H\to H$ densely defined, linear and closed
such that the evolutionary problem associated with $(M,A)$ is well-posed.
Then for each $\rho>s_{0}(M,A)$ we obtain 
\[
S_{\rho}[H_{\rho}^{1}(\R;H)]\subseteq H_{\rho}^{1}(\R;H)
\]
and 
\[
S_{\rho}:L_{2,\rho}(\R;H)\subseteq H_{\rho}^{-1}(\R;H)\to H_{\rho}^{-1}(\R;H)
\]
is bounded and thus has a unique bounded extension to the whole $H_{\rho}^{-1}(\R;H).$ 
\end{prop}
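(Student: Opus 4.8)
The plan is to use \prettyref{thm:well_posed}, which tells us that for $\rho>s_{0}(M,A)$ the solution operator is $S_{\rho}=N(\partial_{t,\rho})$ with $N(z)=(zM(z)+A)^{-1}$ analytic and bounded on $\C_{\Re>s_{0}(M,A)}$; in particular $S_{\rho}$ is a bounded function of the normal operator $\partial_{t,\rho}$ in the sense of the functional calculus introduced above. Since $\rho_{0}\geq0$ and $\rho>s_{0}(M,A)\geq\rho_{0}$, we have $\rho>0$, so by \prettyref{prop:td}(c) the operator $\partial_{t,\rho}$ is boundedly invertible and the extrapolation spaces $H_{\rho}^{1}(\R;H)=H^{1}(\partial_{t,\rho})$ and $H_{\rho}^{-1}(\R;H)=H^{-1}(\partial_{t,\rho})$ are defined. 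Recall that $\|u\|_{H_{\rho}^{1}(\R;H)}=\|\partial_{t,\rho}u\|_{\rho}$ for $u\in\dom(\partial_{t,\rho})$ by definition, and $\|u\|_{H_{\rho}^{-1}(\R;H)}=\|\partial_{t,\rho}^{-1}u\|_{\rho}$ for $u\in L_{2,\rho}(\R;H)$ by the description of $H^{-1}(C)$ as a completion.

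The technical core is the commutation of $S_{\rho}$ with powers of $\partial_{t,\rho}$. Writing $S_{\rho}=\mathcal{L}_{\rho}^{\ast}N(\i\m+\rho)\mathcal{L}_{\rho}$ and $\partial_{t,\rho}^{-1}=\mathcal{L}_{\rho}^{\ast}(\i\m+\rho)^{-1}\mathcal{L}_{\rho}$, both operators are conjugates under $\mathcal{L}_{\rho}$ of multiplication operators --- the first by an $L(H)$-valued, the second by a scalar-valued function --- and these multiplication operators commute pointwise; hence $S_{\rho}\partial_{t,\rho}^{-1}=\partial_{t,\rho}^{-1}S_{\rho}$ on $L_{2,\rho}(\R;H)$. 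For the unbounded operator $\partial_{t,\rho}$ one gets the inclusion $S_{\rho}\partial_{t,\rho}\subseteq\partial_{t,\rho}S_{\rho}$: if $u\in\dom(\partial_{t,\rho})$ then $\mathcal{L}_{\rho}u\in\dom(\m)$, and since $z\mapsto N(z)$ is bounded on the line $\i\R+\rho$, the function $t\mapsto t\,N(\i t+\rho)(\mathcal{L}_{\rho}u)(t)=N(\i t+\rho)\big(t(\mathcal{L}_{\rho}u)(t)\big)$ lies in $L_{2}(\R;H)$, so $N(\i\m+\rho)\mathcal{L}_{\rho}u\in\dom(\m)$, i.e.\ $S_{\rho}u\in\dom(\partial_{t,\rho})$ with $\partial_{t,\rho}S_{\rho}u=S_{\rho}\partial_{t,\rho}u$.

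Given this, both claims follow by transporting the bound $\|S_{\rho}\|_{L(L_{2,\rho}(\R;H))}<\infty$ (from \prettyref{thm:well_posed}) through the extrapolation norms. For the first claim, if $u\in H_{\rho}^{1}(\R;H)=\dom(\partial_{t,\rho})$ then $S_{\rho}u\in\dom(\partial_{t,\rho})=H_{\rho}^{1}(\R;H)$ and $\|S_{\rho}u\|_{H_{\rho}^{1}(\R;H)}=\|S_{\rho}\partial_{t,\rho}u\|_{\rho}\leq\|S_{\rho}\|\,\|u\|_{H_{\rho}^{1}(\R;H)}$, so $S_{\rho}$ maps $H_{\rho}^{1}(\R;H)$ boundedly into itself. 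For the second claim, if $u\in L_{2,\rho}(\R;H)$, viewed in $H_{\rho}^{-1}(\R;H)$ through the canonical embedding, then $S_{\rho}u\in L_{2,\rho}(\R;H)$ and $\|S_{\rho}u\|_{H_{\rho}^{-1}(\R;H)}=\|\partial_{t,\rho}^{-1}S_{\rho}u\|_{\rho}=\|S_{\rho}\partial_{t,\rho}^{-1}u\|_{\rho}\leq\|S_{\rho}\|\,\|u\|_{H_{\rho}^{-1}(\R;H)}$; since $L_{2,\rho}(\R;H)$ is dense in $H_{\rho}^{-1}(\R;H)$ by the proposition on extrapolation spaces recalled above, $S_{\rho}$ extends uniquely to a bounded operator on all of $H_{\rho}^{-1}(\R;H)$.

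I do not expect a genuine obstacle here: the whole argument is bookkeeping with the Fourier--Laplace functional calculus together with the identification of the extrapolation norms with $\|\partial_{t,\rho}^{\pm1}\cdot\|_{\rho}$. The one point that deserves care is the domain statement in $S_{\rho}\partial_{t,\rho}\subseteq\partial_{t,\rho}S_{\rho}$, i.e.\ that $S_{\rho}$ leaves $\dom(\partial_{t,\rho})$ invariant; this is exactly where the uniform boundedness of $z\mapsto N(z)$ on a right half-plane --- part of the well-posedness hypothesis --- enters.
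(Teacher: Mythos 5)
Your proposal is correct and rests on exactly the same idea as the paper's (one-line) proof, namely the commutation of the solution operator with $\partial_{t,\rho}$ --- the paper records this as $\left(\partial_{t,\rho}M(\partial_{t,\rho})+A\right)\partial_{t,\rho}\subseteq\partial_{t,\rho}\left(\partial_{t,\rho}M(\partial_{t,\rho})+A\right)$, while you verify the equivalent commutation $S_{\rho}\partial_{t,\rho}\subseteq\partial_{t,\rho}S_{\rho}$ and $S_{\rho}\partial_{t,\rho}^{-1}=\partial_{t,\rho}^{-1}S_{\rho}$ through the Fourier--Laplace multiplication-operator representation. The remaining bookkeeping with the extrapolation norms is carried out correctly, so your write-up is simply a fleshed-out version of the paper's argument.
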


\begin{proof}
The assertion follows immediately by realising that 
\[
\left(\partial_{t,\rho}M(\partial_{t,\rho})+A\right)\partial_{t,\rho}\subseteq\partial_{t,\rho}\left(\partial_{t,\rho}M(\partial_{t,\rho})+A\right).\tag*{\qedhere}
\]
\end{proof}
We recall that for a densely defined, closed, linear operator $A:\dom(A)\subseteq H_{0}\to H_{1}$
between two Hilbert spaces $H_{0}$ and $H_{1}$, the operators $A^{\ast}A$
and $AA^{\ast}$ are selfadjoint and positive. Then the moduli of
$A$ and $A^{\ast}$ are defined by 
\[
|A|\coloneqq\sqrt{A^{\ast}A},\quad|A^{\ast}|\coloneqq\sqrt{AA^{\ast}}
\]
and are selfadjoint positive operators, too (see e.g. \cite[Theorem 7.20]{Weidmann}). 
\begin{prop}[{\cite[Lemma 2.1.16]{Picard_McGhee}}]
Let $H_{0},H_{1}$ be Hilbert spaces and $A:\dom(A)\subseteq H_{0}\to H_{1}$
densely defined, closed and linear. Then 
\[
A:\dom(A)\subseteq H_{0}\to H^{-1}(|A^{\ast}|+1)
\]
is bounded and hence, possesses a bounded extension to $H_{0}$. 
\end{prop}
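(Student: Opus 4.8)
The plan is to peel off the definition of the extrapolation space, reduce the assertion to a concrete operator-norm estimate, and then settle that estimate via the polar decomposition of $A$.

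First, since $|A^{\ast}|$ is selfadjoint and positive, $C\coloneqq|A^{\ast}|+1$ is selfadjoint with $C\geq1$, hence densely defined, closed, boundedly invertible, and $C^{\ast}=C$. Thus $H^{-1}(|A^{\ast}|+1)=H^{1}(C)'$, and by the Remark following the definition of the extrapolation spaces the embedding $H_{1}\hookrightarrow H^{-1}(|A^{\ast}|+1)$ is isometric when the target carries the norm $x\mapsto\|(|A^{\ast}|+1)^{-1}x\|_{H_{1}}$. Consequently, for $u\in\dom(A)$ we have $\|Au\|_{H^{-1}(|A^{\ast}|+1)}=\|(|A^{\ast}|+1)^{-1}Au\|_{H_{1}}$, and it suffices to find a constant $c\geq0$ with $\|(|A^{\ast}|+1)^{-1}Au\|_{H_{1}}\leq c\|u\|_{H_{0}}$ for all $u\in\dom(A)$; boundedness on the dense subspace $\dom(A)$ then yields the claimed unique bounded extension to $H_{0}$.

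Second, I would invoke the polar decomposition $A=U|A|$ with $|A|=\sqrt{A^{\ast}A}$, $\dom(|A|)=\dom(A)$, and $U$ a partial isometry, together with the identity $|A^{\ast}|=U|A|U^{\ast}$; this yields the ``reverse'' polar decomposition $A=|A^{\ast}|U$ on $\dom(A)$, so that $Uu\in\dom(|A^{\ast}|)$ and $|A^{\ast}|Uu=Au$ for $u\in\dom(A)$. Then
\[
(|A^{\ast}|+1)^{-1}Au=(|A^{\ast}|+1)^{-1}|A^{\ast}|Uu=\bigl(1-(|A^{\ast}|+1)^{-1}\bigr)Uu,
\]
and since $1-(|A^{\ast}|+1)^{-1}$ has operator norm $\leq1$ (by the functional calculus of the selfadjoint operator $|A^{\ast}|$, as $\sup_{\lambda\geq0}\frac{\lambda}{\lambda+1}\leq1$) and $\|U\|\leq1$, we obtain $\|(|A^{\ast}|+1)^{-1}Au\|_{H_{1}}\leq\|u\|_{H_{0}}$, i.e.\ $c=1$ works.

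I expect the only delicate point to be the bookkeeping in the reverse polar decomposition: verifying that $U^{\ast}U$ restricts to the identity on $\ran|A|$ so that $U|A|=|A^{\ast}|U$ holds with matching domains on $\dom(A)$. If one prefers to bypass this, the same estimate follows computationally: using the spectral calculus of $|A^{\ast}|$ and the elementary inequality $(\lambda+1)^{2}\geq\lambda^{2}+1$ for $\lambda\geq0$ one gets $\|(|A^{\ast}|+1)^{-1}Au\|\leq\|(AA^{\ast}+1)^{-1/2}Au\|$; setting $w\coloneqq(AA^{\ast}+1)^{-1}Au\in\dom(AA^{\ast})\subseteq\dom(A^{\ast})$ and $p\coloneqq A^{\ast}w$, the relation $(AA^{\ast}+1)w=Au$ gives $\langle p,u\rangle=\langle w,Au\rangle=\|p\|^{2}+\|w\|^{2}$, whence $\|p\|^{2}\leq\langle p,u\rangle\leq\|p\|\,\|u\|$ and therefore $\|(AA^{\ast}+1)^{-1/2}Au\|^{2}=\langle p,u\rangle\leq\|u\|^{2}$. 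Either route delivers the bound with constant $1$.
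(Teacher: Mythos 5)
Your argument is correct, and in fact it proves more than stated, namely that $A:\dom(A)\subseteq H_{0}\to H^{-1}(|A^{\ast}|+1)$ is a contraction. Note, however, that the paper itself offers no proof of this proposition: it is quoted verbatim from \cite[Lemma 2.1.16]{Picard_McGhee}, so there is nothing in the text to compare your route against; the cited source argues essentially along your first route, via the polar decomposition. Both of your arguments check out. The reduction is legitimate: since $C\coloneqq|A^{\ast}|+1$ is selfadjoint with $C\geq1$, the embedding of $H_{1}$ into $H^{-1}(C)$ described in the paper satisfies $\|x\|_{H^{-1}(C)}=\|C^{-1}x\|_{H_{1}}$, so it suffices to bound $\|(|A^{\ast}|+1)^{-1}Au\|$ by $\|u\|$ on the dense subspace $\dom(A)$. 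In the first route, the identity $A=|A^{\ast}|U$ on $\dom(A)$ is the standard companion of $A=U|A|$ (using $|A^{\ast}|=U|A|U^{\ast}$ and that $U^{\ast}U$ is the projection onto $\overline{\ran|A|}=(\ker|A|)^{\perp}$, which leaves $\dom(|A|)$ invariant and does not change $|A|u$), after which $\|(1-(|A^{\ast}|+1)^{-1})Uu\|\leq\|u\|$ is immediate. Your second route is the more self-contained of the two, since it needs only the selfadjointness of $AA^{\ast}$, the spectral inequality $(\lambda+1)^{-2}\leq(\lambda^{2}+1)^{-1}$, and the defining property of the adjoint; the computation $\langle p,u\rangle=\|p\|^{2}+\|w\|^{2}$ with $w=(AA^{\ast}+1)^{-1}Au$, $p=A^{\ast}w$ is exactly right and yields the bound with constant $1$. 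Either version would serve as a complete proof of the quoted lemma.
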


\section{Cut-off operators\label{sec:Cut-off-operators}}

The main goal of the present section is to extend the cut-off operators
$\chi_{\R_{\geq t}}$ and $\chi_{\R_{\leq t}}$ for some $t\in\R$
defined on $L_{2,\rho}(\R;H)$ to the extrapolation space $H_{\rho}^{-1}(\R;H).$
For doing so, we start with the following observation.
\begin{lem}
Let $\rho>0,t\in\R$ and $H$ be a Hilbert space. We define the operators
\begin{align*}
\chi_{\R_{\geq t}}(\m) & :L_{2,\rho}(\R;H)\to L_{2,\rho}(\R;H),\quad f\mapsto\left(s\mapsto\chi_{\R_{\geq t}}(s)f(s)\right),\\
\chi_{\R_{\leq t}}(\m) & :L_{2,\rho}(\R;H)\to L_{2,\rho}(\R;H),\quad f\mapsto\left(s\mapsto\chi_{\R_{\leq t}}(s)f(s)\right).
\end{align*}
Then for $f\in L_{2,\rho}(\R;H)$ we have\footnote{Note that $\partial_{t,\rho}^{-1}f$ has a continuous representer
by \prettyref{prop:td} (e).} 
\begin{align*}
\chi_{\R_{\geq t}}(\m)f & =\partial_{t,\rho}\chi_{\R_{\geq t}}(\m)\partial_{t,\rho}^{-1}f-\e^{-2\rho t}\left(\partial_{t,\rho}^{-1}f\right)(t+)\delta_{t},\\
\chi_{\R_{\leq t}}(\m)f & =\partial_{t,\rho}\chi_{\R_{\leq t}}(\m)\partial_{t,\rho}^{-1}f+\e^{-2\rho t}\left(\partial_{t,\rho}^{-1}f\right)(t-)\delta_{t}.
\end{align*}
\end{lem}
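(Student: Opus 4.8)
The plan is to verify both identities by testing against smooth functions and using the integration-by-parts computation already carried out in the example preceding the lemma. I will treat the case $\rho>0$; the case $\rho<0$ (or rather, the second identity) follows by the same reasoning with the roles of the half-lines reversed, as was noted for the Dirac distribution.

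First I would fix $f\in L_{2,\rho}(\R;H)$ and set $g\coloneqq\partial_{t,\rho}^{-1}f$, which by \prettyref{prop:td}~(c) is given by $g(s)=\int_{-\infty}^{s}f(r)\d r$ and by \prettyref{prop:td}~(e) has a continuous representer; in particular $g(t+)=g(t-)=g(t)$ is well-defined. The point is to compute $\partial_{t,\rho}\bigl(\chi_{\R_{\geq t}}(\m)g\bigr)$ as an element of $H_{\rho}^{-1}(\R;H)$. The function $\chi_{\R_{\geq t}}(\m)g$ equals $g$ on $\R_{\geq t}$ and vanishes on $\R_{<t}$, so its distributional derivative is the jump term $g(t)\delta_{t}$ plus the pointwise derivative $\chi_{\R_{\geq t}}(\m)g'=\chi_{\R_{\geq t}}(\m)f$. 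To make this rigorous in the duality $H_{\rho}^{-1}(\R;H)\times H_{\rho}^{1}(\R;H)$, I would pair with $\varphi\in C_{c}^{\infty}(\R;H)$ (a core for $\partial_{t,\rho}^{\ast}$ by \prettyref{prop:td}~(a),(d)) and compute, using $\partial_{t,\rho}^{\ast}=-\partial_{t,\rho}+2\rho$,
\[
\langle\partial_{t,\rho}\chi_{\R_{\geq t}}(\m)g,\varphi\rangle_{\rho}=\langle\chi_{\R_{\geq t}}(\m)g,\partial_{t,\rho}^{\ast}\varphi\rangle_{\rho}=\int_{t}^{\infty}\langle g(s),(-\varphi'+2\rho\varphi)(s)\rangle\e^{-2\rho s}\d s.
\]
Writing the integrand as $-\langle g(s),\bigl(\varphi\e^{-2\rho\cdot}\bigr)'(s)\rangle$ and integrating by parts on $[t,\infty)$ (the boundary term at $+\infty$ vanishing since $\varphi$ has compact support), I get $\langle g(t),\varphi(t)\rangle\e^{-2\rho t}+\int_{t}^{\infty}\langle g'(s),\varphi(s)\rangle\e^{-2\rho s}\d s$. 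Recognising $g'=f$ on $\R_{\geq t}$ and the first term as $\e^{-2\rho t}\langle g(t)\delta_{t},\varphi\rangle$, exactly in the normalisation of the example, yields
\[
\partial_{t,\rho}\chi_{\R_{\geq t}}(\m)\partial_{t,\rho}^{-1}f=\chi_{\R_{\geq t}}(\m)f+\e^{-2\rho t}\bigl(\partial_{t,\rho}^{-1}f\bigr)(t+)\delta_{t},
\]
which rearranges to the claimed formula. The second identity is obtained identically, integrating by parts on $(-\infty,t]$ instead, where the boundary contribution appears with the opposite sign.

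The main obstacle is purely a matter of bookkeeping rather than genuine difficulty: one must make sure every term lies in the space in which the equality is asserted (the right-hand sides are \emph{a priori} only in $H_{\rho}^{-1}(\R;H)$, not in $L_{2,\rho}(\R;H)$, because of the $\delta_{t}$ summand), that $\chi_{\R_{\geq t}}(\m)\partial_{t,\rho}^{-1}f$ genuinely lies in $\dom(\partial_{t,\rho})$ as a map into $H_{\rho}^{-1}(\R;H)$ so that the extended operator $\partial_{t,\rho}$ from \prettyref{sec:Extrapolation-spaces} applies to it, and that the point evaluations $(\partial_{t,\rho}^{-1}f)(t\pm)$ are legitimate thanks to the Sobolev embedding \prettyref{prop:td}~(e). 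Since the continuous representer of $\partial_{t,\rho}^{-1}f$ has no jump at $t$, the notations $(\partial_{t,\rho}^{-1}f)(t+)$ and $(\partial_{t,\rho}^{-1}f)(t-)$ both denote this common value here, but the one-sided notation is kept with an eye toward the later extension of these cut-off operators to $H_{\rho}^{-1}(\R;H)$, where the argument will be applied to functions whose primitives do jump.
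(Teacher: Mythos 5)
Your proposal is correct and follows essentially the same route as the paper: pair $\partial_{t,\rho}\chi_{\R_{\geq t}}(\m)\partial_{t,\rho}^{-1}f$ against $C_{c}^{\infty}(\R;H)$ test functions, use $\partial_{t,\rho}^{\ast}=-\partial_{t,\rho}+2\rho$, integrate by parts on the half-line to produce the jump term $\e^{-2\rho t}\bigl(\partial_{t,\rho}^{-1}f\bigr)(t+)\delta_{t}$, and conclude by density of $C_{c}^{\infty}$ in $H^{1}(\partial_{t,\rho}^{\ast})$. The passing remark about ``the case $\rho<0$'' is a harmless slip (the lemma only concerns $\rho>0$), and your actual treatment of the second identity, integrating by parts on $(-\infty,t]$ with the opposite boundary sign, is exactly what the paper leaves to the same rationale.
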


\begin{proof}
We just prove the formula for $\chi_{\R_{\geq t}}(\m)$. So, let $f\in L_{2,\rho}(\R;H)$
and set $F\coloneqq\partial_{t,\rho}^{-1}f.$ We recall from \prettyref{prop:td}
(c) that 
\[
F(t)=\int_{-\infty}^{t}f(s)\d s\quad(t\in\R).
\]
For $g\in C_{c}^{\infty}(\R;H)$ we compute 
\begin{align*}
 & \langle\partial_{t,\rho}\chi_{\R_{\geq t}}(\m)\partial_{t,\rho}^{-1}f,g\rangle_{H^{-1}(\partial_{t,\rho})\times H^{1}(\partial_{t,\rho}^{\ast})}\\
 & =\langle\chi_{\R_{\geq t}}(\m)\partial_{t,\rho}^{-1}f,\partial_{t,\rho}^{\ast}g\rangle_{L_{2,\rho}(\R;H)}\\
 & =\int_{t}^{\infty}\langle F(s),-g'(s)+2\rho g(s)\rangle\e^{-2\rho s}\d s\\
 & =\int_{t}^{\infty}\langle f(s),g(s)\rangle\e^{-2\rho s}\d s+F(t+)g(t)\e^{-2\rho t}\\
 & =\langle\chi_{\R_{\geq t}}(\m)f,g\rangle_{L_{2,\rho}(\R;H)}+\langle\e^{-2\rho t}F(t+)\delta_{t},g\rangle_{H^{-1}(\partial_{t,\rho})\times H^{1}(\partial_{t,\rho}^{\ast}).}
\end{align*}
Since $C_{c}^{\infty}(\R;H)$ is dense in $H^{1}(\partial_{t,\rho}^{\ast})$
by \prettyref{prop:td} (a), we derive the asserted formula.
\end{proof}
The latter representation of the cut-off operators on $L_{2,\rho}(\R;H)$
leads to the following definition on $H_{\rho}^{-1}(\R;H).$ 
\begin{defn*}
Let $\rho>0$ and $H$ a Hilbert space. For $t\in\R$ we define the
operators 
\begin{align*}
P_{t} & :\dom(P_{t})\subseteq H_{\rho}^{-1}(\R;H)\to H_{\rho}^{-1}(\R;H),\\
Q_{t} & :\dom(Q_{t})\subseteq H_{\rho}^{-1}(\R;H)\to H_{\rho}^{-1}(\R;H),
\end{align*}
with the domains 
\begin{align*}
\dom(P_{t}) & \coloneqq\{f\in H_{\rho}^{-1}(\R;H)\,;\,(\partial_{t,\rho}^{-1}f)(t+)\text{ exists}\},\\
\dom(Q_{t}) & \coloneqq\{f\in H_{\rho}^{-1}(\R;H)\,;\,(\partial_{t,\rho}^{-1}f)(t-)\text{ exists}\}
\end{align*}
by 
\[
P_{t}f\coloneqq\partial_{t,\rho}\chi_{\R_{\geq t}}(\m)\partial_{t,\rho}^{-1}f-\e^{-2\rho t}\left(\partial_{t,\rho}^{-1}f\right)(t+)\delta_{t}\quad(f\in\dom(P_{t}))
\]
and 
\[
Q_{t}f\coloneqq\partial_{t,\rho}\chi_{\R_{\leq t}}(\m)\partial_{t,\rho}^{-1}f+\e^{-2\rho t}\left(\partial_{t,\rho}^{-1}f\right)(t-)\delta_{t}\quad(f\in\dom(Q_{t})).
\]
\end{defn*}
\begin{rem}
For a function $f\in L_{1,\mathrm{loc}}(\R;H)$ we say that $a\coloneqq f(t+)$
for some $t\in\R$ if 
\[
\forall\varepsilon>0\:\exists\delta>0:\;\lambda\left(\{s\in[t,t+\delta[\,;\,|f(s)-a|>\varepsilon\right)=0,
\]
where $\lambda$ denotes the Lebesgue measure on $\R.$ The expression
$f(t-)$ is defined analogously. 
\end{rem}

We conclude this section by some properties of the so introduced cut-off
operators. 
\begin{prop}
\label{prop:properties_cut_off}Let $H$ be a Hilbert space, $\rho>0$,
$y\in H$ and $s,t\in\R.$ Then the following statements hold.

\begin{enumerate}[(a)]

\item $\delta_{s}y\in\dom(P_{t})$ and 
\[
P_{t}\delta_{s}y=\begin{cases}
\delta_{s}y & \text{ if }s>t,\\
0 & \text{ if }s\leq t.
\end{cases}
\]

\item For $f\in\dom(P_{t})\cap\dom(Q_{t})$ we obtain 
\[
f=P_{t}f+Q_{t}f+\e^{-2\rho t}\left(\left(\partial_{t,\rho}^{-1}f\right)(t+)-\left(\partial_{t,\rho}^{-1}f\right)(t-)\right)\delta_{t}.
\]

\item For $f\in H_{\rho}^{-1}(\R;H)$ we have $\spt f\subseteq\R_{\leq t}$
if and only if $f\in\ker(P_{t}).$ Here, the support $\spt f$ is
meant in the sense of distributions.

\end{enumerate}
\end{prop}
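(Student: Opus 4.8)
The plan is to prove the three assertions by reducing everything to the level of $L_{2,\rho}(\R;H)$ via the unitary map $\partial_{t,\rho}^{-1}\colon H_\rho^{-1}(\R;H)\to L_{2,\rho}(\R;H)$ together with the elementary identities for the multiplication operators $\chi_{\R_{\geq t}}(\m)$ and $\chi_{\R_{\leq t}}(\m)$ on $L_{2,\rho}(\R;H)$. For part (a), I would first note that $\partial_{t,\rho}^{-1}\delta_s y = \e^{2\rho s}\chi_{\R_{\geq s}}\,y$ by the computation in the Example; this representer is continuous everywhere except at $s$, so $(\partial_{t,\rho}^{-1}\delta_s y)(t+)$ exists for every $t$ (it equals $\e^{2\rho s}y$ if $s\leq t$ and $0$ if $s>t$), giving $\delta_s y\in\dom(P_t)$. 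Then apply the definition of $P_t$: $\chi_{\R_{\geq t}}(\m)\chi_{\R_{\geq s}} = \chi_{\R_{\geq \max\{s,t\}}}$, so if $s>t$ one gets $\partial_{t,\rho}\chi_{\R_{\geq s}}\e^{2\rho s}y - 0\cdot\delta_t = \delta_s y$, while if $s\leq t$ one gets $\partial_{t,\rho}\chi_{\R_{\geq t}}\e^{2\rho s}y - \e^{-2\rho t}\e^{2\rho s}\delta_t = \delta_t\e^{2\rho s}y - \e^{2\rho s}\delta_t y = 0$, using that $\partial_{t,\rho}(\e^{2\rho t}\chi_{\R_{\geq t}}) = \delta_t$ again from the Example.

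For part (b), the computation is purely algebraic once one is in the right space. With $F\coloneqq\partial_{t,\rho}^{-1}f$, add the two defining formulas for $P_t f$ and $Q_t f$:
\[
P_t f + Q_t f = \partial_{t,\rho}\bigl(\chi_{\R_{\geq t}}(\m)+\chi_{\R_{\leq t}}(\m)\bigr)F - \e^{-2\rho t}\bigl(F(t+)-F(t-)\bigr)\delta_t.
\]
Since $\chi_{\R_{\geq t}}(\m)+\chi_{\R_{\leq t}}(\m) = 1$ as operators on $L_{2,\rho}(\R;H)$ (the overlap at the single point $t$ is a null set), the first term is $\partial_{t,\rho}F = f$, and rearranging yields the claimed identity. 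The only thing to check is that $f\in\dom(P_t)\cap\dom(Q_t)$ ensures both $F(t+)$ and $F(t-)$ exist, which is exactly how those domains were defined.

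Part (c) is the substantive one and I expect it to be the main obstacle. One direction is comparatively soft: if $\spt f\subseteq\R_{\leq t}$ then $F = \partial_{t,\rho}^{-1}f$ is, by the integral formula in Proposition~\ref{prop:td}(c), constant and equal to its total integral on $\R_{\geq t}$; in particular $F(t+)$ exists, so $f\in\dom(P_t)$, and $\chi_{\R_{\geq t}}(\m)F$ is this constant times $\chi_{\R_{\geq t}}$, whose distributional derivative is exactly $\e^{-2\rho t}F(t+)\delta_t$ (by the Example), so $P_t f = 0$. For the converse, suppose $f\in\ker(P_t)$; then $\partial_{t,\rho}\chi_{\R_{\geq t}}(\m)F = \e^{-2\rho t}F(t+)\delta_t$, and I would test against $\varphi\in C_c^\infty(\R;H)$ with $\spt\varphi\subseteq\R_{>t}$: the right-hand side vanishes on such $\varphi$, while the left-hand side reduces to $-\int_t^\infty\langle F(s),(\varphi\e^{-2\rho\cdot})'(s)\rangle\,\mathrm{d}s$, which forces the distributional derivative of $F$ on $\R_{>t}$ to be zero, hence $F$ is (a.e.) constant there; but $F\in C_\rho(\R;H)$ via Proposition~\ref{prop:td}(e) and $F$ must decay appropriately as $t\to\infty$ if $\rho>0$ — more precisely $F\in L_{2,\rho}(\R;H)$ — so the constant is $0$, i.e.\ $F = \partial_{t,\rho}^{-1}f$ vanishes on $\R_{>t}$, and then applying $\partial_{t,\rho}$ gives $\spt f\subseteq\R_{\leq t}$. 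The delicate points are making the support-of-a-distribution bookkeeping precise in $H_\rho^{-1}(\R;H)$ (the pairing is with $H^1(\partial_{t,\rho}^\ast)$, and one needs the density of $C_c^\infty$ from Proposition~\ref{prop:td}(a) to localize), and ruling out a nonzero constant for $F$ on the half-line, which is where the $\rho>0$ weighting and square-integrability are essential rather than decorative.
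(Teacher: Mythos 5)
Parts (a) and (b) of your plan are correct and essentially identical to the paper's proof. The problems are in (c), in both directions. For the implication $\spt f\subseteq\R_{\leq t}\Rightarrow f\in\ker(P_{t})$ you justify the constancy of $F\coloneqq\partial_{t,\rho}^{-1}f$ on $\R_{\geq t}$ by ``the integral formula in Proposition \ref{prop:td}(c)''. That formula is only available for $f\in L_{2,\rho}(\R;H)$; in (c) the element $f$ is a general member of $H_{\rho}^{-1}(\R;H)$, so it has no pointwise values and no integral representation, and proving that $F$ is a.e.\ constant on $\R_{\geq t}$ (which also yields the existence of $F(t+)$, i.e.\ $f\in\dom(P_{t})$) is exactly the nontrivial content of this direction. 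The paper supplies it by a duality argument: with $V=\{\chi_{\R_{\geq t}}x\,;\,x\in H\}$ one shows $\langle\chi_{\R_{\geq t}}(\m)F,g\rangle_{L_{2,\rho}}=0$ for every $g\in V^{\bot}$, hence $\chi_{\R_{\geq t}}(\m)F\in V$. (Alternatively one can check that the ordinary distributional derivative of $F$ vanishes on $\R_{>t}$ and invoke the ``zero derivative implies locally constant'' lemma; but some such argument must be given, and your citation does not provide one.)

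In the converse direction your argument contains a step that is simply false: from $F$ a.e.\ constant on $\R_{>t}$ you conclude that the constant must be $0$ because $F\in L_{2,\rho}(\R;H)$, and you even flag this as essential. For $\rho>0$ the weight $\e^{-2\rho s}$ is integrable at $+\infty$, so nonzero constants on right half-lines do lie in $L_{2,\rho}$; concretely, $f=\delta_{t}y\in\ker(P_{t})$ by part (a), while $\partial_{t,\rho}^{-1}f=\e^{2\rho t}\chi_{\R_{\geq t}}y$ equals a nonzero constant on $\R_{>t}$. So ``$F$ vanishes on $\R_{>t}$'' is not deducible and is in general wrong. The conclusion you want is nevertheless reachable, because constancy alone suffices: for $\varphi\in C_{c}^{\infty}(\R_{>t};H)$ one has $\langle f,\varphi\rangle=\int_{t}^{\infty}\langle F(s),-(\varphi\e^{-2\rho\cdot})'(s)\rangle\d s=0$ when $F$ is constant there, since the integrand is a total derivative of a compactly supported function. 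The paper's converse is even more direct and avoids constancy altogether: for such $\varphi$ one has $\spt\partial_{t,\rho}^{\ast}\varphi\subseteq\R_{>t}$, hence $\langle f,\varphi\rangle=\langle\chi_{\R_{\geq t}}(\m)F,\partial_{t,\rho}^{\ast}\varphi\rangle_{L_{2,\rho}}=\langle P_{t}f,\varphi\rangle+\e^{-2\rho t}F(t+)\varphi(t)=0$, using $P_{t}f=0$ and $\varphi(t)=0$. As written, your part (c) therefore has a genuine gap in the forward direction and an invalid (though repairable) inference in the backward direction.
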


\begin{proof}
\begin{enumerate}[(a)]

\item We note that $\partial_{t,\rho}^{-1}\delta_{s}y=\e^{2\rho s}\chi_{\R_{\geq s}}y$
and hence, $\delta_{s}\in\dom(P_{t}).$ Moreover, 
\[
P_{t}\delta_{s}y=\partial_{t,\rho}\chi_{\R_{\geq t}}(\m)\chi_{\R_{\geq s}}y\e^{2\rho s}-\e^{-2\rho t}\left(\e^{2\rho s}\chi_{\R_{\geq s}}y\right)(t+)\delta_{t}=\begin{cases}
\delta_{s}y & \text{ if }s>t,\\
0 & \text{ if }s\leq t.
\end{cases}
\]

\item If $f\in\dom(P_{t})\cap\dom(Q_{t})$ we compute 
\begin{align*}
P_{t}f+Q_{t}f & =\partial_{t,\rho}\chi_{\R_{\geq t}}(\m)\partial_{t,\rho}^{-1}f-\e^{-2\rho t}\left(\partial_{t,\rho}^{-1}f\right)(t+)\delta_{t}+\partial_{t,\rho}\chi_{\R_{\leq t}}(\m)\partial_{t,\rho}^{-1}f+\e^{-2\rho t}\left(\partial_{t,\rho}^{-1}f\right)(t-)\delta_{t}\\
 & =\partial_{t,\rho}\partial_{t,\rho}^{-1}f-\e^{-2\rho t}\left(\left(\partial_{t,\rho}^{-1}f\right)(t+)-\left(\partial_{t,\rho}^{-1}f\right)(t-)\right)\delta_{t}\\
 & =f-\e^{-2\rho t}\left(\left(\partial_{t,\rho}^{-1}f\right)(t+)-\left(\partial_{t,\rho}^{-1}f\right)(t-)\right)\delta_{t}.
\end{align*}

\item Let $f\in H_{\rho}^{-1}(\R;H)$ and assume first that $\spt f\subseteq\R_{\leq t}$.
We first prove that $\partial_{t,\rho}^{-1}f$ is constant on $\R_{\geq t}$.
For doing so, we define 
\[
V\coloneqq\{\chi_{\R_{\geq t}}x\,;\,x\in H\}\subseteq L_{2,\rho}(\R;H).
\]
Then $V$ is a closed subspace and for $g\in L_{2,\rho}(\R;H)$ we
have that 
\[
g\in V^{\bot}\quad\Leftrightarrow\quad\int_{t}^{\infty}g(s)\e^{-2\rho s}\d s=0.
\]
 For $g\in L_{2,\rho}(\R;H)$ we obtain 
\[
\langle\chi_{\R_{\geq t}}(\m)\partial_{t,\rho}^{-1}f,g\rangle_{L_{2,\rho}(\R;H)}=\langle f,\left(\partial_{t,\rho}^{\ast}\right)^{-1}\chi_{\R_{\geq t}}(\m)g\rangle_{H_{\rho}^{-1}(\R;H)\times H_{\rho}^{1}(\R;H)}
\]
 and an elementary computation shows 
\[
\left(\left(\partial_{t,\rho}^{\ast}\right)^{-1}\chi_{\R_{\geq t}}(\m)g\right)(s)=\int_{s}^{\infty}\chi_{\R_{\geq t}}(r)g(r)\e^{2\rho(s-r)}\d r\quad(s\in\R).
\]
Consequently, for $g\in V^{\bot}$ we infer that $\left(\partial_{t,\rho}^{\ast}\right)^{-1}\chi_{\R_{\geq t}}(\m)g=0$
on $\R_{\leq t}.$ Hence, $\langle\chi_{\R_{\geq t}}(\m)\partial_{t,\rho}^{-1}f,g\rangle_{L_{2,\rho}(\R;H)}=0$
for each $g\in V^{\bot}$ and thus, $\chi_{\R_{\geq t}}(\m)\partial_{t,\rho}^{-1}f\in V$,
which proves that $\partial_{t,\rho}^{-1}f$ is constant on $\R_{\geq t}.$
In particular, this shows $f\in\dom(P_{t})$ and 
\begin{align*}
P_{t}f & =\partial_{t,\rho}\chi_{\R_{\geq t}}(\m)\partial_{t,\rho}^{-1}f-\e^{-2\rho t}\left(\partial_{t,\rho}^{-1}f\right)(t+)\delta_{t}\\
 & =\partial_{t,\rho}\chi_{\R_{\geq t}}\left(\partial_{t,\rho}^{-1}f\right)(t+)-\e^{-2\rho t}\left(\partial_{t,\rho}^{-1}f\right)(t+)\delta_{t}\\
 & =0.
\end{align*}
Assume on the other hand that $f\in\ker(P_{t})$ and let $\varphi\in C_{c}^{\infty}(\R_{>t};H).$
We then compute, using that $\spt\partial_{t,\rho}^{\ast}\varphi\subseteq\R_{>t}$
\begin{align*}
\langle f,\varphi\rangle_{H_{\rho}^{-1}(\R;H)\times H_{\rho}^{1}(\R;H)} & =\langle\partial_{t,\rho}^{-1}f,\partial_{t,\rho}^{\ast}\varphi\rangle_{L_{2,\rho}(\R;H)}\\
 & =\langle Pf,\varphi\rangle_{H_{\rho}^{-1}\times H_{\rho}^{1}}+\e^{-2\rho t}\left(\partial_{t,\rho}^{-1}f\right)(t+)\varphi(t)\\
 & =0,
\end{align*}
which gives $\spt f\subseteq\R_{\leq t}.$\qedhere 

\end{enumerate}
\end{proof}

\section{Admissible histories for evolutionary equations\label{sec:Admissible-histories-for}}

In this section we study evolutionary problems of the following form
\begin{align}
\left(\partial_{t,\rho}M(\partial_{t,\rho})+A\right)u & =0\quad\text{on }\R_{>0},\nonumber \\
u & =g\quad\text{on }\R_{<0},\label{eq:IVP}
\end{align}
where $M$ and $A$ are as in \prettyref{thm:well_posed} and $g$
is a given function on $\R_{<0}$. The first goal is to rewrite this
`Initial value problem' into a proper evolutionary equations as
it is introduced in \prettyref{sec:Evolutionary-Problems}. For doing
so, we start with some heuristics to motivate the definition which
will be made below. In particular, for the moment we will not care
about domains of operators. \\
We will now write \prettyref{eq:IVP} as an evolutionary equation
for the unknown $v\coloneqq u|_{\R_{\geq0}}$, which is the part of
$u$ to be determined. For doing so, we first assume that $u\in H_{\rho}^{1}(\R;H)$
for some $\rho>0,$ which means that $v+g\in H_{\rho}^{1}(\R;H).$
We interpret the first line of \prettyref{eq:IVP} as 
\[
P_{0}\left(\partial_{t,\rho}M(\partial_{t,\rho})+A\right)u=0,
\]
where $P_{0}$ is the cut-off operator introduced in \prettyref{sec:Cut-off-operators}.
The latter gives 
\begin{align*}
0 & =P_{0}\left(\partial_{t,\rho}M(\partial_{t,\rho})+A\right)u\\
 & =P_{0}\left(\partial_{t,\rho}M(\partial_{t,\rho})+A\right)v+P_{0}\left(\partial_{t,\rho}M(\partial_{t,\rho})+A\right)g\\
 & =\partial_{t,\rho}P_{0}M(\partial_{t,\rho})v+AP_{0}v-\left(M(\partial_{t,\rho})v\right)(0+)\delta_{0}+P_{0}\partial_{t,\rho}M(\partial_{t,\rho})g+AP_{0}g\\
 & =\partial_{t,\rho}P_{0}M(\partial_{t,\rho})v+Av+P_{0}\partial_{t,\rho}M(\partial_{t,\rho})g-\left(M(\partial_{t,\rho})v\right)(0+)\delta_{0}.
\end{align*}
Since $v$ is supported on $\R_{\geq0}$ by assumption and $M(\partial_{t,\rho})$
is causal by \prettyref{prop:material law}, we infer that $M(\partial_{t,\rho})v$
is also supported on $\R_{\geq0}$ and so, $P_{0}M(\partial_{t,\rho})v=M(\partial_{t,\rho})v.$
Hence, we arrive at an evolutionary problem for $v$ of the form 
\[
\left(\partial_{t,\rho}M(\partial_{t,\rho})+A\right)v=\left(M(\partial_{t,\rho})v\right)(0+)\delta_{0}-P_{0}\partial_{t,\rho}M(\partial_{t,\rho})g.
\]

Since $u=v+g\in H_{\rho}^{1}(\R;H)$ by assumption, we infer that
$u$ is continuous by \prettyref{prop:td} (e) and hence, the limits
$v(0+)$ and $g(0-)$ exist and coincide. Hence, $v-\chi_{\R_{\geq0}}g(0-)\in H_{\rho}^{1}(\R;H)$
and vanishes on $\R_{<0}.$ The latter gives
\begin{align*}
\left(M(\partial_{t,\rho})v\right)(0+) & =\left(M(\partial_{t,\rho})(v-\chi_{\R_{\geq0}}g(0-))\right)(0+)+\left(M(\partial_{t,\rho})\chi_{\R_{\geq0}}g(0-)\right)(0+)\\
 & =\left(M(\partial_{t,\rho})\chi_{\R_{\geq0}}g(0-)\right)(0+),
\end{align*}

where in the last equality we have used that $M(\partial_{t,\rho})(v-\chi_{\R_{\geq0}}g(0-))\in H_{\rho}^{1}(\R;H)$,
hence it is continuous, and vanishes on $\R_{\leq0}$ due to causality.
Summarising, we end up with the following problem for $v$
\begin{equation}
\left(\partial_{t,\rho}M(\partial_{t,\rho})+A\right)v=\left(M(\partial_{t,\rho})\chi_{\R_{\geq0}}g(0-)\right)(0+)\delta_{0}-P_{0}\partial_{t,\rho}M(\partial_{t,\rho})g.\label{eq:IVP_new}
\end{equation}

Now, to make sense of \prettyref{eq:IVP_new} we need to ensure that
the right hand side is well-defined. In particular, we need that $\left(M(\partial_{t,\rho})\chi_{\R_{\geq0}}g(0-)\right)(0+)$
exists. In order to ensure that, we introduce the following notion.
\begin{defn*}
Let $H$ be a Hilbert space, $\rho_{0}\geq0$ and $M:\C_{\Re>\rho_{0}}\to L(H)$
be analytic and bounded. We call $M$ \emph{regularising}, if for
all $x\in H,\rho>\rho_{0}$ the limit 
\[
\left(M(\partial_{t,\rho})\chi_{\R_{\geq0}}x\right)(0+)
\]
exists. Moreover, for $\rho>0$ we define the space 
\[
H_{\rho}^{1}(\R_{\leq0};H)\coloneqq\left\{ f|_{\R_{\leq0}}\,;\,f\in H_{\rho}^{1}(\R;H)\right\} .
\]
\end{defn*}
As it turns out, this assumption suffices to obtain a well-defined
expression on the right hand side of \prettyref{eq:IVP_new}.
\begin{prop}
Let $H$ be a Hilbert space, $\rho_{0}\geq0$ and $M:\C_{\Re>\rho_{0}}\to L(H)$
be analytic and bounded and assume that $M$ is regularising. Then
for each $g\in H_{\rho}^{1}(\R_{\leq0};H)$ with $\rho>\rho_{0}$
we have that 
\[
\partial_{t,\rho}M(\partial_{t,\rho})g\in\dom(P_{0}).
\]
\end{prop}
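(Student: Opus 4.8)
The plan is to reduce the claim, by a decomposition of $g$ that splits off its jump at the origin, to the defining property of a regularising material law. Read $g\in H_{\rho}^{1}(\R_{\leq0};H)$ as an element of $L_{2,\rho}(\R;H)$ by extending it by zero to $\R_{>0}$; as $g$ is the restriction of some $\tilde{g}\in H_{\rho}^{1}(\R;H)$, this extension indeed lies in $L_{2,\rho}(\R;H)$. Then $M(\partial_{t,\rho})g\in L_{2,\rho}(\R;H)$, and since $\partial_{t,\rho}^{-1}$ inverts the unitary extension $\partial_{t,\rho}\colon L_{2,\rho}(\R;H)\to H_{\rho}^{-1}(\R;H)$, we have $\partial_{t,\rho}^{-1}\left(\partial_{t,\rho}M(\partial_{t,\rho})g\right)=M(\partial_{t,\rho})g$. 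By the definition of $\dom(P_{0})$, the claim $\partial_{t,\rho}M(\partial_{t,\rho})g\in\dom(P_{0})$ is therefore equivalent to the existence of the right limit $\left(M(\partial_{t,\rho})g\right)(0+)$. Since one-sided limits (in the sense recalled above) are additive, it suffices to write $M(\partial_{t,\rho})g$ as a finite sum of functions each possessing a right limit at $0$.

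The key step is a decomposition of $g$ itself. Fix an extension $\tilde{g}\in H_{\rho}^{1}(\R;H)$ of $g$ and put $c\coloneqq\tilde{g}(0)\in H$; this makes sense by \prettyref{prop:td}~(e), and $c=g(0-)$. Since $\rho>\rho_{0}\geq0$ we have $\chi_{\R_{\geq0}}c\in L_{2,\rho}(\R;H)$, and the function $h\coloneqq\chi_{\R_{\geq0}}(\m)\tilde{g}-\chi_{\R_{\geq0}}c$ lies in $H_{\rho}^{1}(\R;H)$: it vanishes on $\R_{<0}$, equals $\tilde{g}-c$ on $\R_{>0}$, and has left and right limit $0$ at the origin, so a computation of its distributional derivative shows that the two Dirac masses at $0$ cancel and $h'=\chi_{\R_{\geq0}}(\m)\tilde{g}'\in L_{2,\rho}(\R;H)$. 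Using $g=\chi_{\R_{\leq0}}(\m)\tilde{g}$ together with $\chi_{\R_{\geq0}}(\m)\tilde{g}+\chi_{\R_{\leq0}}(\m)\tilde{g}=\tilde{g}$ in $L_{2,\rho}(\R;H)$, this yields
\[
g=\tilde{g}-h-\chi_{\R_{\geq0}}c,\qquad\tilde{g},\,h\in H_{\rho}^{1}(\R;H).
\]

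Finally, apply $M(\partial_{t,\rho})$ termwise. As a function of $\partial_{t,\rho}$, the bounded operator $M(\partial_{t,\rho})$ commutes with $\partial_{t,\rho}$ and hence maps $H_{\rho}^{1}(\R;H)=\dom(\partial_{t,\rho})$ into itself; by \prettyref{prop:td}~(e) the functions $M(\partial_{t,\rho})\tilde{g}$ and $M(\partial_{t,\rho})h$ have continuous representatives, so their right limits at $0$ exist. The remaining term $M(\partial_{t,\rho})\chi_{\R_{\geq0}}c$ has a right limit at $0$ precisely because $M$ is regularising. Summing the three contributions shows that $\left(M(\partial_{t,\rho})g\right)(0+)$ exists, which is the assertion. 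The one genuinely delicate point is the decomposition $g=\tilde{g}-h-\chi_{\R_{\geq0}}c$: one must subtract exactly the jump height $c=g(0-)$ so that the remainder $h$ stays in $H_{\rho}^{1}(\R;H)$, and this is the only place where the regularising hypothesis enters.
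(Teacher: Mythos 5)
Your proof is correct and follows essentially the same route as the paper: since $\tilde g-h=g+\chi_{\R_{\geq0}}g(0-)$, your decomposition is exactly the paper's splitting of $g$ into an $H_{\rho}^{1}(\R;H)$-part (handled via invariance of $H_{\rho}^{1}$ under $M(\partial_{t,\rho})$ and the Sobolev embedding) plus the step $\chi_{\R_{\geq0}}g(0-)$, which is where the regularising hypothesis is used. Your version merely spells out the verification of the $H_{\rho}^{1}$-membership and the reduction of $\dom(P_{0})$-membership to the existence of $\left(M(\partial_{t,\rho})g\right)(0+)$ in more detail.
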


\begin{proof}
By assumption $g=f|_{\R_{\leq0}}$ for some $f\in H_{\rho}^{1}(\R;H).$
Hence, $g(0-)=f(0)$ exists and hence, an easy computation shows that
$g-\chi_{\R_{\leq0}}g(0-)\in H_{\rho}^{1}(\R;H)$. Hence, also $M(\partial_{t,\rho})\left(g-\chi_{\R_{\geq0}}g(0-)\right)\in H_{\rho}^{1}(\R;H)$
and thus, 
\[
\left(M(\partial_{t,\rho})g\right)(0+)=\left(M(\partial_{t,\rho})\left(g-\chi_{\R_{\geq0}}g(0-)\right)\right)(0+)+\left(M(\partial_{t,\rho})\chi_{\R_{\geq0}}g(0-)\right)(0+)
\]
exists and so, $\partial_{t,\rho}M(\partial_{t,\rho})g\in\dom(P_{0}).$ 
\end{proof}
We are now in the position to define the space of admissible history
functions $g$.
\begin{defn*}
Let $H$ be a Hilbert space, $\rho_{0}\geq0$ and $M:\C_{\Re>\rho_{0}}\to L(H)$
be analytic, bounded and regularising. Moreover, let $A:\dom(A)\subseteq H\to H$
be densely defined, closed and linear. For notational convenience,
we set 
\[
\Gamma_{\rho}:H_{\rho}^{1}(\R_{\leq0};H)\to H,\quad g\mapsto\left(M(\partial_{t,\rho})\chi_{\R_{\geq0}}g(0-)\right)(0+)
\]
and 
\[
K_{\rho}:H_{\rho}^{1}(\R_{\leq0};H)\to H_{\rho}^{-1}(\R;H),\quad g\mapsto P_{0}\partial_{t,\rho}M(\partial_{t,\rho})g
\]
for $\rho>\rho_{0}.$ Furthermore, we assume that the evolutionary
problem associated with $(M,A)$ is well-posed and define 
\[
\His_{\rho}\coloneqq\{g\in H_{\rho}^{1}(\R_{\leq0};H)\,;\,S_{\rho}\left(\Gamma_{\rho}g\delta_{0}-K_{\rho}g\right)+g\in H_{\rho}^{1}(\R;H)\}
\]
for each $\rho>s_{0}(M,A),$ the \emph{space of admissible histories}.
Here $S_{\rho}$ denotes the extension of the solution operator $(\partial_{t,\rho}M(\partial_{t,\rho})+A)^{-1}$
to $H_{\rho}^{-1}(\R;H)$ (cp. \prettyref{prop:extension_sol_op}).
Moreover, we set 
\[
\IV_{\rho}\coloneqq\left\{ g(0-)\,;\,g\in\His_{\rho}\right\} 
\]
the space of \emph{admissible initial values.}
\end{defn*}
\begin{rem}
\label{rem:Gamma}We have 
\[
\Gamma_{\rho}g=(M(\partial_{t,\rho})g)(0-)-\left(M(\partial_{t,\rho})g\right)(0+)
\]
for $g\in H_{\rho}^{1}(\R_{\leq0};H).$ Indeed, since $M(\partial_{t,\rho})$
is causal we infer 
\begin{align*}
(M(\partial_{t,\rho})g)(0-) & =(M(\partial_{t,\rho})(g+\chi_{\R_{\geq0}}g(0-)))(0-)\\
 & =(M(\partial_{t,\rho})(g+\chi_{\R_{\geq0}}g(0-)))(0+),
\end{align*}
since $g+\chi_{\R_{\geq0}}g(0-)\in H_{\rho}^{1}(\R;H).$ Thus, 
\[
(M(\partial_{t,\rho})g)(0-)-\left(M(\partial_{t,\rho})g\right)(0+)=\left(M(\partial_{t,\rho})\chi_{\R_{\geq0}}g(0-)\right)(0+)=\Gamma_{\rho}g.
\]
\end{rem}

We come back to the heuristic computation at the beginning of this
section and show, that for $g\in\His_{\rho}$ the computation can
be made rigorously.
\begin{prop}
Let $H$ be a Hilbert space, $\rho_{0}\geq0$ and $M:\C_{\Re>\rho_{0}}\to L(H)$
be analytic, bounded and regularising. Moreover, let $A:\dom(A)\subseteq H\to H$
be densely defined, closed and linear and assume that the evolutionary
problem associated with $(M,A)$ is well-posed. Let $\rho>s_{0}(M,A)$
and $g\in\His_{\rho}$. We set 
\[
v\coloneqq S_{\rho}\left(\Gamma_{\rho}g\delta_{0}-K_{\rho}g\right)
\]
and $u\coloneqq v+g.$ Then $\spt v\subseteq\R_{\geq0}$, $u\in H_{\rho}^{1}(\R;H)$
and satisfies \prettyref{eq:IVP}. 
\end{prop}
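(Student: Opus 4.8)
The plan is to read \prettyref{eq:IVP} rigorously in the way the heuristics at the beginning of this section suggest and then to run that computation backwards. The whole point of the definition of $\His_{\rho}$ is that for $g\in\His_{\rho}$ the function $u=v+g$ lies in $H_{\rho}^{1}(\R;H)$, so that by \prettyref{prop:td}~(e) every one-sided limit occurring in the heuristics is a genuine object, while \prettyref{prop:material law} and the cut-off calculus of \prettyref{sec:Cut-off-operators} control all the supports.

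First I would establish $\spt v\subseteq\R_{\geq0}$. The Dirac distribution $\delta_{0}$ is supported at the origin, and $K_{\rho}g=P_{0}\partial_{t,\rho}M(\partial_{t,\rho})g$ is, by the definition of $P_{0}$, of the form $\partial_{t,\rho}\chi_{\R_{\geq0}}(\m)M(\partial_{t,\rho})g-c\delta_{0}$ with $c\in H$, hence supported in $\R_{\geq0}$; so the right-hand side $\Gamma_{\rho}g\delta_{0}-K_{\rho}g$ is supported in $\R_{\geq0}$. By \prettyref{thm:well_posed}, $S_{\rho}$ is causal on $L_{2,\rho}(\R;H)$, and its $H_{\rho}^{-1}$-extension from \prettyref{prop:extension_sol_op} satisfies $S_{\rho}=\partial_{t,\rho}S_{\rho}\partial_{t,\rho}^{-1}$; since $\partial_{t,\rho}^{-1}$ equals $\int_{-\infty}^{\cdot}$ for $\rho>0$ (\prettyref{prop:td}~(c)), it maps distributions supported in $\R_{\geq0}$ to functions supported in $\R_{\geq0}$, so $S_{\rho}$ is causal on $H_{\rho}^{-1}(\R;H)$ as well and $\spt v\subseteq\R_{\geq0}$. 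The assertion $u\in H_{\rho}^{1}(\R;H)$ is then literally the membership $g\in\His_{\rho}$; in particular $v=u-g\in L_{2,\rho}(\R;H)$ is an honest $L_{2,\rho}$-function supported in $\R_{\geq0}$.

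For the history condition in \prettyref{eq:IVP}, restricting $u=v+g$ to $\R_{<0}$ annihilates $v$ and leaves $u=g$ there. For the equation on $\R_{>0}$, understood (following the heuristics) as $P_{0}\big((\partial_{t,\rho}M(\partial_{t,\rho})+A)u\big)=0$, I use that $v=S_{\rho}(\Gamma_{\rho}g\delta_{0}-K_{\rho}g)$ solves $(\partial_{t,\rho}M(\partial_{t,\rho})+A)v=\Gamma_{\rho}g\delta_{0}-K_{\rho}g$ by construction; adding $(\partial_{t,\rho}M(\partial_{t,\rho})+A)g$ and applying $P_{0}$, it remains to show
\[
P_{0}\big((\partial_{t,\rho}M(\partial_{t,\rho})+A)g\big)+P_{0}(\Gamma_{\rho}g\delta_{0})-P_{0}K_{\rho}g=0.
\]
Now $P_{0}(\Gamma_{\rho}g\delta_{0})=0$ and $P_{0}g=0$ by \prettyref{prop:properties_cut_off}~(a) and (c) (the latter since $\spt g\subseteq\R_{\leq0}$), whence $P_{0}Ag=AP_{0}g=0$ because $P_{0}$ acts only in time and commutes with $A$; moreover $P_{0}\partial_{t,\rho}M(\partial_{t,\rho})g=K_{\rho}g$ is the definition of $K_{\rho}$, and $P_{0}K_{\rho}g=K_{\rho}g$ since $P_{0}$ is idempotent on its domain. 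Hence the left-hand side equals $K_{\rho}g+0-K_{\rho}g=0$, which is the first line of \prettyref{eq:IVP}.

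The step I expect to be the main obstacle is the careful bookkeeping of the extrapolation spaces in which all this takes place: $\partial_{t,\rho}M(\partial_{t,\rho})g$ naturally lives in $H_{\rho}^{-1}(\R;H)$, the term $Ag$ only makes sense in a space of $H^{-1}(|A^{\ast}|+1)$-valued functions, and $\Gamma_{\rho}g\delta_{0}$ carries a temporal Dirac; one must therefore fix a single ambient space containing $(\partial_{t,\rho}M(\partial_{t,\rho})+A)u$, $(\partial_{t,\rho}M(\partial_{t,\rho})+A)v$ and $(\partial_{t,\rho}M(\partial_{t,\rho})+A)g$, check that the $H_{\rho}^{-1}$-extension of $\overline{\partial_{t,\rho}M(\partial_{t,\rho})+A}$ implicit in $S_{\rho}$ is consistent with the pointwise-in-time extension of $A$, and verify that $P_{0}$ extends to that space and still commutes with $A$ while interacting with $\partial_{t,\rho}M(\partial_{t,\rho})$ only through the Dirac-correction term of the lemma in \prettyref{sec:Cut-off-operators}. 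Once this is in place, the only genuinely analytic inputs are the causality of the $H_{\rho}^{-1}$-extension of $S_{\rho}$ and the continuity statement \prettyref{prop:td}~(e); the rest is the soft algebra of cut-off operators from \prettyref{prop:properties_cut_off}.
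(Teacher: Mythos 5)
Your first half is fine and is essentially the paper's argument: $u\in H_{\rho}^{1}(\R;H)$ is literally the definition of $g\in\His_{\rho}$, and the support statement for $v$ is obtained exactly as in the paper by writing $v=\partial_{t,\rho}S_{\rho}\partial_{t,\rho}^{-1}\left(\Gamma_{\rho}g\delta_{0}-K_{\rho}g\right)$, observing that $\partial_{t,\rho}^{-1}\left(\Gamma_{\rho}g\delta_{0}-K_{\rho}g\right)$ is an honest $L_{2,\rho}$-function supported in $\R_{\geq0}$, and invoking causality of $S_{\rho}$ on $L_{2,\rho}$ from \prettyref{thm:well_posed} (your blanket claim that the $H_{\rho}^{-1}$-extension of $S_{\rho}$ is causal is not needed and would itself require a short extra argument).

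For the equation on $\R_{>0}$, however, your argument has a genuine gap, and it sits precisely at the point you yourself label ``the main obstacle'' and then defer. Your computation rests on the identity $\left(\partial_{t,\rho}M(\partial_{t,\rho})+A\right)v=\Gamma_{\rho}g\delta_{0}-K_{\rho}g$, asserted ``by construction'', and on then adding $\left(\partial_{t,\rho}M(\partial_{t,\rho})+A\right)g$ and applying $P_{0}$. But $v$ is merely an $L_{2,\rho}(\R;H)$-function and $g$ does not take values in $\dom(A)$, so the three objects $\left(\partial_{t,\rho}M(\partial_{t,\rho})+A\right)v$, $\left(\partial_{t,\rho}M(\partial_{t,\rho})+A\right)g$ and $\left(\partial_{t,\rho}M(\partial_{t,\rho})+A\right)u$ only exist after extrapolating simultaneously in time and in space (values in $H^{-1}(|A^{\ast}|+1)$); one must check that the closure $\overline{\partial_{t,\rho}M(\partial_{t,\rho})+A}$ underlying $S_{\rho}$, and its further $H_{\rho}^{-1}$-extension, are restrictions of this ``sum of extensions'', that $P_{0}$ (including its domain, its idempotency, and \prettyref{prop:properties_cut_off}) carries over to $H^{-1}(|A^{\ast}|+1)$-valued extrapolation spaces, and that $P_{0}$ interacts with the extended $A$ as claimed. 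None of this is in the paper, and none of it is carried out in your proposal, so the step that actually proves the first line of \prettyref{eq:IVP} is missing. The paper's proof is arranged exactly so that this scaffolding is unnecessary: it interprets ``$=0$ on $\R_{>0}$'' weakly, tests against $\varphi\in C_{c}^{\infty}(\R_{>0};\dom(A^{\ast}))$, and moves all operators onto $\varphi$ by adjoints, using only $\langle v,\left(\partial_{t,\rho}M(\partial_{t,\rho})+A\right)^{\ast}\varphi\rangle=\langle\Gamma_{\rho}g\delta_{0}-K_{\rho}g,\varphi\rangle$, the vanishing $\varphi(0)=0$ (twice), and $\spt g\subseteq\R_{\leq0}$. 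Your $P_{0}$-route could be completed (and via \prettyref{prop:properties_cut_off} (c) it is equivalent to the distributional support statement), but as written the central identity is only asserted, whereas the dualised computation is where the proof's actual content lies.
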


\begin{proof}
Note that by assumption $u=v+g\in H_{\rho}^{1}(\R;H)$ and thus, $v=u-g\in L_{2,\rho}(\R;H).$
We prove that $\spt v\subseteq\R_{\geq0}.$ For doing so, we compute
\begin{align*}
\partial_{t,\rho}^{-1}v & =\partial_{t,\rho}^{-1}S_{\rho}\left(\Gamma_{\rho}g\delta_{0}-K_{\rho}g\right)\\
 & =S_{\rho}\left(\partial_{t,\rho}^{-1}\Gamma_{\rho}g\delta_{0}-\partial_{t,\rho}^{-1}K_{\rho}g\right)\\
 & =S_{\rho}\left(\Gamma_{\rho}g\chi_{\R_{\geq0}}-\chi_{\R_{\geq0}}(\m)M(\partial_{t,\rho})g+\left(M(\partial_{t,\rho})g\right)(0+)\chi_{\R_{\geq0}}\right)
\end{align*}
and hence, $\spt\partial_{t,\rho}^{-1}v\subseteq\R_{\geq0}$ by causality
of $S_{\rho}.$ The latter implies $\spt v\subseteq\R_{\geq0}$. Thus,
we have $u=g$ on $\R_{<0}$ and we are left to show 
\[
\left(\partial_{t,\rho}M(\partial_{t,\rho})+A\right)u=0\quad\text{on }\R_{>0}.
\]

For doing so, let $\varphi\in C_{c}^{\infty}(\R_{>0};\dom(A^{\ast}))$.
We compute 
\begin{align*}
 & \langle\left(\partial_{t,\rho}M(\partial_{t,\rho})+A\right)u,\varphi\rangle_{L_{2,\rho}(\R;H^{-1}(|A^{\ast}|+1))\times L_{2,\rho}(\R;H^{1}(|A^{\ast}|+1))}\\
= & \langle u,\left(\partial_{t,\rho}M(\partial_{t,\rho})+A\right)^{\ast}\varphi\rangle_{L_{2,\rho}(\R;H)}\\
= & \langle v,\left(\partial_{t,\rho}M(\partial_{t,\rho})+A\right)^{\ast}\varphi\rangle_{L_{2,\rho}(\R;H)}+\langle g,\left(\partial_{t,\rho}M(\partial_{t,\rho})+A\right)^{\ast}\varphi\rangle_{L_{2,\rho}(\R;H)}\\
= & \langle\Gamma_{\rho}g\delta_{0}-K_{\rho}g,\varphi\rangle_{H_{\rho}^{-1}(\R;H)\times H_{\rho}^{1}(\R;H)}+\langle g,\left(\partial_{t,\rho}M(\partial_{t,\rho})\right)^{\ast}\varphi\rangle_{L_{2,\rho}(\R;H)},
\end{align*}
where in the last line we have used $\langle g,A^{\ast}\varphi\rangle=0,$
since $\spt g\subseteq\R_{\leq0}$. Moreover, we compute 
\begin{align*}
 & \langle\Gamma_{\rho}g\delta_{0}-K_{\rho}g,\varphi\rangle_{H_{\rho}^{-1}(\R;H)\times H_{\rho}^{1}(\R;H)}\\
 & =-\langle K_{\rho}g,\varphi\rangle_{H_{\rho}^{-1}(\R;H)\times H_{\rho}^{1}(\R;H)}\\
 & =-\langle\partial_{t,\rho}\chi_{\R_{\geq0}}(\m)M(\partial_{t,\rho})g-\left(M(\partial_{t,\rho})g\right)(0+)\delta_{0},\varphi\rangle_{H_{\rho}^{-1}(\R;H)\times H_{\rho}^{1}(\R;H)}\\
 & =-\langle M(\partial_{t,\rho})g,\partial_{t,\rho}^{\ast}\varphi\rangle_{L_{2,\rho}(\R;H)},
\end{align*}
where we have used two times that $\varphi(0)=0.$ Plugging this formula
in the above computation, we infer that 
\[
\langle\left(\partial_{t,\rho}M(\partial_{t,\rho})+A\right)u,\varphi\rangle_{L_{2,\rho}(\R;H^{-1}(|A^{\ast}|+1))\times L_{2,\rho}(\R;H^{1}(|A^{\ast}|+1))}=0,
\]
which shows the claim.
\end{proof}

\section{$C_{0}$-semigroups associated with evolutionary problems\label{sec:-semigroups-associated-with}}

Throughout this section, let $H$ be a Hilbert space, $\rho_{0}\geq0$
and $M:\C_{\Re>\rho_{0}}\to L(H)$ analytic, bounded and regularising.
Moreover, let $A:\dom(A)\subseteq H\to H$ be densely defined, closed
and linear such that the evolutionary problem associated with $(M,A)$
is well-posed. \\
In this section we aim for a $C_{0}$-semigroup associated with the
evolutionary problem for $(M,A)$ acting on a suitable subspace of
$\IV_{\rho}\times\His_{\rho}$ for $\rho>s_{0}(M,A).$ For doing so,
we first need to prove that $\His_{\rho}$ is left invariant by the
time evolution. The precise statement is as follows.
\begin{thm}
\label{thm:invariance}Let $\rho>s_{0}(M,A)$ and $g\in\His_{\rho}.$
Moreover, let $v\coloneqq S_{\rho}\left(\Gamma_{\rho}g\delta_{0}-K_{\rho}g\right)$
and $u\coloneqq v+g.$ For $t>0$ we set $h\coloneqq\chi_{\R_{\leq0}}(\m)u(t+\cdot)$
and $w\coloneqq\chi_{\R_{\geq0}}(\m)u(t+\cdot).$ Then $h\in\His_{\rho}$
and 
\[
w=S_{\rho}\left(\Gamma_{\rho}h\delta_{0}-K_{\rho}h\right).
\]

In particular, $w(0+)=h(0-)\in\IV_{\rho}.$ 
\end{thm}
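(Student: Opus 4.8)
The plan is to show that the translate $u(t+\cdot)$ is itself an $H_\rho^1(\R;H)$-solution of an initial value problem of the form \prettyref{eq:IVP}, but now with history $h$, and that this forces $w$ to coincide with $S_\rho(\Gamma_\rho h\delta_0-K_\rho h)$; the membership $h\in\His_\rho$ and the identities for $w(0+)$ and $h(0-)$ then follow immediately. First I would record the translation invariance of the data. By the preceding proposition we have $u\in H_\rho^1(\R;H)$, and since translation by $t$ maps $H_\rho^1(\R;H)$ boundedly into itself, also $u(t+\cdot)\in H_\rho^1(\R;H)$; in particular, by \prettyref{prop:td}(e) it has a continuous representative, $h$ agrees on $\R_{\leq0}$ with the restriction of $u(t+\cdot)$ and hence lies in $H_\rho^1(\R_{\leq0};H)$, $w\in L_{2,\rho}(\R;H)$ with $\spt w\subseteq\R_{\geq0}$, and $h+w=u(t+\cdot)$. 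Moreover, $u$ satisfies $(\partial_{t,\rho}M(\partial_{t,\rho})+A)u=0$ on $\R_{>0}$ (again by the preceding proposition), and since $\partial_{t,\rho}$, the functional-calculus operator $M(\partial_{t,\rho})$ and the spatial operator $A$ all commute with time translation, $u(t+\cdot)$ satisfies $(\partial_{t,\rho}M(\partial_{t,\rho})+A)u(t+\cdot)=0$ on $\R_{>-t}$, hence on $\R_{>0}$; together with $u(t+\cdot)=h$ on $\R_{<0}$ this says that $u(t+\cdot)$ solves \prettyref{eq:IVP} with history $h$.

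Next I would re-run the heuristic computation from the beginning of \prettyref{sec:Admissible-histories-for} for the triple $(u(t+\cdot),h,w)$ in place of $(u,g,v)$, now making each step rigorous by means of $u(t+\cdot)\in H_\rho^1(\R;H)$. By \prettyref{prop:properties_cut_off}(c) the equation on $\R_{>0}$ yields $P_0\left((\partial_{t,\rho}M(\partial_{t,\rho})+A)u(t+\cdot)\right)=0$. Splitting $u(t+\cdot)=w+h$ and using, as in that heuristic, the rules $P_0(\partial_{t,\rho}M(\partial_{t,\rho})x)=\partial_{t,\rho}P_0M(\partial_{t,\rho})x-(M(\partial_{t,\rho})x)(0+)\delta_0$ and $P_0(Ax)=AP_0x$, together with $P_0w=w$, $P_0h=0$ and $P_0M(\partial_{t,\rho})w=M(\partial_{t,\rho})w$ (causality of $M(\partial_{t,\rho})$, \prettyref{prop:material law}, and $\spt w\subseteq\R_{\geq0}$), the $w$-part of this identity equals $(\partial_{t,\rho}M(\partial_{t,\rho})+A)w-(M(\partial_{t,\rho})w)(0+)\delta_0$ and the $h$-part equals $P_0\partial_{t,\rho}M(\partial_{t,\rho})h=K_\rho h$ (here $\partial_{t,\rho}M(\partial_{t,\rho})h\in\dom(P_0)$ because $M$ is regularising). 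Hence
\[
(\partial_{t,\rho}M(\partial_{t,\rho})+A)w=(M(\partial_{t,\rho})w)(0+)\delta_0-K_\rho h.
\]
To identify the coefficient of $\delta_0$ I would split $w=\chi_{\R_{\geq0}}w(0+)+\left(w-\chi_{\R_{\geq0}}w(0+)\right)$; continuity of $u(t+\cdot)$ at $0$ (there is no jump, since $w(0+)=u(t)=h(0-)$) shows $w-\chi_{\R_{\geq0}}w(0+)\in H_\rho^1(\R;H)$ with support in $\R_{\geq0}$, so $M(\partial_{t,\rho})$ applied to it is continuous and vanishes at $0$ by causality, whence $(M(\partial_{t,\rho})w)(0+)=\left(M(\partial_{t,\rho})\chi_{\R_{\geq0}}w(0+)\right)(0+)=\Gamma_\rho h$ by the definition of $\Gamma_\rho$ and $h(0-)=w(0+)$ (cf.\ \prettyref{rem:Gamma}).

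Thus $(\partial_{t,\rho}M(\partial_{t,\rho})+A)w=\Gamma_\rho h\delta_0-K_\rho h$. Applying the solution operator $S_\rho$ — which is legitimate exactly as in the preceding proposition, via \prettyref{prop:extension_sol_op} and the fact that $\partial_{t,\rho}w\in H_\rho^{-1}(\R;H)$ (because $w-\chi_{\R_{\geq0}}w(0+)\in H_\rho^1(\R;H)$ and $\chi_{\R_{\geq0}}=\partial_{t,\rho}^{-1}\delta_0$) — yields $w=S_\rho(\Gamma_\rho h\delta_0-K_\rho h)$. Finally $S_\rho(\Gamma_\rho h\delta_0-K_\rho h)+h=w+h=u(t+\cdot)\in H_\rho^1(\R;H)$, so $h\in\His_\rho$ by the definition of $\His_\rho$, and $w(0+)=u(t)=h(0-)\in\IV_\rho$ by continuity of $u(t+\cdot)$ and the definition of $\IV_\rho$.

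I expect the only genuine obstacle to be the bookkeeping in the second paragraph: keeping track of which extrapolation space each term lives in (in particular $Au(t+\cdot)$ being $H^{-1}(|A^\ast|+1)$-valued), verifying the $\dom(P_0)$-memberships needed in order to apply and distribute $P_0$, and justifying that $S_\rho$ inverts the resulting identity although its right-hand side lies only in $H_\rho^{-1}(\R;H)$. All of this is entirely parallel to the proposition immediately preceding \prettyref{thm:invariance}, so no new idea should be required.
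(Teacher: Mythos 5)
Your proposal is correct and takes essentially the same route as the paper: both exploit the commutation of $\partial_{t,\rho}M(\partial_{t,\rho})+A$ with time translation, the cut-off calculus of \prettyref{sec:Cut-off-operators} together with causality of $M(\partial_{t,\rho})$ to arrive at $\left(\partial_{t,\rho}M(\partial_{t,\rho})+A\right)w=\Gamma_{\rho}h\delta_{0}-K_{\rho}h$, then invert by $S_{\rho}$ and read off $h\in\His_{\rho}$ from $w+h=u(t+\cdot)\in H_{\rho}^{1}(\R;H)$. The only cosmetic difference is that you apply $P_{0}$ to the translated equation and split $u(t+\cdot)=w+h$, identifying the $\delta_{0}$-coefficient as $\left(M(\partial_{t,\rho})w\right)(0+)$, whereas the paper multiplies by $\chi_{\R_{\leq0}}(\m)$ (i.e.\ works with $Q_{0}$) and then invokes \prettyref{prop:properties_cut_off}(b) and \prettyref{rem:Gamma}; the extrapolation-space bookkeeping you flag is handled at the same level of rigour as in the paper's own argument.
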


\begin{proof}
We first note that
\[
\left(\partial_{t,\rho}M(\partial_{t,\rho})+A\right)\tau_{t}=\tau_{t}\left(\partial_{t,\rho}M(\partial_{t,\rho})+A\right),
\]
where $\tau_{t}u\coloneqq u(t+\cdot)$ for $u\in L_{2,\rho}(\R;H)$,
and hence, 
\[
\spt\left(\partial_{t,\rho}M(\partial_{t,\rho})+A\right)\tau_{t}u\subseteq\R_{\leq0}.
\]
The latter gives, employing the causality of $M(\partial_{t,\rho})$,
\begin{align*}
\left(\partial_{t,\rho}M(\partial_{t,\rho})+A\right)\tau_{t}u & =\chi_{\R_{\leq0}}(\m)\left(\partial_{t,\rho}M(\partial_{t,\rho})+A\right)\tau_{t}u\\
 & =\partial_{t,\rho}\chi_{\R_{\leq0}}(\m)M(\partial_{t,\rho})\tau_{t}u+\left(M(\partial_{t,\rho})\tau_{t}u\right)(0-)\delta_{0}+Ah\\
 & =\partial_{t,\rho}\chi_{\R_{\leq0}}(\m)M(\partial_{t,\rho})h+\left(M(\partial_{t,\rho})h\right)(0-)\delta_{0}+Ah\\
 & =Q_{0}\partial_{t,\rho}M(\partial_{t,\rho})h+Ah.
\end{align*}

The latter yields 
\begin{align*}
\left(\partial_{t,\rho}M(\partial_{t,\rho})+A\right)w & =\left(\partial_{t,\rho}M(\partial_{t,\rho})+A\right)\left(\tau_{t}u-h\right)\\
 & =Q_{0}\partial_{t,\rho}M(\partial_{t,\rho})h-\partial_{t,\rho}M(\partial_{t,\rho})h.
\end{align*}
Now, since $\partial_{t,\rho}M(\partial_{t,\rho})h\in\dom(P_{0})$
by causality of $M(\partial_{t,\rho}),$ we use \prettyref{prop:properties_cut_off}
(b) and \prettyref{rem:Gamma} to derive 
\begin{align*}
\left(\partial_{t,\rho}M(\partial_{t,\rho})+A\right)w & =-P_{0}\partial_{t,\rho}M(\partial_{t,\rho})h-\left(\left(M(\partial_{t,\rho})h\right)(0+)-\left(M(\partial_{t,\rho})h\right)(0-)\right)\delta_{0}\\
 & =\Gamma_{\rho}h\delta_{0}-K_{\rho}h,
\end{align*}
which yields the desired formula for $w$. Now $h\in\His_{\rho}$
follows, since by definition 
\[
S_{\rho}\left(\Gamma_{\rho}h\delta_{0}-K_{\rho}h\right)+h=w+h=\tau_{t}u\in H_{\rho}^{1}(\R;H).\tag*{\qedhere}
\]
\end{proof}
The latter theorem allows for defining a semigroup associated with
$(M,A).$
\begin{defn*}
Let $\rho>s_{0}(M,A)$ and set 
\[
D_{\rho}\coloneqq\{(g(0-),g)\,;\,g\in\His_{\rho}\}.
\]
For $g\in\His_{\rho}$ we set 
\[
v\coloneqq S_{\rho}\left(\Gamma_{\rho}g\delta_{0}-K_{\rho}g\right)
\]
and $u\coloneqq v+g.$ For $t\geq0$ we define 
\begin{align*}
T_{1}^{\rho}(t): & D_{\rho}\subseteq\IV_{\rho}\times\His_{\rho}\to\IV_{\rho},\quad(g(0-),g)\mapsto v(t+),\\
T_{2}^{\rho}(t): & D_{\rho}\subseteq\IV_{\rho}\times\His_{\rho}\to\His_{\rho},\quad(g(0-),g)\mapsto\chi_{\R_{\leq0}}(\m)\tau_{t}u
\end{align*}
and 
\[
T^{\rho}(t)\coloneqq(T_{1}^{\rho}(t),T_{2}^{\rho}(t)):D_{\rho}\subseteq\IV_{\rho}\times\His_{\rho}\to\IV_{\rho}\times\His_{\rho}.
\]
We call $(T^{\rho}(t))_{t\geq0}$ the \emph{semigroup associated with
$(M,A)$. }
\end{defn*}
First we show that $T^{\rho}$ defined above is indeed a strongly
continuous semigroup.
\begin{prop}
Let $\rho>s_{0}(M,A)$ and $T^{\rho}$ be the semigroup associated
with $(M,A).$ Then $T^{\rho}$ is a strongly continuous semigroup.
More precisely,
\[
T^{\rho}(t+s)=T^{\rho}(t)T^{\rho}(s)\quad(t,s\geq0)
\]
and 
\[
T^{\rho}(t)(g(0-),g)\to(g(0-),g)\quad(t\to0+)
\]
in $H\times L_{2,\rho}(\R;H)$ for each $g\in\His_{\rho}.$ 
\end{prop}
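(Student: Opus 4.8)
The plan is to push the whole statement back onto the right-translation semigroup on $L_{2,\rho}(\R;H)$, with \prettyref{thm:invariance} supplying the only nontrivial input. Fix $g\in\His_{\rho}$ and set $v\coloneqq S_{\rho}(\Gamma_{\rho}g\delta_{0}-K_{\rho}g)$, $u\coloneqq v+g$. By the definition of $\His_{\rho}$ we have $u\in H_{\rho}^{1}(\R;H)$, and the proposition at the end of \prettyref{sec:Admissible-histories-for} gives $\spt v\subseteq\R_{\geq0}$; in particular $u$ has a continuous representative by \prettyref{prop:td} (e). Since $\spt v\subseteq\R_{\geq0}$ and $\spt g\subseteq\R_{\leq0}$, the class $v$ coincides with $u$ on $\R_{>0}$ and $g$ coincides with $u$ on $\R_{<0}$, so $v$ has a representative that is continuous on $\R_{>0}$ with $v(t+)=u(t)$ for $t>0$, and letting $t\to0$ from either side yields $v(0+)=u(0)=g(0-)$. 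Also $\chi_{\R_{\leq0}}(\m)u=\chi_{\R_{\leq0}}(\m)v+\chi_{\R_{\leq0}}(\m)g=0+g=g$ in $L_{2,\rho}(\R;H)$. Hence $T^{\rho}(0)(g(0-),g)=(v(0+),\chi_{\R_{\leq0}}(\m)u)=(g(0-),g)$, i.e.\ $T^{\rho}(0)$ is the identity on $D_{\rho}$.

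For the algebraic law I would take $t,s>0$ first. By \prettyref{thm:invariance} (applied with its ``$t$'' replaced by $s$) the history $h_{s}\coloneqq\chi_{\R_{\leq0}}(\m)\tau_{s}u$ lies in $\His_{\rho}$, the function associated with $h_{s}$ is $w_{s}\coloneqq S_{\rho}(\Gamma_{\rho}h_{s}\delta_{0}-K_{\rho}h_{s})=\chi_{\R_{\geq0}}(\m)\tau_{s}u$, and $w_{s}+h_{s}=\tau_{s}u$; moreover $v(s+)=w_{s}(0+)=h_{s}(0-)$. Thus $T^{\rho}(s)(g(0-),g)=(h_{s}(0-),h_{s})$. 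Applying $T^{\rho}(t)$ to this point, and using that the ``$u$''-object attached to the history $h_{s}$ is $w_{s}+h_{s}=\tau_{s}u$, I get
\[
T^{\rho}(t)T^{\rho}(s)(g(0-),g)=\bigl(w_{s}(t+),\ \chi_{\R_{\leq0}}(\m)\tau_{t}(\tau_{s}u)\bigr).
\]
Because $\tau_{t}\tau_{s}=\tau_{t+s}$ on $L_{2,\rho}(\R;H)$, the second entry equals $\chi_{\R_{\leq0}}(\m)\tau_{t+s}u$, the second component of $T^{\rho}(t+s)(g(0-),g)$. For the first entry, $\spt w_{s}\subseteq\R_{\geq0}$ and $w_{s}+h_{s}=\tau_{s}u$ is continuous, so $w_{s}$ agrees with $\tau_{s}u$ on $\R_{>0}$ and therefore $w_{s}(t+)=(\tau_{s}u)(t)=u(s+t)=v((s+t)+)$ for $t>0$, the first component of $T^{\rho}(t+s)(g(0-),g)$. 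This gives $T^{\rho}(t+s)=T^{\rho}(t)T^{\rho}(s)$ for $t,s>0$, the cases $t=0$ or $s=0$ being immediate from $T^{\rho}(0)=\mathrm{id}$; along the way one also sees $T^{\rho}(t)[D_{\rho}]\subseteq D_{\rho}$.

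Strong continuity at $0$ is then soft. The first component satisfies $T_{1}^{\rho}(t)(g(0-),g)=v(t+)=u(t)\to u(0)=g(0-)$ as $t\to0+$ by continuity of $u$. For the second component I would note that $(\tau_{t})_{t\geq0}$ is a strongly continuous semigroup on $L_{2,\rho}(\R;H)$: the substitution $r=t+s$ in the defining integral gives $\|\tau_{t}f\|_{\rho}=\e^{\rho t}\|f\|_{\rho}$, so the $\tau_{t}$ are uniformly bounded near $t=0$, while $\tau_{t}f\to f$ in $L_{2,\rho}(\R;H)$ is clear for $f\in C_{c}(\R;H)$, and the general case follows by density. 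Combining this with the boundedness of $\chi_{\R_{\leq0}}(\m)$ and the identity $\chi_{\R_{\leq0}}(\m)u=g$ from the first paragraph gives $T_{2}^{\rho}(t)(g(0-),g)=\chi_{\R_{\leq0}}(\m)\tau_{t}u\to\chi_{\R_{\leq0}}(\m)u=g$ in $L_{2,\rho}(\R;H)$ as $t\to0+$. Hence $T^{\rho}(t)(g(0-),g)\to(g(0-),g)$ in $H\times L_{2,\rho}(\R;H)$.

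The step I expect to be the real obstacle is the middle one: one has to keep scrupulous track, through \prettyref{thm:invariance}, of which function plays the role of ``$v$'' and which that of ``$u$'' after a single time step, and then chase the one-sided trace $v(t+)$ through the identities ``$w_{s}=\tau_{s}u$ on $\R_{>0}$'' and ``$\tau_{t}\tau_{s}=\tau_{t+s}$''. Once these identifications are fixed the computation is essentially forced; everything else reduces to continuity of $u$ and strong continuity of translation.
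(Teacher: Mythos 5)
Your proposal is correct and follows essentially the same route as the paper: the semigroup law is obtained by applying \prettyref{thm:invariance} at time $s$ and using $\tau_{t}\tau_{s}=\tau_{t+s}$, and strong continuity comes from the continuity of $u$ together with the strong continuity of translation in $L_{2,\rho}(\R;H)$. The only difference is that you spell out the identifications $v(t+)=u(t)$, $\chi_{\R_{\leq0}}(\m)u=g$ and $T^{\rho}(t)[D_{\rho}]\subseteq D_{\rho}$, which the paper's proof uses implicitly.
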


\begin{proof}
Let $g\in\His_{\rho}$ and $t,s\geq0.$ We set $v\coloneqq S_{\rho}\left(\Gamma_{\rho}g\delta_{0}-K_{\rho}g\right)$
and $u\coloneqq v+g.$ By \prettyref{thm:invariance} we have that
\[
\chi_{\R_{\geq0}}(\m)\tau_{s}u=S_{\rho}\left(\Gamma_{\rho}\left(\chi_{\R_{\leq0}}(\m)\tau_{s}u\right)\delta_{0}-K_{\rho}\left(\chi_{\R_{\leq0}}(\m)\tau_{s}u\right)\right).
\]
and thus, 
\begin{align*}
T^{\rho}(t)T^{\rho}(s)(g(0-),g) & =T^{\rho}(t)\left(u(s),\chi_{\R_{\leq0}}(\m)\tau_{s}u\right)\\
 & =(u(t+s),\chi_{\R_{\leq0}}(\m)\tau_{t}\tau_{s}u)\\
 & =T^{\rho}(t+s)(g(0-),g).
\end{align*}
Moreover, 
\begin{align*}
 & \|T^{\rho}(t)(g(0-),g)-(g(0-),g)\|_{H\times L_{2,\rho}(\R;H)}^{2}\\
 & =\|u(t)-g(0-)\|_{H}^{2}+\|\chi_{\R_{\leq0}}(\m)\tau_{t}u-g\|_{L_{2,\rho}(\R;H)}\\
 & =\|u(t)-u(0)\|_{H}^{2}+\|\chi_{\R_{\leq0}}(\m)(\tau_{t}u-u)\|_{L_{2,\rho}(\R;H)}\\
 & \leq\|u(t)-u(0)\|_{H}^{2}+\|\tau_{t}u-u\|_{L_{2,\rho}(\R;H)}\to0\quad(t\to0+),
\end{align*}
by the continuity of $u$ and the strong continuity of translation
in $L_{2,\rho}.$ 
\end{proof}
In the rest of this section we show a characterisation result, when
$T^{\rho}$ can be extended to a $C_{0}$-semigroup on the space 
\[
X_{\rho}^{\mu}\coloneqq\overline{D_{\rho}}^{H\times L_{2,\mu}(\R;H)}\subseteq H\times L_{2,\mu}(\R;H)
\]
for some $\mu\leq\rho.$ We first prove a result that is suffices
to consider the family $T_{1}^{\rho}$.
\begin{prop}
\label{prop:T1_suffices}Let $\rho>s_{0}(M,A)$ and $\mu\leq\rho.$
Assume that 
\[
T_{1}^{\rho}:D_{\rho}\subseteq X_{\rho}^{\mu}\to C_{\omega}(\R_{\geq0};H)
\]
is bounded for some $\omega\in\R.$ Then 
\[
T_{2}^{\rho}:D_{\rho}\subseteq X_{\rho}^{\mu}\to C_{\max\{\mu,\omega\}+\varepsilon}(\R_{\geq0};L_{2,\mu}(\R;H))
\]
is bounded for each $\varepsilon>0.$ 
\end{prop}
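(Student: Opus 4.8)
The plan is to derive the $L_{2,\mu}$-bound on $T_2^\rho$ directly from the pointwise-in-time bound on $T_1^\rho$ that the hypothesis supplies, combined with the strong continuity of the translation group on $L_{2,\rho}(\R;H)$. Fix $g\in\His_\rho$ and, as in \prettyref{thm:invariance}, write $v\coloneqq S_\rho(\Gamma_\rho g\delta_0-K_\rho g)$ and $u\coloneqq v+g$, so that $\spt v\subseteq\R_{\geq0}$, $\spt g\subseteq\R_{\leq0}$ and $u\in H_\rho^1(\R;H)\hookrightarrow C_\rho(\R;H)$ by \prettyref{prop:td}~(e). Since $g$ vanishes on $\R_{>0}$, we have $v=u$ there, hence $v(t+)=u(t)$ for all $t\geq0$ when $u$ is taken continuous; that is, $T_1^\rho(t)(g(0-),g)=u(t)$. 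The hypothesis therefore provides a constant $C_1\geq0$, independent of $g$, with $\|u(r)\|\leq C_1\e^{\omega r}\|(g(0-),g)\|_{X_\rho^\mu}$ for all $r\geq0$.

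First I would record the elementary weight inequality: for $\mu\leq\rho$ and $s\leq0$ one has $\e^{-2\mu s}\leq\e^{-2\rho s}$, so that $\chi_{\R_{\leq0}}(\m)\colon L_{2,\rho}(\R;H)\to L_{2,\mu}(\R;H)$ is a contraction (in particular $g\in L_{2,\mu}(\R;H)$ with $\|g\|_{L_{2,\mu}(\R;H)}\leq\|g\|_{L_{2,\rho}(\R;H)}$, which is anyway presupposed by $D_\rho\subseteq X_\rho^\mu$). Since $T_2^\rho(t)(g(0-),g)=\chi_{\R_{\leq0}}(\m)\tau_t u$ with $\tau_t u\in L_{2,\rho}(\R;H)$, this contraction property together with the strong continuity of translation in $L_{2,\rho}(\R;H)$ gives, for every $t_0\geq0$,
\[
\|T_2^\rho(t)(g(0-),g)-T_2^\rho(t_0)(g(0-),g)\|_{L_{2,\mu}(\R;H)}\leq\|\tau_t u-\tau_{t_0}u\|_{L_{2,\rho}(\R;H)}\longrightarrow0\quad(t\to t_0),
\]
so $t\mapsto T_2^\rho(t)(g(0-),g)$ is continuous into $L_{2,\mu}(\R;H)$.

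For the quantitative estimate I would compute, for $t\geq0$, by the substitution $r=t+s$,
\[
\|T_2^\rho(t)(g(0-),g)\|_{L_{2,\mu}(\R;H)}^2=\int_{-\infty}^0\|u(t+s)\|^2\e^{-2\mu s}\d s=\e^{2\mu t}\int_{-\infty}^t\|u(r)\|^2\e^{-2\mu r}\d r,
\]
and split the last integral at $0$. On $\R_{<0}$ we have $u=g$, hence $\int_{-\infty}^0\|u(r)\|^2\e^{-2\mu r}\d r\leq\|g\|_{L_{2,\mu}(\R;H)}^2\leq\|(g(0-),g)\|_{X_\rho^\mu}^2$; on $[0,t]$ the bound on $u$ gives $\int_0^t\|u(r)\|^2\e^{-2\mu r}\d r\leq C_1^2\|(g(0-),g)\|_{X_\rho^\mu}^2\int_0^t\e^{2(\omega-\mu)r}\d r$. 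A short case distinction ($\omega-\mu$ positive, zero, or negative) shows that for every $\varepsilon>0$ there is $C_\varepsilon\geq0$ with $\int_0^t\e^{2(\omega-\mu)r}\d r\leq C_\varepsilon\e^{2(\max\{\omega-\mu,0\}+\varepsilon)t}$ for all $t\geq0$, the only use of $\varepsilon$ being to absorb the linear factor $t$ that occurs when $\omega=\mu$. Using $\mu+\max\{\omega-\mu,0\}=\max\{\mu,\omega\}$ and combining the pieces yields
\[
\|T_2^\rho(t)(g(0-),g)\|_{L_{2,\mu}(\R;H)}^2\leq(1+C_1^2C_\varepsilon)\,\e^{2(\max\{\mu,\omega\}+\varepsilon)t}\,\|(g(0-),g)\|_{X_\rho^\mu}^2,
\]
and taking the supremum over $t\geq0$ of $\e^{-(\max\{\mu,\omega\}+\varepsilon)t}\|T_2^\rho(t)(g(0-),g)\|_{L_{2,\mu}(\R;H)}$, together with the continuity obtained in the previous step, is precisely the claimed boundedness of $T_2^\rho\colon D_\rho\subseteq X_\rho^\mu\to C_{\max\{\mu,\omega\}+\varepsilon}(\R_{\geq0};L_{2,\mu}(\R;H))$.

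I do not expect a genuine obstacle here: the substitution and the exponential-integral case distinction are routine, and the single point that wants care is keeping the two exponential weights apart. All we have on $u$ near $-\infty$ is $L_{2,\rho}$-control (from $u\in H_\rho^1(\R;H)$), and the key observation is that restriction to $\R_{\leq0}$ turns this into honest $L_{2,\mu}$-control for $\mu\leq\rho$; this is exactly what makes $\chi_{\R_{\leq0}}(\m)\colon L_{2,\rho}(\R;H)\to L_{2,\mu}(\R;H)$ bounded and is used both in the continuity argument and in the estimate of the history part.
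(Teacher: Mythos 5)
Your proposal is correct and follows essentially the same argument as the paper: after the change of variables $r=t+s$ your split of the integral at $0$ is exactly the paper's decomposition of $T_{2}^{\rho}(t)(g(0-),g)$ into the translated history $g(t+\cdot)$ and the part controlled by the $C_{\omega}$-bound on $T_{1}^{\rho}$, with the factor $t$ arising at $\omega=\mu$ absorbed into the $\varepsilon$ in the same way. The only addition is your explicit verification that $t\mapsto T_{2}^{\rho}(t)(g(0-),g)$ is continuous into $L_{2,\mu}(\R;H)$ via the contraction $\chi_{\R_{\leq0}}(\m)\colon L_{2,\rho}(\R;H)\to L_{2,\mu}(\R;H)$, a point the paper leaves implicit.
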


\begin{proof}
Let $\varepsilon>0$ and $g\in\His_{\rho}.$ We note that 
\[
\left(T_{2}^{\rho}(t)(g(0-),g)\right)(s)=\begin{cases}
g(t+s) & \text{ if }s<-t,\\
T_{1}^{\rho}(t+s)(g(0-),g) & \text{ if }-t\leq s\leq0
\end{cases}\quad(t\geq0,s\leq0).
\]
Hence, we may estimate for $\varepsilon>0$
\begin{align*}
\|T_{2}^{\rho}(t)(g(0-),g)\|_{L_{2,\mu}(\R;H)}^{2} & =\int_{-\infty}^{-t}\|g(t+s)\|^{2}\e^{-2\mu s}\d s+\int_{-t}^{0}\|T_{1}^{\rho}(t+s)(g(0-),g)\|^{2}\e^{-2\mu s}\d s\\
 & \leq\int_{-\infty}^{0}\|g(s)\|^{2}\e^{-2\mu s}\d s\;\e^{2\mu t}+M\|(g(0-),g)\|_{X_{\rho}^{\mu}}^{2}\int_{-t}^{0}\e^{2\omega(t+s)}\e^{-2\mu s}\d s\\
 & =\|g\|_{L_{2,\mu}(\R;H)}^{2}\e^{2\mu t}+M\|(g(0-),g)\|_{X_{\rho}^{\mu}}^{2}\e^{2\omega t}\frac{1}{2(\omega-\mu)}(1-\e^{-2(\omega-\mu)t})\\
 & \leq\|g\|_{L_{2,\mu}(\R;H)}^{2}\e^{2\mu t}+M\|(g(0-),g)\|_{X_{\rho}^{\mu}}^{2}t\e^{2\max\{\mu,\omega\}t}\\
 & \leq C\e^{2(\max\{\mu,\omega)+\varepsilon)t}\|(g(0-),g)\|_{X_{\rho}^{\mu}}^{2}
\end{align*}
for each $g\in\His_{\rho},$ where $M$ denotes the norm of $T_{1}^{\rho}$
and $C\coloneqq\max_{t\geq0}(1+Mt)\e^{-2\varepsilon t}.$
\end{proof}
In order to extend $T_{1}^{\rho}$ to $X_{\rho}^{\mu}$ we make use
of the Widder-Arendt-Theorem.
\begin{thm}[{Widder-Arendt, \cite{Arendt1987},\cite[Theorem 2.2.3]{ABHN_2011}}]
\label{thm:WA} Let $H$ be a Hilbert space and $r\in C^{\infty}(\R_{>0};H)$
such that 
\[
M\coloneqq\sup_{\lambda>0,k\in\N}\frac{\lambda^{k+1}}{k!}\|r^{(k)}(\lambda)\|<\infty.
\]
Then there is $f\in L_{\infty}(\R_{\geq0};H)$ such that $\|f\|_{\infty}=M$
and 
\[
r(\lambda)=\int_{0}^{\infty}\e^{-\lambda t}f(t)\d t\quad(\lambda>0).
\]
\end{thm}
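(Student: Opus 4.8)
The plan is to deduce this classical vector-valued Widder theorem from the scalar case by a Hahn--Banach reduction, exploiting that Hilbert spaces have the Radon--Nikodym property. The inequality $\|f\|_{\infty}\geq M$ is the easy half and I would handle it at the very end; the substance is producing the representing function $f$.

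First I would reduce to scalars. For $\phi\in H'$ the function $r_{\phi}:\lambda\mapsto\langle\phi,r(\lambda)\rangle$ lies in $C^{\infty}(\R_{>0};\C)$ and satisfies the Widder bound $\sup_{\lambda>0,k\in\N}\frac{\lambda^{k+1}}{k!}|r_{\phi}^{(k)}(\lambda)|\leq\|\phi\|M$. The scalar Widder theorem then produces a unique $f_{\phi}\in L_{\infty}(\R_{\geq0})$ with $\|f_{\phi}\|_{\infty}\leq\|\phi\|M$ and $r_{\phi}(\lambda)=\int_{0}^{\infty}\e^{-\lambda t}f_{\phi}(t)\d t$ for all $\lambda>0$, uniqueness being a consequence of the injectivity of the Laplace transform on $L_{\infty}(\R_{\geq0})$. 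The scalar theorem itself I would prove via the Post--Widder inversion formula: the approximants $f_{n}(t)\coloneqq\frac{(-1)^{n}}{n!}(n/t)^{n+1}r_{\phi}^{(n)}(n/t)$ are uniformly bounded by $\|\phi\|M$ thanks to the hypothesis, one checks by a change of variables and repeated integration by parts (controlling the boundary terms through the growth estimates) that $\int_{0}^{\infty}\e^{-\lambda t}f_{n}(t)\d t\to r_{\phi}(\lambda)$, and any weak-$\ast$ cluster point of $(f_{n})_{n}$ in $L_{\infty}(\R_{\geq0})$ is a valid $f_{\phi}$. This convergence analysis is the genuinely hard classical input.

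Next I would glue the $f_{\phi}$ into a single $H$-valued function. By uniqueness, $\phi\mapsto f_{\phi}$ is linear, and it is bounded from $H'$ into $L_{\infty}(\R_{\geq0})=L_{1}(\R_{\geq0})'$ with norm $\leq M$. Passing to the adjoint and using $H''=H$ (reflexivity of Hilbert space), this yields $S\in L(L_{1}(\R_{\geq0}),H)$ with $\|S\|\leq M$ and $\langle\phi,S\psi\rangle=\int_{0}^{\infty}f_{\phi}(t)\psi(t)\d t$ for all $\phi\in H'$, $\psi\in L_{1}(\R_{\geq0})$. Since $H$ has the Radon--Nikodym property, $S$ is representable: there is $f\in L_{\infty}(\R_{\geq0};H)$ with $S\psi=\int_{0}^{\infty}f(t)\psi(t)\d t$ and $\|f\|_{\infty}=\|S\|\leq M$ (on a finite subinterval this is the classical representation of operators on $L_{1}$ into an RNP space, and one patches consistently over $\R_{\geq0}=\bigcup_{n}[0,n]$). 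Testing against an arbitrary $\phi\in H'$ then gives $\langle\phi,\int_{0}^{\infty}\e^{-\lambda t}f(t)\d t\rangle=\int_{0}^{\infty}\e^{-\lambda t}f_{\phi}(t)\d t=\langle\phi,r(\lambda)\rangle$, whence $r(\lambda)=\int_{0}^{\infty}\e^{-\lambda t}f(t)\d t$.

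Finally, for $\|f\|_{\infty}\geq M$: differentiating under the integral gives $r^{(k)}(\lambda)=\int_{0}^{\infty}(-t)^{k}\e^{-\lambda t}f(t)\d t$, so $\frac{\lambda^{k+1}}{k!}\|r^{(k)}(\lambda)\|\leq\|f\|_{\infty}\cdot\frac{\lambda^{k+1}}{k!}\int_{0}^{\infty}t^{k}\e^{-\lambda t}\d t=\|f\|_{\infty}$ for all $\lambda>0$ and $k\in\N$, whence $M\leq\|f\|_{\infty}$ and thus $\|f\|_{\infty}=M$. The two steps I expect to be the real obstacles are the scalar Widder theorem (the convergence of the Post--Widder approximants) and the representability of operators $L_{1}\to H$, which is precisely where the Radon--Nikodym property of Hilbert spaces enters; both are standard but lengthy, which is why in the paper it is legitimate to simply invoke \cite{Arendt1987} and \cite[Theorem 2.2.3]{ABHN_2011}.
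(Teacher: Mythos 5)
The paper does not prove this theorem at all: it is imported verbatim from the literature (Arendt \cite{Arendt1987}, ABHN \cite[Theorem 2.2.3]{ABHN_2011}), and the remark following it only records the connection with the Radon--Nikodym property. Your outline is a correct reconstruction of exactly that cited argument: Hahn--Banach reduction to the scalar Widder theorem, gluing the scalar representers $f_{\phi}$ into an operator $S\in L(L_{1}(\R_{\geq0});H)$ via the adjoint and reflexivity, and then invoking the RNP of $H$ to represent $S$ by an $H$-valued $L_{\infty}$ density; the final norm identity $\|f\|_{\infty}=M$ is handled correctly in both directions. The only difference from Arendt's own proof is cosmetic: he first produces, in an arbitrary Banach space, a Lipschitz primitive $F$ with $r(\lambda)=\lambda\int_{0}^{\infty}\e^{-\lambda t}\d F(t)$ and then uses RNP in the form ``$H$-valued Lipschitz functions are a.e.\ differentiable'', whereas you use the equivalent form ``operators $L_{1}\to H$ are representable''; both are standard and interchangeable. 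The two ingredients you leave unproved (the convergence of the Post--Widder approximants in the scalar case, and the representability theorem from Diestel--Uhl) are precisely the classical results the paper itself cites, so deferring them is consistent with the paper's treatment and leaves no gap beyond what the citation already absorbs.
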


\begin{rem}
The latter Theorem was first proved by Widder in the scalar-valued
case \cite{Widder1934} and then generalised by Arendt to the vector-valued
case in \cite{Arendt1987}. It is noteworthy that the latter Theorem
is also true in Banach spaces satisfying the Radon-Nikodym property
(see \cite[Chapter III]{DiestelUhl}) and, in fact, this property
of $X$ is equivalent to the validity of \prettyref{thm:WA}, see
\cite[Theorem 1.4]{Arendt1987}.
\end{rem}

We now identify the function $r$ mentioned in \prettyref{thm:WA}
within the presented framework.
\begin{prop}
\label{prop:function_r}Let $\rho>s_{0}(M,A)$ and $g\in\His_{\rho}.$
We set $v\coloneqq S_{\rho}\left(\Gamma_{\rho}g\delta_{0}-K_{\rho}g\right)\in L_{2,\rho}(\R;H)$
and 
\[
r_{g}(\lambda)\coloneqq\sqrt{2\pi}(\mathcal{L}_{\lambda}v)(0)\quad(\lambda>\rho).
\]
Then $r_{g}\in C^{\infty}(\R_{>\rho};H)$. Moreover, 
\[
r_{g}(\lambda)=(\lambda M(\lambda)+A)^{-1}\left(\left(M(\partial_{t,\rho})g\right)(0-)-\lambda\sqrt{2\pi}\mathcal{L}_{\lambda}(\chi_{\mathbb{R}_{\geq0}}(\m)M(\partial_{t,\rho})g)(0)\right)\quad(\lambda>\rho).
\]
\end{prop}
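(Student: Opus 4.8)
The plan is to compute $r_g(\lambda)=\sqrt{2\pi}(\mathcal{L}_\lambda v)(0)$ directly from the formula $v=S_\rho(\Gamma_\rho g\delta_0-K_\rho g)$, using that $S_\rho$ is realised as the material law operator $N(\partial_{t,\mu})$ with $N(z)=(zM(z)+A)^{-1}$ and that this is independent of the weight by \prettyref{thm:well_posed}. First I would justify that $\mathcal{L}_\lambda v$ makes sense for every $\lambda>\rho$: since $\spt v\subseteq\R_{\geq 0}$ (established in the preceding proposition) and $v\in L_{2,\rho}(\R;H)$, causality plus the independence of $\rho$ gives $v\in L_{2,\lambda}(\R;H)$ for all $\lambda\geq\rho$, and one checks $v\in L_{1,\lambda}$ near $0$ so that $(\mathcal{L}_\lambda v)(0)=\frac{1}{\sqrt{2\pi}}\int_0^\infty \e^{-\lambda s}v(s)\,\d s$ is well-defined; smoothness in $\lambda$ then follows by differentiating under the integral sign, which is legitimate because each derivative in $\lambda$ produces a factor $s^k\e^{-\lambda s}$ that is dominated on $(\rho,\infty)$ locally uniformly. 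This yields $r_g\in C^\infty(\R_{>\rho};H)$.

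Next I would identify the value. The point evaluation at $t=0$ of the Fourier--Laplace transform is exactly the object appearing in the functional calculus: for any material law operator $N(\partial_{t,\lambda})$ one has, at the level of the unitarily transformed picture, $(\mathcal{L}_\lambda N(\partial_{t,\lambda})w)(0)=N(\lambda)(\mathcal{L}_\lambda w)(0)$ provided the relevant transforms are continuous at $0$. Applying this with $N(z)=(zM(z)+A)^{-1}$ and $w=\Gamma_\rho g\delta_0-K_\rho g$ gives
\[
r_g(\lambda)=(\lambda M(\lambda)+A)^{-1}\sqrt{2\pi}\bigl(\mathcal{L}_\lambda(\Gamma_\rho g\,\delta_0-K_\rho g)\bigr)(0).
\]
Now I compute the two transforms. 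For the Dirac term, $\sqrt{2\pi}(\mathcal{L}_\lambda\delta_0)(0)=1$, so it contributes $\Gamma_\rho g$, and by \prettyref{rem:Gamma} this equals $(M(\partial_{t,\rho})g)(0-)-(M(\partial_{t,\rho})g)(0+)$. For $K_\rho g=P_0\partial_{t,\rho}M(\partial_{t,\rho})g=\partial_{t,\rho}\chi_{\R_{\geq0}}(\m)M(\partial_{t,\rho})g-(M(\partial_{t,\rho})g)(0+)\delta_0$ (by the definition of $P_0$), I would use that $\sqrt{2\pi}(\mathcal{L}_\lambda\partial_{t,\lambda}h)(0)=\lambda\sqrt{2\pi}(\mathcal{L}_\lambda h)(0)$ — the boundary term at $0$ vanishes because $\chi_{\R_{\geq0}}(\m)M(\partial_{t,\rho})g$ is supported in $\R_{\geq0}$ with left limit $0$ — to get $\sqrt{2\pi}(\mathcal{L}_\lambda K_\rho g)(0)=\lambda\sqrt{2\pi}\mathcal{L}_\lambda(\chi_{\R_{\geq0}}(\m)M(\partial_{t,\rho})g)(0)-(M(\partial_{t,\rho})g)(0+)$. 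Subtracting, the two occurrences of $(M(\partial_{t,\rho})g)(0+)$ cancel, leaving $(M(\partial_{t,\rho})g)(0-)-\lambda\sqrt{2\pi}\mathcal{L}_\lambda(\chi_{\R_{\geq0}}(\m)M(\partial_{t,\rho})g)(0)$, which is exactly the claimed bracket.

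The main obstacle I anticipate is making rigorous the identity $(\mathcal{L}_\lambda N(\partial_{t,\lambda})w)(0)=N(\lambda)(\mathcal{L}_\lambda w)(0)$ and the companion formula $(\mathcal{L}_\lambda\partial_{t,\lambda}h)(0)=\lambda(\mathcal{L}_\lambda h)(0)$ when $w$ and $h$ live only in the extrapolation space $H_\rho^{-1}(\R;H)$ rather than in $L_{2,\rho}$, so that point-evaluation of the transform at $0$ is a priori not literally defined. I would handle this by first checking that $v=S_\rho w\in L_{2,\rho}(\R;H)$ has support in $\R_{\geq0}$, so $v$ itself has an honest Laplace transform and $r_g$ is unambiguously defined as the proposition states; then, rather than transforming the singular pieces separately, I would argue at the level of $v$ using that on $L_{2,\lambda}$ we have $\partial_{t,\lambda}v+$(lower order)$=\Gamma_\rho g\delta_0-K_\rho g$ componentwise, pair with the test function $s\mapsto \e^{-\lambda s}$ (extended suitably, using that $v$ vanishes on $\R_{<0}$), and read off the algebraic relation $(\lambda M(\lambda)+A)r_g(\lambda)=\sqrt{2\pi}\mathcal{L}_\lambda(\Gamma_\rho g\delta_0-K_\rho g)(0)$ directly. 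Because $\lambda M(\lambda)+A$ is boundedly invertible for $\lambda>\rho>s_0(M,A)$ by well-posedness, this inverts to the stated formula, and the computation of the right-hand transform proceeds as above.
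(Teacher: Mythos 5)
Your proposal arrives at the correct identity, and the final algebra (the cancellation of $(M(\partial_{t,\rho})g)(0+)$ via \prettyref{rem:Gamma}) coincides with the paper's, but the route is genuinely different, and the first version you describe -- Laplace-transforming $\delta_{0}$ and $K_{\rho}g$ separately and using $(\mathcal{L}_{\lambda}N(\partial_{t,\lambda})w)(0)=N(\lambda)(\mathcal{L}_{\lambda}w)(0)$ for $w=\Gamma_{\rho}g\delta_{0}-K_{\rho}g$ -- is, as you note yourself, not literally defined, since $w$ only lives in $H_{\rho}^{-1}(\R;H)$. The paper sidesteps this with one device you did not use: it applies $\partial_{t,\rho}^{-1}$ to $v$ and to the right-hand side, so that $\partial_{t,\rho}^{-1}\delta_{0}=\chi_{\R_{\geq0}}$ and $\partial_{t,\rho}^{-1}K_{\rho}g=\chi_{\R_{\geq0}}(\m)M(\partial_{t,\rho})g-\left(M(\partial_{t,\rho})g\right)(0+)\chi_{\R_{\geq0}}$ are honest $L_{2}$-functions supported on $\R_{\geq0}$; it then switches the weight from $\rho$ to $\lambda$ by the independence statement of \prettyref{thm:well_posed}, evaluates the Fourier--Laplace transform at $0$ using $S_{\lambda}=N(\partial_{t,\lambda})$ with $N(z)=(zM(z)+A)^{-1}$, and finally multiplies by $\lambda$ to undo $\partial_{t,\lambda}^{-1}$; only Laplace transforms of $L_{2}$-functions with one-sided support are ever evaluated. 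Your fallback -- pairing the equation for $v$ with a suitably cut-off, $\dom(A^{\ast})$-valued exponential test function and reading off $(\lambda M(\lambda)+A)r_{g}(\lambda)$ before inverting -- can be completed and is a legitimate alternative, but it leaves more to check than your sketch records: admissibility of the test function in the weighted duality (the weight $\e^{-2\rho s}$ forces the test function to be $\e^{-(\lambda-2\rho)s}$ up to a left cut-off), the fact that the equation only holds for the closure of $\partial_{t,\rho}M(\partial_{t,\rho})+A$, so $A$ must be shifted onto $A^{\ast}$ applied to the test function, and the identity $\int_{0}^{\infty}\e^{-\lambda s}\left(M(\partial_{t,\rho})v\right)(s)\d s=M(\lambda)\int_{0}^{\infty}\e^{-\lambda s}v(s)\d s$, which again rests on causality and weight-independence. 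The smoothness part of your argument (dominated differentiation under the integral, using $\spt v\subseteq\R_{\geq0}$ and $v\in L_{2,\rho}(\R;H)$) is exactly the paper's.
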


\begin{proof}
We note that 
\[
(\mathcal{L}_{\lambda}v)(0)=\frac{1}{\sqrt{2\pi}}\int_{0}^{\infty}\e^{-\lambda t}v(t)\d t\quad(\lambda>\rho)
\]
and hence, the regularity of $r_{g}$ follows. Moreover, 
\begin{align*}
\partial_{\lambda,t}^{-1}v & =\partial_{t,\rho}^{-1}v\\
 & =S_{\rho}\left(\partial_{t,\rho}^{-1}\Gamma_{\rho}g\delta_{0}-\partial_{t,\rho}^{-1}K_{\rho}g\right)\\
 & =S_{\rho}\left(\Gamma_{\rho}g\chi_{\R_{\geq0}}-\chi_{\R_{\geq0}}(\m)M(\partial_{t,\rho})g+(M(\partial_{t,\rho})g)(0+)\chi_{\R_{\geq0}}\right)\\
 & =S_{\lambda}\left(\Gamma_{\rho}g\chi_{\R_{\geq0}}-\chi_{\R_{\geq0}}(\m)M(\partial_{t,\rho})g+(M(\partial_{t,\rho})g)(0+)\chi_{\R_{\geq0}}\right),
\end{align*}
where we have used the independence of $\rho$ stated in \prettyref{thm:well_posed}.
Hence, 
\begin{align*}
r_{g}(\lambda) & =\sqrt{2\pi}(\mathcal{L}_{\lambda}v)(0)\\
 & =\lambda\sqrt{2\pi}(\mathcal{L}_{\lambda}\partial_{t,\lambda}^{-1}v)(0)\\
 & =\lambda\sqrt{2\pi}\left(\lambda M(\lambda)+A\right)^{-1}\left(\frac{1}{\lambda\sqrt{2\pi}}\Gamma_{\rho}g-\mathcal{L}_{\lambda}\left(\chi_{\R_{\geq0}}(\m)M(\partial_{t,\rho})g\right)(0)+\frac{1}{\lambda\sqrt{2\pi}}\left(M(\partial_{t,\rho})g\right)(0+)\right)\\
 & =\left(\lambda M(\lambda)+A\right)^{-1}\left(\Gamma_{\rho}g+\left(M(\partial_{t,\rho})g\right)(0+)-\lambda\sqrt{2\pi}\mathcal{L}_{\lambda}\left(\chi_{\R_{\geq0}}(\m)M(\partial_{t,\rho})g\right)(0)\right)\\
 & =\left(\lambda M(\lambda)+A\right)^{-1}\left(\left(M(\partial_{t,\rho})g\right)(0-)-\lambda\sqrt{2\pi}\mathcal{L}_{\lambda}\left(\chi_{\R_{\geq0}}(\m)M(\partial_{t,\rho})g\right)(0)\right)
\end{align*}
for each $\lambda>\rho,$ where we have used the formula for $\Gamma_{\rho}$
stated in \prettyref{rem:Gamma}. 
\end{proof}
With these preparations at hand, we can now state and prove the main
result of this article.
\begin{thm}
\label{thm:HY}Let $\rho>s_{0}(M,A)$ and $T^{\rho}$ be the semigroup
on $D_{\rho}$ associated with $(M,A).$ Moreover, for $g\in\His_{\rho}$
we set 
\[
r_{g}(\lambda)\coloneqq\left(\lambda M(\lambda)+A\right)^{-1}\left(\left(M(\partial_{t,\rho})g\right)(0-)-\lambda\sqrt{2\pi}\mathcal{L}_{\lambda}\left(\chi_{\R_{\geq0}}(\m)M(\partial_{t,\rho})g\right)(0)\right)\quad(\lambda>\rho).
\]
For $\mu\leq\rho$ the following statements are equivalent:

\begin{enumerate}[(i)]

\item $T^{\rho}$ can be extended to a $C_{0}$-semigroup on $X_{\rho}^{\mu}=\overline{D_{\rho}}^{H\times L_{2,\mu}(\R;H)}\subseteq H\times L_{2,\mu}(\R;H).$

\item There exists $M\geq1$ and $\omega\geq\rho$ such that 
\[
\frac{(\lambda-\omega)^{k+1}}{k!}\|r_{g}^{(k)}(\lambda)\|\leq M\left(\|g(0-)\|_{H}+\|g\|_{L_{2,\mu}(\R;H)}\right)
\]
for each $\lambda>\omega,k\in\N$ and $g\in\His_{\rho}.$

\end{enumerate}

In this case 
\begin{align*}
T_{1}^{\rho} & :X_{\rho}^{\mu}\to C_{\omega}(\R_{\geq0};H),\\
T_{2}^{\rho} & :X_{\rho}^{\mu}\to C_{\omega+\varepsilon}(\R_{\geq0};L_{2,\mu}(\R;H))
\end{align*}
are bounded for each $\varepsilon>0.$ 
\end{thm}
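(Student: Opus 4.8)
The plan is to prove the equivalence by matching up the two conditions through the Widder--Arendt Theorem (\prettyref{thm:WA}), applied with the Hilbert space $H$ on the first component and with $L_{2,\mu}(\R;H)$ on the second. The key point is that for $g\in\His_\rho$, $t\geq0$, one has $T_1^\rho(t)(g(0-),g)=v(t+)$ where $v=S_\rho(\Gamma_\rho g\delta_0-K_\rho g)$, and by \prettyref{prop:function_r} the Laplace transform of (the continuous representer of) $v$ evaluated along $\R_{>\rho}$ is exactly $r_g$. So the orbit $t\mapsto T_1^\rho(t)(g(0-),g)$, viewed as an $H$-valued $L_\infty$-function on $\R_{\geq0}$, is the Widder--Arendt pre-image of $r_g$, after shifting by $\omega$: setting $\widetilde r_g(\lambda)\coloneqq r_g(\lambda+\omega)$, the quantity in (ii) is precisely $\sup_{\lambda>0,k\in\N}\tfrac{\lambda^{k+1}}{k!}\|\widetilde r_g^{(k)}(\lambda)\|$, which by \prettyref{thm:WA} equals $\|e^{-\omega\cdot}\,T_1^\rho(\cdot)(g(0-),g)\|_{L_\infty(\R_{\geq0};H)}$.

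For the direction (ii)$\Rightarrow$(i), I would first use the above identification to get, for every $g\in\His_\rho$,
\[
\sup_{t\geq0}\e^{-\omega t}\|T_1^\rho(t)(g(0-),g)\|_H\leq M\bigl(\|g(0-)\|_H+\|g\|_{L_{2,\mu}(\R;H)}\bigr)=M\|(g(0-),g)\|_{X_\rho^\mu},
\]
which says $T_1^\rho:D_\rho\subseteq X_\rho^\mu\to C_\omega(\R_{\geq0};H)$ is bounded (using that $v$, hence $T_1^\rho(\cdot)(g(0-),g)$, is continuous, so the $L_\infty$-bound is a genuine sup). Then \prettyref{prop:T1_suffices} upgrades this to boundedness of $T_2^\rho:D_\rho\subseteq X_\rho^\mu\to C_{\max\{\mu,\omega\}+\varepsilon}(\R_{\geq0};L_{2,\mu}(\R;H))=C_{\omega+\varepsilon}(\ldots)$ (as $\omega\geq\rho\geq\mu$). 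In particular, for each fixed $t\geq0$, $T^\rho(t):D_\rho\subseteq X_\rho^\mu\to X_\rho^\mu$ is bounded with $\|T^\rho(t)\|\leq \text{const}\cdot\e^{(\omega+\varepsilon)t}$, so it extends uniquely by density to $X_\rho^\mu$; the semigroup law and strong continuity pass to the closure because $\{T^\rho(t)\}_{t\geq0}$ is locally (in $t$) uniformly bounded on $D_\rho$ and each orbit $t\mapsto T^\rho(t)x$ is continuous for $x\in D_\rho$ — a standard $3\varepsilon$-argument gives strong continuity of the extension on all of $X_\rho^\mu$. One must also check the extended operators still map into $C_\omega$ resp.\ $C_{\omega+\varepsilon}$, which again follows by taking $D_\rho$-approximants and using the uniform growth bound.

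For (i)$\Rightarrow$(ii), assume $T^\rho$ extends to a $C_0$-semigroup on $X_\rho^\mu$; then by the standard exponential-boundedness of $C_0$-semigroups there are $M\geq1$, $\omega'\in\R$ with $\|T^\rho(t)\|_{L(X_\rho^\mu)}\leq M\e^{\omega' t}$, and replacing $\omega'$ by $\max\{\omega',\rho\}$ we may take $\omega\geq\rho$. Since the first component of $T^\rho(t)x$ is controlled by $\|T^\rho(t)x\|_{X_\rho^\mu}$, we get $\|T_1^\rho(t)(g(0-),g)\|_H\leq M\e^{\omega t}\|(g(0-),g)\|_{X_\rho^\mu}$ for all $g\in\His_\rho$, i.e.\ $\|\e^{-\omega\cdot}T_1^\rho(\cdot)(g(0-),g)\|_{L_\infty(\R_{\geq0};H)}\leq M\|(g(0-),g)\|_{X_\rho^\mu}$. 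Feeding this through the $\|f\|_\infty=M$-part of \prettyref{thm:WA} applied to $\widetilde r_g$ yields exactly the derivative bounds in (ii). The main obstacle I anticipate is purely bookkeeping rather than conceptual: making the passage from the $L_\infty$-norm to the genuine supremum rigorous (continuity of the representer of $v$, guaranteed by \prettyref{prop:td}(e) together with the structure of $v$ as in the proof of \prettyref{thm:invariance}), and carefully verifying that the density extension of the semigroup preserves the target spaces $C_\omega$ and $C_{\omega+\varepsilon}$ with the stated norm bounds — in particular handling the $\varepsilon$-loss coming from \prettyref{prop:T1_suffices} consistently on both sides of the equivalence.
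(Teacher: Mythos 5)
Your proposal is correct and follows essentially the same route as the paper: identify $r_g$ as the Laplace transform of the orbit $T_1^{\rho}(\cdot)(g(0-),g)$ via \prettyref{prop:function_r}, obtain the derivative bounds in (i)$\Rightarrow$(ii) from the semigroup's exponential bound by differentiating under the integral, and for (ii)$\Rightarrow$(i) apply \prettyref{thm:WA} to the shifted $r_g$, identify the $L_\infty$-representer with the continuous orbit by uniqueness of the Laplace transform, and then combine \prettyref{prop:T1_suffices} with a density extension. The only cosmetic difference is that you attribute the easy direction (from the $L_\infty$-bound on the orbit to the derivative estimates) to \prettyref{thm:WA}, whereas it is just the elementary converse computation that the paper carries out directly; this does not affect correctness.
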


\begin{proof}
(i) $\Rightarrow$(ii): Since $T^{\rho}:X_{\rho}^{\mu}\to X_{\rho}^{\mu}$
is a $C_{0}$-semigroup, we find $M\geq1$ and $\omega\geq\rho$ such
that 
\[
\|T^{\rho}(t)\|\leq M\e^{\omega t}\quad(t\geq0).
\]
In particular, we infer that 
\[
\|T_{1}^{\rho}(t)(g(0-),g)\|\leq M\e^{\omega t}\|(g(0-),g)\|_{X_{\rho}^{\mu}}\quad(t\geq0,g\in\His_{\rho}).
\]
Since $r_{g}(\lambda)=\sqrt{2\pi}\mathcal{L}_{\lambda}\left(T_{1}^{\rho}(\cdot)(g(0-),g)\right)(0)$
for $\lambda>\omega$ by \prettyref{prop:function_r}, we infer that
\begin{align*}
\|r_{g}^{(k)}(\lambda)\| & =\left\Vert \int_{0}^{\infty}\e^{-\lambda t}(-t)^{k}T_{1}^{\rho}(t)(g(0-),g)\d t\right\Vert \\
 & \leq\int_{0}^{\infty}\e^{-\lambda t}t^{k}M\e^{\omega t}\d t\|(g(0-),g)\|_{X_{\rho}^{\mu}}\\
 & =M\frac{k!}{(\lambda-\omega)^{k+1}}\|(g(0-),g)\|_{X_{\rho}^{\mu}},
\end{align*}
which shows (ii).\\
(ii)$\Rightarrow$(i): Let $g\in\His_{\rho}$ and define $\tilde{r}:\R_{>0}\to H$
by $\tilde{r}(\lambda)=r_{g}(\lambda+\omega)$ for $\lambda>0.$ Then
$\tilde{r}$ satisfies the assumptions of \prettyref{thm:WA} and
hence, there is $f\in L_{\infty}(\R_{\geq0};H)$ with $\|f\|_{\infty}\le M\left(\|g(0-)\|_{H}+\|g\|_{L_{2,\mu}(\R;H)}\right)$
such that 
\[
r_{g}(\lambda+\omega)=\int_{0}^{\infty}\e^{-\lambda t}f(t)\d t=\int_{0}^{\infty}\e^{-(\lambda+\omega)t}\e^{\omega t}f(t)\d t
\]
for each $\lambda>0.$ In particular, setting $v\coloneqq T_{1}^{\rho}(\cdot)(g(0-),g)$
we obtain 
\[
\int_{0}^{\infty}\e^{-\lambda t}v(t)\d t=r_{g}(\lambda)=\int_{0}^{\infty}\e^{-\lambda t}\e^{\omega t}f(t)\d t\quad(\lambda>\omega)
\]
and by analytic extension it follows that 
\[
\mathcal{L}_{\lambda}v=\mathcal{L}_{\lambda}(\e^{\omega\cdot}f)\quad(\lambda>\omega).
\]
Thus, $v=\e^{\omega\cdot}f$ and hence, 
\[
\|v(t)\|=\e^{\omega t}\|f(t)\|\leq M\e^{\omega t}\left(\|g(0-)\|_{H}+\|g\|_{L_{2,\mu}(\R;H)}\right).
\]
Thus, since $v$ is continuous on $\R_{\geq0}$, we derive that 
\[
T_{\rho}^{1}:D_{\rho}\subseteq X_{\rho}^{\mu}\to C_{\omega}(\R_{\geq0};H)
\]
is bounded and can therefore be extended to a $C_{0}$-semigroup on
$X_{\rho}^{\mu}$. Then, by \prettyref{prop:T1_suffices} we obtain
that 
\[
T_{\rho}^{2}:D_{\rho}\subseteq X_{\rho}^{\mu}\to C_{\omega+\varepsilon}(\R_{\geq0};L_{2,\mu}(\R;H))
\]
is also bounded for each $\varepsilon>0$ and hence, (i) follows.
\end{proof}

\section{Applications}

\subsection{Differential-algebraic equations and classical Cauchy problems}

In this section we consider initial value problems of the form 
\begin{align*}
\left(\partial_{t,\rho}E+A\right)u & =0\quad\text{on }\R_{>0},\\
u & =g\quad\text{on }\R_{\leq0},
\end{align*}
for a bounded operator $E\in L(H)$, $H$ a Hilbert space, and a densely
defined linear and closed operator $A:\dom(A)\subseteq H\to H.$ We
note that this corresponds to the abstract initial value problem \prettyref{eq:IVP}
with 
\[
M(z)\coloneqq E\quad(z\in\C).
\]
We assume that the evolutionary problem is well-posed, that is we
assume that there is $\rho_{1}\in\R_{\geq0}$ such that $zE+A$ is
boundedly invertible for each $z\in\C_{\Re\geq\rho_{1}}$ and 
\[
\sup_{z\in\C_{\Re\geq\rho_{1}}}\|(zE+A)^{-1}\|<\infty.
\]
We again denote the infimum over all such $\rho_{1}\in\R_{\geq0}$
by $s_{0}(E,A).$ 
\begin{lem}
\label{lem:His_DAE}The function $M$ given by $M(z)\coloneqq E$
for $z\in\C$ is regularising. Moreover, $\Gamma_{\rho}g=Eg(0-)$
and $K_{\rho}g=0$ for each $g\in H_{\rho}^{1}(\R_{\leq0};H)$ and
$\rho\in\R_{>0}.$ In particular, for $\rho>s_{0}(E,A)$ we have that
\[
\IV_{\rho}=\{x\in H\,;\,S_{\rho}(\delta Ex)-\chi_{\R_{\geq0}}x\in H_{\rho}^{1}(\R;H)\}
\]
and 
\[
\His_{\rho}=\{g\in H_{\rho}^{1}(\R_{\leq0};H)\,;\,g(0-)\in\IV_{\rho}\}.
\]

Moreover, $X_{\rho}^{\mu}=\overline{\IV_{\rho}}\times L_{2,\mu}(\R_{\leq0};H)$
for each $\mu\leq\rho.$
\end{lem}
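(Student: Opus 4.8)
The plan is to verify each assertion of Lemma~\ref{lem:His_DAE} in order, exploiting the fact that $M(z)\equiv E$ is a constant, hence $M(\partial_{t,\rho})=E$ acts pointwise and commutes with every cut-off and translation operator. First I would check that $M$ is regularising: for $x\in H$ and $\rho>0$ the function $M(\partial_{t,\rho})\chi_{\R_{\geq 0}}x = E\chi_{\R_{\geq 0}}x$ has the obvious right limit $Ex$ at $0$, so $\left(M(\partial_{t,\rho})\chi_{\R_{\geq 0}}x\right)(0+) = Ex$ exists. This immediately gives $\Gamma_{\rho}g = \left(E\chi_{\R_{\geq 0}}g(0-)\right)(0+) = Eg(0-)$ for $g\in H_{\rho}^{1}(\R_{\leq 0};H)$. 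For $K_{\rho}g = P_{0}\partial_{t,\rho}M(\partial_{t,\rho})g = P_{0}\partial_{t,\rho}Eg$: since $\spt g\subseteq\R_{\leq 0}$ we have $\spt Eg\subseteq\R_{\leq 0}$, hence $\spt\partial_{t,\rho}Eg\subseteq\R_{\leq 0}$, and by Proposition~\ref{prop:properties_cut_off}~(c) this means $\partial_{t,\rho}Eg\in\ker(P_{0})$, i.e.\ $K_{\rho}g=0$.

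Next I would plug these into the definitions. Since $K_{\rho}g=0$ and $\Gamma_{\rho}g=Eg(0-)$, the solution part is $v = S_{\rho}(Eg(0-)\,\delta_{0})$, so the defining condition $v+g\in H_{\rho}^{1}(\R;H)$ becomes $S_{\rho}(\delta_{0}Eg(0-))+g\in H_{\rho}^{1}(\R;H)$. The key observation here is that membership of $S_{\rho}(\delta_{0}Ex)+g$ in $H_{\rho}^{1}(\R;H)$ depends on $g$ only through $g(0-)=:x$: writing $g = (g - \chi_{\R_{\geq 0}}x) + \chi_{\R_{\geq 0}}x$ — wait, more carefully, for $g\in H_{\rho}^{1}(\R_{\leq 0};H)$ one extends $g$ by its value $g(0-)$ constantly to the right to get an $H_{\rho}^{1}$-function minus a tail; comparing two histories $g_{1},g_{2}$ with the same value at $0-$, their difference (extended by $0$) lies in $H_{\rho}^{1}(\R;H)$, so $S_{\rho}(\delta_{0}Ex)+g_{1}\in H_{\rho}^{1}(\R;H)$ iff $S_{\rho}(\delta_{0}Ex)+g_{2}\in H_{\rho}^{1}(\R;H)$. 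Taking $g_2 = \chi_{\R_{\leq 0}}x$ (the constant history) shows the condition is equivalent to $S_{\rho}(\delta_{0}Ex)-\chi_{\R_{\geq 0}}x\in H_{\rho}^{1}(\R;H)$, because $\chi_{\R_{\leq 0}}x = x\mathbf{1} - \chi_{\R_{\geq 0}}x$ differs from $-\chi_{\R_{\geq 0}}x$ by the constant $x$, which lies in $H_{\rho}^{1}$ for $\rho>0$ (constants are in $L_{2,\rho}$ with vanishing derivative, hence in $H_{\rho}^{1}$). This yields the stated description of $\IV_{\rho}$ as a set of $x\in H$, and then $\His_{\rho} = \{g\in H_{\rho}^{1}(\R_{\leq 0};H)\,;\,g(0-)\in\IV_{\rho}\}$ follows tautologically.

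For the final claim $X_{\rho}^{\mu} = \overline{\IV_{\rho}}\times L_{2,\mu}(\R_{\leq 0};H)$, I would argue that $D_{\rho} = \{(g(0-),g)\,;\,g\in\His_{\rho}\} = \{(x,g)\,;\,x\in\IV_{\rho},\ g\in H_{\rho}^{1}(\R_{\leq 0};H),\ g(0-)=x\}$. To compute its closure in $H\times L_{2,\mu}(\R;H)$: first, the $L_{2,\mu}$-closure of the set of $g\in H_{\rho}^{1}(\R_{\leq 0};H)$ with a prescribed value $g(0-)=x$ is all of $L_{2,\mu}(\R_{\leq 0};H)$ — one can perturb $g$ near $0$ without changing membership, and the constraint $g(0-)=x$ is not $L_{2,\mu}$-closed (point values are not controlled by the $L_2$-norm), while $H_{\rho}^{1}(\R_{\leq 0};H)$ is $L_{2,\mu}$-dense in $L_{2,\mu}(\R_{\leq 0};H)$ for $\mu\leq\rho$. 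Hence for each fixed $x\in\IV_{\rho}$ the section $\{x\}\times L_{2,\mu}(\R_{\leq 0};H)$ lies in $\overline{D_{\rho}}$; taking closure in the first coordinate gives $\overline{\IV_{\rho}}\times L_{2,\mu}(\R_{\leq 0};H)\subseteq X_{\rho}^{\mu}$, and the reverse inclusion is immediate since $D_{\rho}\subseteq\IV_{\rho}\times L_{2,\mu}(\R_{\leq 0};H)$.

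The main obstacle I anticipate is the density step in the last paragraph: showing that for fixed $x$ the histories $g\in H_{\rho}^{1}(\R_{\leq 0};H)$ with $g(0-)=x$ are $L_{2,\mu}$-dense in $L_{2,\mu}(\R_{\leq 0};H)$, and in particular that this does not secretly shrink $\IV_{\rho}$ (one must check $\IV_{\rho}\neq\emptyset$, which holds since $\His_{\rho}$ is nonempty — e.g.\ by the well-posedness it contains histories arising from genuine $H_{\rho}^{1}$-solutions, or one notes $0\in\His_{\rho}$). The delicate point is that the constraint $g(0-)=x$ together with $g\in H_{\rho}^{1}(\R_{\leq 0};H)$ is compatible with approximating an arbitrary $L_{2,\mu}(\R_{\leq 0};H)$ function: given $h\in L_{2,\mu}(\R_{\leq 0};H)$, approximate $h$ in $L_{2,\mu}$ by smooth compactly supported functions and then splice in a short ramp to $x$ near $0$ of shrinking length; the splicing region contributes $O(\text{length})$ in $L_{2,\mu}$, so the approximation survives. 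Everything else is bookkeeping with the pointwise action of $E$ and the already-established properties of $S_{\rho}$, $P_{0}$, and the Sobolev embedding of Proposition~\ref{prop:td}~(e).
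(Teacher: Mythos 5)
Your handling of the regularising property, of $\Gamma_{\rho}g=Eg(0-)$, of $K_{\rho}g=0$ (via \prettyref{prop:properties_cut_off}~(c), a legitimate shortcut for the paper's direct computation), and your ramp/splicing argument for $X_{\rho}^{\mu}=\overline{\IV_{\rho}}\times L_{2,\mu}(\R_{\leq0};H)$ all match the paper's proof in substance. The gap is in the reduction from ``$S_{\rho}(\delta_{0}Eg(0-))+g\in H_{\rho}^{1}(\R;H)$'' to ``$S_{\rho}(\delta_{0}Ex)-\chi_{\R_{\geq0}}x\in H_{\rho}^{1}(\R;H)$'' with $x=g(0-)$. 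You route this through the constant history $g_{2}=\chi_{\R_{\leq0}}x$ together with the claim that the constant function $x\mathbf{1}$ lies in $H_{\rho}^{1}(\R;H)$ for $\rho>0$. That claim is false: the weight $\e^{-2\rho t}$ blows up as $t\to-\infty$ when $\rho>0$, so a nonzero constant is not even in $L_{2,\rho}(\R;H)$; likewise $\chi_{\R_{\leq0}}x\notin H_{\rho}^{1}(\R_{\leq0};H)$ for $x\neq0$ (its weighted $L_{2}$-norm on $\R_{\leq0}$ is infinite), so it is not an admissible comparison history and your ``same value at $0-$ implies the difference lies in $H_{\rho}^{1}$'' observation cannot be applied to it --- indeed $g-\chi_{\R_{\leq0}}x$ fails to be in $L_{2,\rho}(\R;H)$ as well. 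So both intermediate steps break down, even though the two endpoints are in fact equivalent.

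The correct (and shorter) bridge is the paper's: for $g\in H_{\rho}^{1}(\R_{\leq0};H)$ one has $g+\chi_{\R_{\geq0}}g(0-)\in H_{\rho}^{1}(\R;H)$, because extending $g$ to the right half-line by the constant value $g(0-)$ creates no jump at $0$ and the constant sits on $\R_{\geq0}$, where $\e^{-2\rho t}$ is integrable. Adding and subtracting $\chi_{\R_{\geq0}}g(0-)$ in the defining condition then shows directly that $g\in\His_{\rho}$ if and only if $S_{\rho}(\delta_{0}Eg(0-))-\chi_{\R_{\geq0}}g(0-)\in H_{\rho}^{1}(\R;H)$, i.e.\ the condition depends on $g$ only through $g(0-)$, which gives the stated descriptions of $\IV_{\rho}$ and $\His_{\rho}$. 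With this repair the remainder of your argument goes through essentially as in the paper.
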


\begin{proof}
For $x\in H$, $\rho>0$ we have 
\[
M(\partial_{t,\rho})\chi_{\R_{\geq0}}x=\chi_{\R_{\geq0}}Ex
\]
and thus, $M$ is regularising with $\Gamma_{\rho}g=Eg(0-)$ for each
$g\in H_{\rho}^{1}(\R_{\leq0};H).$ Moreover, we have 
\begin{align*}
K_{\rho}g & =P_{0}\partial_{t,\rho}Eg\\
 & =\partial_{t,\rho}\chi_{\R_{\geq0}}(\m)Eg-\delta_{0}(Eg)(0+)\\
 & =0.
\end{align*}
Hence, for $g\in H_{\rho}^{1}(\R_{\leq0};H)$, $\rho>s_{0}(E,A)$,
we have 
\begin{align*}
g\in\His_{\rho} & \Leftrightarrow\:S_{\rho}(\delta_{0}Eg(0-))+g\in H_{\rho}^{1}(\R;H)\\
 & \Leftrightarrow\:S_{\rho}(\delta_{0}Eg(0-))-\chi_{\R_{\geq0}}g(0-)+g+\chi_{\R_{\geq0}}g(0-)\in H_{\rho}^{1}(\R;H)\\
 & \Leftrightarrow\:S_{\rho}(\delta_{0}Eg(0-))-\chi_{\R_{\geq0}}g(0-)\in H_{\rho}^{1}(\R;H)
\end{align*}
which proves the asserted equalities for $\His_{\rho}$ and $\IV_{\rho}$.\\
Finally, let $x\in\overline{\IV_{\rho}}$ and $g\in L_{2,\mu}(\R_{\leq0};H)$
for some $\mu\leq\rho$ with $\rho>s_{0}(E,A).$ Then we find a sequence
$(x_{n})_{n\in\N}$ in $\IV_{\rho}$ and a sequence $(\varphi_{n})_{n\in\N}$
in $C_{c}^{\infty}(\R_{<0};H)$ such that $x_{n}\to x$ and $\varphi_{n}\to g$
in $H$ and $L_{2,\mu}(\R_{\leq0};H)$, respectively. Moreover, we
set 
\[
\psi_{n}(t)\coloneqq\begin{cases}
\left(nt+1\right)x_{n} & \text{ if }-\frac{1}{n}\leq t\leq0,\\
0 & \text{ else}
\end{cases}\quad(t\in\R_{\leq0},n\in\N)
\]
and obtain a sequence $(\psi_{n})_{n\in\N}$ in $H_{\rho}^{1}(\R_{\leq0};H)$
with $\psi_{n}(0-)=x_{n}$ for $n\in\N$ and $\psi_{n}\to0$ as $n\to\infty$
in $L_{2,\mu}(\R_{\leq0};H).$ Consequently, setting $g_{n}\coloneqq\psi_{n}+\varphi_{n}\in H_{\rho}^{1}(\R_{\leq0};H)$
for $n\in\N$ we obtain a sequence $(x_{n},g_{n})_{n\in\N}$ in $D_{\rho}$
with $(x_{n},g_{n})\to(x,g)$ in $H\times L_{2,\mu}(\R;H)$ and thus,
$(x,g)\in X_{\rho}^{\mu}.$ Since the other inclusion holds obviously,
this proves the assertion.  
\end{proof}
We now inspect the space $\IV_{\rho}$ a bit closer. In particular,
we are able to determine its closure $\overline{\IV_{\rho}}$ and
a suitable dense subset of $\overline{\IV_{\rho}}$.
\begin{prop}
\label{prop:IV_DAE}We set 
\[
U\coloneqq\{x\in\dom(A)\,;\,\exists y\in\dom(A):\:Ax=Ey\}.
\]
Then $U\subseteq\IV_{\rho}$ and $\overline{U}=\overline{\IV_{\rho}}$
for each $\rho>s_{0}(E,A)$. In particular, $\overline{\IV_{\rho}}$
does not depend on the particular choice of $\rho>s_{0}(E,A)$. 
\end{prop}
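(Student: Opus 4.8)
The plan is to prove the two set inclusions $U\subseteq\IV_{\rho}$ and $\IV_{\rho}\subseteq\overline{U}$. Together with $U\subseteq\IV_{\rho}\subseteq\overline{\IV_{\rho}}$, the first gives $\overline{U}\subseteq\overline{\IV_{\rho}}$ and the second gives $\overline{\IV_{\rho}}\subseteq\overline{U}$, hence $\overline{U}=\overline{\IV_{\rho}}$; since $U$ is defined without any reference to $\rho$, the $\rho$-independence of $\overline{\IV_{\rho}}$ is then immediate. Throughout I use the descriptions of $\IV_{\rho}$ and $\His_{\rho}$ from \prettyref{lem:His_DAE}, the identities $\partial_{t,\rho}\chi_{\R_{\geq0}}=\delta_{0}$ and $\partial_{t,\rho}^{-1}\delta_{0}=\chi_{\R_{\geq0}}$, the causality of $S_{\rho}$ (\prettyref{thm:well_posed}), and the facts from \prettyref{prop:extension_sol_op} that $S_{\rho}$ inverts $\partial_{t,\rho}E+A$ on $H_{\rho}^{-1}(\R;H)$ and commutes with $\partial_{t,\rho}$ (so that $\partial_{t,\rho}^{-1}S_{\rho}=S_{\rho}\partial_{t,\rho}^{-1}$).

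\emph{Step 1: $U\subseteq\IV_{\rho}$.} Let $x\in U$ and pick $y\in\dom(A)$ with $Ax=Ey$. Since $\chi_{\R_{\geq0}}x$ and $\chi_{\R_{\geq0}}y$ are pointwise $\dom(A)$-valued, one computes in $H_{\rho}^{-1}(\R;H)$
\[
(\partial_{t,\rho}E+A)\chi_{\R_{\geq0}}y=(Ey)\delta_{0}+\chi_{\R_{\geq0}}Ay,\qquad(\partial_{t,\rho}E+A)\chi_{\R_{\geq0}}x=(Ex)\delta_{0}+\chi_{\R_{\geq0}}Ax .
\]
Applying $S_{\rho}$ to the first identity, and using $Ax=Ey$ in the second, yields
\[
S_{\rho}\big((Ax)\delta_{0}\big)=\chi_{\R_{\geq0}}y-S_{\rho}\big(\chi_{\R_{\geq0}}Ay\big)\in L_{2,\rho}(\R;H),\qquad S_{\rho}(\delta_{0}Ex)-\chi_{\R_{\geq0}}x=-S_{\rho}\big(\chi_{\R_{\geq0}}Ax\big),
\]
the first membership because $S_{\rho}$ maps $L_{2,\rho}(\R;H)$ into itself and $Ay\in H$. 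Now
\[
S_{\rho}\big(\chi_{\R_{\geq0}}Ax\big)=S_{\rho}\big(\partial_{t,\rho}^{-1}((Ax)\delta_{0})\big)=\partial_{t,\rho}^{-1}\big(S_{\rho}((Ax)\delta_{0})\big)\in\dom(\partial_{t,\rho})=H_{\rho}^{1}(\R;H),
\]
its argument $S_{\rho}((Ax)\delta_{0})$ being in $L_{2,\rho}(\R;H)$. Hence $S_{\rho}(\delta_{0}Ex)-\chi_{\R_{\geq0}}x\in H_{\rho}^{1}(\R;H)$, i.e.\ $x\in\IV_{\rho}$.

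\emph{Step 2: $\IV_{\rho}\subseteq\overline{U}$.} Fix $x\in\IV_{\rho}$ and put $v\coloneqq S_{\rho}(\delta_{0}Ex)$, so $v-\chi_{\R_{\geq0}}x\in H_{\rho}^{1}(\R;H)$; in particular $v\in L_{2,\rho}(\R;H)$, $\spt v\subseteq\R_{\geq0}$ by causality, and by the embedding $H_{\rho}^{1}(\R;H)\hookrightarrow C_{\rho}(\R;H)$ of \prettyref{prop:td}(e) the continuous representative of $v$ (which vanishes on $\R_{<0}$) satisfies $v(t)\to x$ as $t\to0+$. For $\phi\in C_{c}^{\infty}(\R_{>0};\C)$ set $I_{\phi}\coloneqq\int_{\R}\phi(t)v(t)\d t\in H$, and similarly $I_{\phi'}$. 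Testing the equation $(\partial_{t,\rho}E+A)v=\delta_{0}Ex$ against $\varphi(t)\coloneqq\e^{2\rho t}\phi(t)\psi$ with $\psi\in\dom(A^{\ast})$ — for which the $\delta_{0}$-term drops out since $\phi(0)=0$ — and integrating by parts once collapses to
\[
\langle I_{\phi},A^{\ast}\psi\rangle=\langle EI_{\phi'},\psi\rangle\qquad(\psi\in\dom(A^{\ast})).
\]
As $A=(A^{\ast})^{\ast}$, this says $I_{\phi}\in\dom(A)$ with $AI_{\phi}=EI_{\phi'}$; applying the same identity with $\phi'$ in place of $\phi$ gives $I_{\phi'}\in\dom(A)$ as well, so $I_{\phi}\in U$. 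Choosing $\phi_{n}\geq0$ with $\int\phi_{n}=1$ and $\spt\phi_{n}\subseteq(0,1/n)$ we obtain $I_{\phi_{n}}=\int_{0}^{1/n}\phi_{n}(t)v(t)\d t\to v(0+)=x$, whence $x\in\overline{U}$.

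\emph{Main obstacle.} Each computation is short once one is careful about which space it is read in: the distribution $\delta_{0}$ forces $(\partial_{t,\rho}E+A)\chi_{\R_{\geq0}}y$ and the commutation $\partial_{t,\rho}^{-1}S_{\rho}=S_{\rho}\partial_{t,\rho}^{-1}$ to be interpreted in $H_{\rho}^{-1}(\R;H)$ rather than $L_{2,\rho}(\R;H)$, and the weak form of the equation tested against $\dom(A^{\ast})$-valued $\varphi$ in Step 2 is the one inherited from the $H^{-1}$-extension of the solution operator of \prettyref{prop:extension_sol_op}. The only genuinely analytic inputs are the closedness of $A$ (used to pass from $\langle I_{\phi},A^{\ast}\psi\rangle=\langle EI_{\phi'},\psi\rangle$ to $I_{\phi}\in\dom(A)$) and the Sobolev-type embedding $H_{\rho}^{1}(\R;H)\hookrightarrow C_{\rho}(\R;H)$ (used to give meaning to $v(0+)$).
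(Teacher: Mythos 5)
Your proposal is correct. Step 1 is essentially the paper's own computation: you use $Ax=Ey$, the identity $S_{\rho}\bigl(\partial_{t,\rho}E\chi_{\R_{\geq0}}z+\chi_{\R_{\geq0}}Az\bigr)=\chi_{\R_{\geq0}}z$ for $z\in\dom(A)$, and the commutation of $S_{\rho}$ with $\partial_{t,\rho}^{-1}$ in $H_{\rho}^{-1}(\R;H)$, which is exactly how the paper shows $S_{\rho}(\delta Ex)-\chi_{\R_{\geq0}}x\in H_{\rho}^{1}(\R;H)$. Step 2, however, is a genuinely different route to $\IV_{\rho}\subseteq\overline{U}$. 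The paper first shows $v\in L_{2,\rho}(\R;\dom(A))$ (from $\partial_{t,\rho}E(v-\chi_{\R_{\geq0}}x)=-Av$ and closedness of $A$), integrates the equation to get $\int_{0}^{t}v(s)\d s\in A^{-1}[\ran(E)]$, concludes $x\in\overline{A^{-1}[\ran(E)]}$ from $v(0+)=x$, and then needs a separate resolvent approximation $y_{n}\coloneqq(\lambda E+A)^{-1}(\lambda Ey+Ez_{n})$ with $z_{n}\in\dom(A)$, $z_{n}\to z$, to prove $A^{-1}[\ran(E)]\subseteq\overline{U}$. You instead test the equation for $v$ against $\e^{2\rho t}\phi(t)\psi$ with $\phi\in C_{c}^{\infty}(\R_{>0})$, $\psi\in\dom(A^{\ast})$, obtaining $\langle I_{\phi},A^{\ast}\psi\rangle=\langle EI_{\phi'},\psi\rangle$ for $I_{\phi}=\int\phi(t)v(t)\d t$; since $A=A^{\ast\ast}$ this yields $I_{\phi}\in\dom(A)$ with $AI_{\phi}=EI_{\phi'}$, and the same identity applied to $\phi'$ shows the witness $I_{\phi'}\in\dom(A)$, so every mollified average $I_{\phi}$ lies in $U$ itself; letting $\phi_{n}$ concentrate at $0+$ gives $x\in\overline{U}$ directly. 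This bypasses both the intermediate set $A^{-1}[\ran(E)]$ and the resolvent argument, and also avoids proving $v\in L_{2,\rho}(\R;\dom(A))$ — a shorter, more self-contained endgame. The price is that the tested form of $(\partial_{t,\rho}E+A)v=\delta_{0}Ex$ must be justified, since the equation a priori holds only in the $H_{\rho}^{-1}$-extended sense; this is routine (pass to $w\coloneqq\partial_{t,\rho}^{-1}v=S_{\rho}(\chi_{\R_{\geq0}}Ex)\in H_{\rho}^{1}(\R;H)$, whose equation can be paired in $L_{2,\rho}$ with $\dom(A^{\ast})$-valued test functions, and integrate by parts once), and it is the same kind of manipulation the paper performs in its Section 5 proposition, but it deserves an explicit line. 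One small imprecision: it is $v-\chi_{\R_{\geq0}}x$, not $v$, that lies in $H_{\rho}^{1}(\R;H)\hookrightarrow C_{\rho}(\R;H)$; combined with $\spt v\subseteq\R_{\geq0}$ this gives the right-limit $v(0+)=x$, which is all you use.
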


\begin{proof}
Let $\rho>s_{0}(E,A)$, $x\in U$ and $y\in\dom(A)$ with $Ax=Ey.$
Then we compute 
\begin{align*}
S_{\rho}\left(\delta Ex\right)-\chi_{\R_{\geq0}}x & =\left(\partial_{t,\rho}E+A\right)^{-1}(\delta Ex-\delta Ex-\chi_{\R_{\geq0}}Ax)\\
 & =-(\partial_{t,\rho}E+A)^{-1}(\chi_{\R_{\geq0}}Ey)\\
 & =-(\partial_{t,\rho}E+A)^{-1}(\partial_{t,\rho}E\partial_{t,\rho}^{-1}\chi_{\R_{\geq0}}y)\\
 & =-\partial_{t,\rho}^{-1}\chi_{\R_{\geq0}}y+(\partial_{t,\rho}E+A)^{-1}(\partial_{t,\rho}^{-1}\chi_{\R_{\geq0}}Ay)\in H_{\rho}^{1}(\R;H),
\end{align*}
which shows hat $x\in\IV_{\rho}$ by \prettyref{lem:His_DAE}. For
showing the remaining assertion, we prove that $\IV_{\rho}\subseteq\overline{U}.$
For doing so, let $x\in\IV_{\rho}$ and set $v\coloneqq S_{\rho}(\delta Ex).$
Then 
\begin{align*}
\partial_{t,\rho}E(v-\chi_{\R_{\geq0}}x) & =(\partial_{t,\rho}E+A)v-\delta Ex-Av\\
 & =-Av,
\end{align*}
and since the left-hand side belongs to $L_{2,\rho}(\R;H)$ we infer
that $v\in L_{2,\rho}(\R;\dom(A)).$ Hence, $\partial_{t,\rho}^{-1}v\in H_{\rho}^{1}(\R;\dom(A))\hookrightarrow C_{\rho}(\R;\dom(A))$
and so $\int_{0}^{t}v(s)\d s=\left(\partial_{t,\rho}^{-1}v\right)(t)\in\dom(A)$
for each $t\geq0$ and 
\[
A\int_{0}^{t}v(s)\d s=Ev(t)-Ex\quad(t\geq0).
\]
Consequently, 
\[
\int_{0}^{t}v(s)\d s\in A^{-1}[\ran(E)]\quad(t\geq0)
\]
and since $v$ is continuous on $\R_{\geq0}$ and hence, $\frac{1}{t}\int_{0}^{t}v(s)\d s\to v(0+)=x$
as $t\to0$, it suffices to prove $A^{-1}[\ran(E)]\subseteq\overline{U}$.
For doing so, let $y\in A^{-1}[\ran(E)]$, i.e., $y\in\dom(A)$ and
$Ay=Ez$ for some $z\in H.$ We choose a sequence $(z_{n})_{n\in\N}$
in $\dom(A)$ with $z_{n}\to z$ as $n\to\infty$ and define 
\[
y_{n}\coloneqq\left(\lambda E+A\right)^{-1}(\lambda Ey+Ez_{n})\quad(n\in\N),
\]
where $\lambda>s_{0}(E,A)$ is fixed. Then $y_{n}\in U,$ since 
\[
Ay_{n}=A\left(\lambda E+A\right)^{-1}(\lambda Ey+Ez_{n})=E\left(\lambda E+A\right)^{-1}(\lambda Ay+Az_{n})\in E[\dom(A)]
\]
and since $Ez_{n}\to Ez=Ay,$ we infer that $y_{n}\to y$ and hence,
$y\in\overline{U}$. 
\end{proof}
\begin{thm}
\label{thm:HY_DAE}Let $M(z)\coloneqq E$ for $z\in\C$, $\rho>s_{0}(E,A)$
and let $T^{\rho}:D_{\rho}\subseteq\IV_{\rho}\times\His_{\rho}\to\IV_{\rho}\times\His_{\rho}$
denote the semigroup associated with $(M,A).$ Moreover, for $x\in H$
we define 
\[
f_{x}(t)\coloneqq\begin{cases}
(t+1)x & \text{ if }t\in[-1,0],\\
0 & \text{ else}
\end{cases}\quad(t\in\R_{\leq0}).
\]
Then the following statements are equivalent:

\begin{enumerate}[(i)]

\item $T^{\rho}$ extends to a $C_{0}$-semigroup in $\overline{\IV_{\rho}}\times L_{2,\mu}(\R_{\leq0};H)$
for some $\mu\leq\rho.$

\item There exists $M\geq1$ and $\omega\geq\rho$ such that 
\begin{equation}
\|\left((\lambda E+A)^{-1}E\right)^{n}\|\leq\frac{M}{(\lambda-\omega)^{n}}\quad(\lambda>\omega,n\in\N).\label{eq:HY_condition}
\end{equation}

\item $T^{\rho}$ extends to a $C_{0}$-semigroup in $\overline{\IV_{\rho}}\times L_{2,\mu}(\R_{\leq0};H)$
for each $\mu\leq\rho.$

\item The family of functions 
\[
S^{\rho}(t):\IV_{\rho}\subseteq\overline{\IV_{\rho}}\to\overline{\IV_{\rho}},\quad x\mapsto T_{1}^{\rho}(t)(x,f_{x})
\]
for $t\geq0$ extends to a $C_{0}$-semigroup on $\overline{\IV_{\rho}}.$ 

\end{enumerate}

In the latter case, $S^{\rho}(t)x=T^{\rho}(t)(x,0)$ for each $x\in\overline{\IV_{\rho}}$
and $t\geq0$. 
\end{thm}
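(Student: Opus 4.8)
The plan is to reduce the statement to the abstract characterisation in \prettyref{thm:HY} by evaluating the function $r_{g}$ for the constant material law $M(\partial_{t,\rho})=E$. By \prettyref{lem:His_DAE} we have $\Gamma_{\rho}g=Eg(0-)$, $K_{\rho}g=0$, and since $\spt g\subseteq\R_{\leq0}$ also $\chi_{\R_{\geq0}}(\m)M(\partial_{t,\rho})g=\chi_{\R_{\geq0}}(\m)Eg=0$ and $\left(M(\partial_{t,\rho})g\right)(0-)=Eg(0-)$; inserting this into \prettyref{prop:function_r} gives
\[
r_{g}(\lambda)=(\lambda E+A)^{-1}Eg(0-)\qquad(\lambda>\rho),
\]
which depends on $g$ only through $x\coloneqq g(0-)$. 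Differentiating $(\lambda E+A)^{-1}$ repeatedly (using $\bigl((\lambda E+A)^{-1}\bigr)'=-(\lambda E+A)^{-1}E(\lambda E+A)^{-1}$) yields $r_{g}^{(k)}(\lambda)=(-1)^{k}k!\left((\lambda E+A)^{-1}E\right)^{k+1}x$, so that — writing $n=k+1$ — condition (ii) of \prettyref{thm:HY} reads
\[
(\lambda-\omega)^{n}\left\Vert \left((\lambda E+A)^{-1}E\right)^{n}x\right\Vert \leq M\bigl(\|x\|_{H}+\|g\|_{L_{2,\mu}(\R;H)}\bigr)\qquad(\lambda>\omega,\ n\in\N,\ g\in\His_{\rho}).
\]

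The left-hand side depends only on $x=g(0-)$, and for each $x\in\IV_{\rho}$ and $\varepsilon>0$ the functions $\psi_{n}$ from the proof of \prettyref{lem:His_DAE} provide a $g\in\His_{\rho}$ with $g(0-)=x$ and $\|g\|_{L_{2,\mu}(\R;H)}<\varepsilon$; hence the inequality above (for all $g\in\His_{\rho}$) is equivalent to $(\lambda-\omega)^{n}\|((\lambda E+A)^{-1}E)^{n}x\|\leq M\|x\|_{H}$ for all $x\in\IV_{\rho}$. Since $(\lambda E+A)^{-1}E$ is bounded on $H$ and, by $A(\lambda E+A)^{-1}Ex=E\bigl(x-\lambda(\lambda E+A)^{-1}Ex\bigr)$, leaves the set $U$ of \prettyref{prop:IV_DAE} — hence $\overline{U}=\overline{\IV_{\rho}}$ — invariant, the density of $\IV_{\rho}$ in $\overline{\IV_{\rho}}$ turns this into \prettyref{eq:HY_condition}, the operator norm there being taken on $\overline{\IV_{\rho}}$; in particular the condition is independent of $\mu\leq\rho$. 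Applying the equivalence (i)$\Leftrightarrow$(ii) of \prettyref{thm:HY} for one, resp.\ for every, admissible $\mu$ now gives (i)$\Leftrightarrow$\prettyref{eq:HY_condition} and (iii)$\Leftrightarrow$\prettyref{eq:HY_condition}, that is (i)$\Leftrightarrow$(ii)$\Leftrightarrow$(iii).

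It remains to bring in (iv). Because $v=S_{\rho}(\delta_{0}Ex)$ depends only on $x=g(0-)$, the construction of $T_{1}^{\rho}$ gives $T_{1}^{\rho}(t)(x,g)=v(t+)$ for every $g\in\His_{\rho}$ with $g(0-)=x$; as $f_{x}\in H_{\rho}^{1}(\R_{\leq0};H)$ with $f_{x}(0-)=x$, it lies in $\His_{\rho}$ whenever $x\in\IV_{\rho}$ (\prettyref{lem:His_DAE}), so $S^{\rho}$ is well defined on $\IV_{\rho}$ and $S^{\rho}(t)x=u(t)$ for $t>0$, where $u=v+f_{x}\in H_{\rho}^{1}(\R;H)$. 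Feeding \prettyref{thm:invariance} (with $g=f_{x}$) — which identifies $\chi_{\R_{\geq0}}(\m)\tau_{s}u$ with $S_{\rho}(\delta_{0}Eu(s))$ — back into this $x$-dependence yields $S^{\rho}(t)S^{\rho}(s)x=u(t+s)=S^{\rho}(t+s)x$, while $S^{\rho}(t)x=u(t)\to u(0)=x$ as $t\to0+$ by continuity of $u$ (\prettyref{prop:td}(e)). For (iii)$\Rightarrow$(iv) I would use the boundedness $T_{1}^{\rho}:X_{\rho}^{\mu}\to C_{\omega}(\R_{\geq0};H)$ furnished by \prettyref{thm:HY}: from $\|(x,f_{x})\|_{X_{\rho}^{\mu}}\leq c\|x\|_{H}$ we obtain $\|S^{\rho}(t)x\|\leq cM\e^{\omega t}\|x\|_{H}$ on the dense subspace $\IV_{\rho}$, so $S^{\rho}$ extends to a $C_{0}$-semigroup on $\overline{\IV_{\rho}}$. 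Conversely, a growth bound $\|S^{\rho}(t)\|\leq M\e^{\omega t}$ with $\omega\geq\rho$ means $\|v(t)\|\leq M\e^{\omega t}\|x\|_{H}$ for $x\in\IV_{\rho}$, and since $r_{g}(\lambda)=\int_{0}^{\infty}\e^{-\lambda t}v(t)\d t$, the Laplace estimate $\|r_{g}^{(k)}(\lambda)\|\leq Mk!(\lambda-\omega)^{-k-1}\|x\|_{H}$ together with the formula for $r_{g}^{(k)}$ above reproduces \prettyref{eq:HY_condition}; this closes (iv)$\Rightarrow$(ii). Finally, approximating $(x,0)$ in $X_{\rho}^{\mu}$ by $(x,\psi_{n})$ and noting that the first component of $T^{\rho}(t)(x,\psi_{n})$ equals $v(t+)=S^{\rho}(t)x$ for all $n$ identifies the first component of $T^{\rho}(t)(x,0)$ with $S^{\rho}(t)x$, which is the asserted identity.

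The main obstacle I anticipate is the bookkeeping around $\overline{\IV_{\rho}}$ — verifying its invariance under $(\lambda E+A)^{-1}E$ so that \prettyref{eq:HY_condition} is meaningful there and the density arguments close — together with keeping track of the "for some $\mu$" versus "for every $\mu$" distinction when invoking \prettyref{thm:HY}; the analytic content (the explicit form of $r_{g}$, the resolvent-derivative identity, and the Widder-type Laplace estimate) is routine by comparison.
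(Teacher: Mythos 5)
Your proposal is correct and follows essentially the same route as the paper: compute $r_{g}(\lambda)=(\lambda E+A)^{-1}Eg(0-)$, remove the history norm by scaling hat functions with fixed endpoint $x$, apply \prettyref{thm:HY}, and use that $T_{1}^{\rho}(t)(x,g)$ depends only on $x=g(0-)$ (since $K_{\rho}g=0$) to treat (iv) and the final identity. The only deviations are organisational: you close the cycle via (iv)$\Rightarrow$(ii) by a direct Laplace-transform estimate where the paper proves (iv)$\Rightarrow$(i) through \prettyref{prop:T1_suffices}, and your invariance remark reading the norm in \prettyref{eq:HY_condition} on $\overline{\IV_{\rho}}$ is an extra precision that the paper's own density step also implicitly relies on.
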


\begin{proof}
We first compute the function $r_{g}$ for $g\in\His_{\rho}$ as it
was defined in \prettyref{thm:HY}. We have that 
\[
\left(M(\partial_{t,\rho})g\right)(0-)-\lambda\sqrt{2\pi}\mathcal{L}_{\lambda}\left(\chi_{\R_{\geq0}}(\m)M(\partial_{t,\rho})g\right)(0)=Eg(0-)\quad(\lambda>\rho)
\]
and hence, 
\[
r_{g}(\lambda)=(\lambda E+A)^{-1}Eg(0-)\quad(\lambda>\rho).
\]
Consequently, 
\[
r_{g}^{(k)}(\lambda)=(-1)^{k}k!\left(\left(\lambda E+A\right)^{-1}E\right)^{k+1}g(0-)\quad(k\in\N_{0},\lambda>\rho).
\]

(i) $\Rightarrow$ (ii): By \prettyref{thm:HY} (note that $X_{\rho}^{\mu}=\overline{\IV_{\rho}}\times L_{2,\mu}(\R_{\leq0};H)$
by \prettyref{lem:His_DAE}) we know that there exists $M\geq1$ and
$\omega\geq\rho$ such that 
\[
\frac{(\lambda-\omega)^{k+1}}{k!}\|r_{g}^{(k)}(\lambda)\|\leq M\left(\|g(0-)\|_{H}+\|g\|_{L_{2,\mu}(\R;H)}\right)
\]
for each $\lambda>\omega,k\in\N$ and $g\in\His_{\rho}.$ Choosing
now $x\in\overline{\IV_{\rho}}$ we infer that 
\begin{align*}
\|\left((\lambda E+A)^{-1}E\right)^{n}x\| & =\frac{1}{(n-1)!}\|r_{f_{x}(k\cdot)}^{(n-1)}(\lambda)\|\\
 & \leq\frac{M}{(\lambda-\omega)^{n}}\left(\|x\|_{H}+\|f_{x}(k\cdot)\|_{L_{2,\mu}(\R;H)}\right)
\end{align*}
for each $\lambda>\omega$, $n,k\in\N$. Since $f_{x}(k\cdot)\to0$
as $k\to\infty$, we infer that 
\[
\|\left((\lambda E+A)^{-1}E\right)^{n}\|\leq\frac{M}{(\lambda-\omega)^{n}}\quad(\lambda>\omega,n\in\N).
\]
(ii) $\Rightarrow$ (iii): Let $\mu\leq\rho$. By assumption, there
exists $M\geq1,\omega\geq\rho$ such that 
\begin{align*}
\frac{(\lambda-\omega)^{k+1}}{k!}\|r_{g}^{(k)}(\lambda)\| & =(\lambda-\omega)^{k+1}\|\left((\lambda E+A)^{-1}E\right)^{k+1}g(0-)\|\\
 & \leq M\|g(0-)\|_{H}\\
 & \leq M\left(\|g(0-)\|+\|g\|_{L_{2,\mu}(\R;H)}\right)
\end{align*}
for each $\lambda>\omega,k\in\N_{0}$ and $g\in\His_{\rho}$ and hence,
the assertion follows from \prettyref{thm:HY} and \prettyref{lem:His_DAE}. 

(iii) $\Rightarrow$ (iv): Since $T^{\rho}$ extends to a $C_{0}$-semigroup
on $\overline{\IV_{\rho}}\times L_{2}(\R_{\leq0};H)$ and since 
\[
\|f_{x}\|_{L_{2}(\R;H)}\leq\|x\|_{H}\quad(x\in H),
\]
we infer that there is $M\geq1$ and $\omega\in\R$ such that 
\[
\|S^{\rho}(t)x\|\leq2M\e^{\omega t}\|x\|\quad(x\in\IV_{\rho})
\]
 and thus, $(S^{\rho}(t))_{t\geq0}$ extends to a $C_{0}$-semigroup
on $\overline{\IV_{\rho}}$. Moreover, since 
\begin{align*}
S^{\rho}(t)x & =T_{1}^{\rho}(t)(x,f_{x})\\
 & =\left((\partial_{t,\rho}E+A)^{-1}(\delta_{0}Ex)\right)(t)\\
 & =T_{1}^{\rho}(t)(x,0)
\end{align*}
for each $t\geq0,x\in\overline{\IV_{\rho}}$, we obtain the at the
end asserted formula .\\
(iv) $\Rightarrow$ (i): By assumption, there is $M\geq1,\omega\in\R$
such that 
\[
\|T_{1}^{\rho}(t)(x,f_{x})\|\leq M\e^{\omega t}\|x\|\quad(x\in\IV_{\rho},t\geq0).
\]
Moreover, since 
\[
T_{1}^{\rho}(t)(x,g)=T_{1}^{\rho}(t)(x,f_{x})\quad\left((x,g)\in D_{\rho}\right),
\]
we infer that 
\[
T_{1}^{\rho}:D_{\rho}\subseteq\overline{\IV_{\rho}}\times L_{2,\mu}(\R_{\leq0};H)\to C_{\omega}(\R_{\geq0};H)
\]
 is continuous and hence, the assertion follows by \prettyref{prop:T1_suffices}.
\end{proof}
\begin{rem}
We remark that in the case of classical Cauchy problems, i.e. $E=1$,
condition \prettyref{eq:HY_condition} is nothing but the classical
Hille-Yosida condition for generators of $C_{0}$-semigroups (see
e.g. \cite[Chapter II, Theorem 3.8]{engel2000one}). Note that in
this case, $U=\dom(A^{2})$ in \prettyref{prop:IV_DAE} and hence,
$\overline{\IV_{\rho}}=\overline{U}=H.$ 
\end{rem}

\subsection{A hyperbolic delay equation}

As a slight generalisation of \cite[Example 3.17]{Batkai_2005} we
consider a concrete delay equation of the form 
\begin{align}
\partial_{t,\rho}^{2}u-\dive k\grad u-\sum_{i=1}^{n}c_{i}\tau_{-h_{i}}\partial_{i}u-c_{0}\tau_{-h_{0}}\partial_{t,\rho}u & =0\quad\text{on }\R_{>0},\nonumber \\
u & =g\quad\text{on }\R_{<0}.\label{eq:delay_hyper}
\end{align}
Here, $u$ attains values in $L_{2}(\Omega)$ for some open set $\Omega\subseteq\R^{n}$
as underlying domain, $h_{0},\ldots,h_{n}>0$ are given real numbers
and $k,c_{0},\ldots,c_{n}$ are bounded operators on $L_{2}(\Omega)^{n}$
and $L_{2}(\Omega),$ respectively. The operators $\grad$ and $\dive$
denote the usual gradient and divergence with respect to the spatial
variables and will be introduced rigorously later. It is our first
goal to rewrite this equation as a suitable evolutionary problem.
For doing so, we need the following definition.
\begin{defn*}
Let $c_{0},\ldots,c_{n}\in L(L_{2}(\Omega))$ and $k\in L(L_{2}(\Omega)^{n})$
selfadjoint such that $k\geq d$ for some $d\in\R_{>0}$. We define
the function $M_{1}:\C\to L(L_{2}(\Omega)\times L_{2}(\Omega)^{n};L_{2}(\Omega))$
by 
\[
M_{1}(z)q\coloneqq c_{0}\e^{-h_{0}z}q_{0}-\sum_{i=1}^{n}c_{i}k^{-1}e^{-h_{i}z}q_{i}\quad(z\in\C,q\in L_{2}(\Omega)\times L_{2}(\Omega)^{n}).
\]
Furthermore, we define $M:\C\setminus\{0\}\to L(L_{2}(\Omega)\times L_{2}(\Omega)^{n})$
by 
\[
M(z)\left(\begin{array}{c}
v\\
q
\end{array}\right)\coloneqq\left(\begin{array}{c}
v+z^{-1}M_{1}(z)q\\
k^{-1}q
\end{array}\right).
\]
\end{defn*}
\begin{rem}
Since $\left(\mathcal{L}_{\rho}\tau_{h}u\right)(t)=\e^{(\i t+\rho)h}\left(\mathcal{L}_{\rho}u\right)(t)$
for each $u\in L_{2,\rho}(\R;H)$ and $t,h\in\R,$ we have that 
\[
M_{1}(\partial_{t,\rho})q=c_{0}\tau_{-h_{0}}q_{0}-\sum_{i=1}^{n}c_{i}k^{-1}\tau_{-h_{i}}q_{i}
\]
for each $q\in L_{2,\rho}(\R;L_{2}(\Omega)^{n}).$ 
\end{rem}

Obviously, the so defined function $M$ is analytic and if we restrict
it to some open half plane $\C_{\Re>\rho_{0}}$ with $\rho_{0}>0$,
it is bounded. Thus, we may consider the operator $M(\partial_{t,\rho})$
for some $\rho>0$. 
\begin{lem}
The function $M$ is regularising.
\end{lem}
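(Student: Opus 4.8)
The plan is to compute $M(\partial_{t,\rho})\chi_{\R_{\geq0}}x$ explicitly, using the block form of $M$ together with the crucial fact that all delays $h_{0},\dots,h_{n}$ are \emph{strictly} positive. By definition one has to show that for every $x\in L_{2}(\Omega)\times L_{2}(\Omega)^{n}$ and every $\rho>\rho_{0}$ the limit $\bigl(M(\partial_{t,\rho})\chi_{\R_{\geq0}}x\bigr)(0+)$ exists. First I would fix such an $x$, write $x=(v,q)$ according to the splitting $L_{2}(\Omega)\times L_{2}(\Omega)^{n}$, and fix $\rho>\rho_{0}$; since $\rho_{0}>0$ this forces $\rho>0$, so that $\chi_{\R_{\geq0}}x\in L_{2,\rho}(\R;L_{2}(\Omega)\times L_{2}(\Omega)^{n})$ and $\partial_{t,\rho}$ is boundedly invertible. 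On the half-plane $\C_{\Re>\rho_{0}}$ the scalar factor $z\mapsto z^{-1}$ is bounded, and since the functional calculus is multiplicative (being pointwise multiplication after the Fourier--Laplace transform), the entry $z^{-1}M_{1}(z)$ of $M(z)$ turns into $\partial_{t,\rho}^{-1}M_{1}(\partial_{t,\rho})$; hence
\[
M(\partial_{t,\rho})\chi_{\R_{\geq0}}x=\Bigl(\chi_{\R_{\geq0}}v+\partial_{t,\rho}^{-1}M_{1}(\partial_{t,\rho})\chi_{\R_{\geq0}}x,\ k^{-1}\chi_{\R_{\geq0}}q\Bigr).
\]

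The second component is $\chi_{\R_{\geq0}}$ times the constant $k^{-1}q$, so its limit at $0+$ is $k^{-1}q$. In the first component, $\chi_{\R_{\geq0}}v$ has limit $v$ at $0+$, so everything comes down to the behaviour of $\partial_{t,\rho}^{-1}M_{1}(\partial_{t,\rho})\chi_{\R_{\geq0}}x$ near $0+$. Here I would invoke the representation of $M_{1}(\partial_{t,\rho})$ from the preceding Remark, according to which $M_{1}(\partial_{t,\rho})\chi_{\R_{\geq0}}x$ is a finite linear combination, with bounded operator coefficients, of the shifted functions $\tau_{-h_{i}}\bigl(\chi_{\R_{\geq0}}x_{i}\bigr)=\chi_{\R_{\geq h_{i}}}x_{i}$, where $x_{i}$ denotes the $i$-th component of $x$. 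Each such summand is supported in $[h_{i},\infty)$, so $M_{1}(\partial_{t,\rho})\chi_{\R_{\geq0}}x$ vanishes almost everywhere on $(-\infty,h)$, where $h\coloneqq\min\{h_{0},\dots,h_{n}\}>0$.

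The last step is to apply $\partial_{t,\rho}^{-1}$. Since $\rho>0$, \prettyref{prop:td}(c) gives $\bigl(\partial_{t,\rho}^{-1}f\bigr)(t)=\int_{-\infty}^{t}f(s)\d s$, and \prettyref{prop:td}(e) shows that $\partial_{t,\rho}^{-1}f$ has a continuous representative. Taking $f\coloneqq M_{1}(\partial_{t,\rho})\chi_{\R_{\geq0}}x$, which vanishes on $(-\infty,h)$, we conclude that $\partial_{t,\rho}^{-1}f$ is continuous and vanishes on $(-\infty,h)$ as well; in particular its limit at $0+$ is $0$. Putting the two components together,
\[
\bigl(M(\partial_{t,\rho})\chi_{\R_{\geq0}}x\bigr)(0+)=\bigl(v,\ k^{-1}q\bigr),
\]
which exists, so $M$ is regularising. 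I do not expect a serious obstacle here; the only points needing a little care are that the functional-calculus identity for $z^{-1}M_{1}(z)$ is used on a right half-plane $\C_{\Re>\rho_{0}}$ with $\rho_{0}>0$ (so that $z^{-1}$ is bounded there), and that it is exactly the strict positivity of the $h_{i}$ which makes the delay contribution vanish on a neighbourhood of $0$ and hence, after one time integration, still have vanishing limit at $0+$.
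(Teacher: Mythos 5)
Your argument is correct and rests on the same decomposition as the paper: the second component and the term $\chi_{\R_{\geq0}}\check{x}$ are immediate, and everything reduces to showing that $\partial_{t,\rho}^{-1}M_{1}(\partial_{t,\rho})\chi_{\R_{\geq0}}x$ has limit $0$ at $0+$. You finish this last step differently: you use the explicit representation of $M_{1}(\partial_{t,\rho})$ as a finite combination of the shifts $\tau_{-h_{i}}$ (the Remark preceding the lemma) to see that $M_{1}(\partial_{t,\rho})\chi_{\R_{\geq0}}x$ vanishes on $(-\infty,\min_{i}h_{i})$ and then integrate, whereas the paper only invokes the abstract causality of $M_{1}(\partial_{t,\rho})$ from \prettyref{prop:material law}, which gives support in $\R_{\geq0}$, and combines this with the continuity of $\partial_{t,\rho}^{-1}M_{1}(\partial_{t,\rho})\chi_{\R_{\geq0}}x$ guaranteed by \prettyref{prop:td} (e) to conclude that the limit at $0+$ is $0$. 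Both routes are valid; yours is more concrete, the paper's is shorter and more robust. In particular, your closing remark that it is \emph{exactly} the strict positivity of the $h_{i}$ which makes the delay contribution harmless overstates matters: causality alone suffices, since a continuous function supported in $\R_{\geq0}$ already has limit $0$ at $0+$, so the same conclusion would hold even if some of the delays were allowed to vanish.
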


\begin{proof}
We need to prove that $\left(M(\partial_{t,\rho})\chi_{\R_{\geq0}}x\right)(0+)$
exists for all $x=(\check{x},\hat{x})\in L_{2}(\Omega)\times L_{2}(\Omega)^{n}$
and $\rho>0$. We have that 
\[
M(\partial_{t,\rho})\chi_{\R_{\geq0}}x=\left(\begin{array}{c}
\chi_{\R_{\geq0}}\check{x}+\partial_{t,\rho}^{-1}M_{1}(\partial_{t,\rho})\chi_{\R_{\geq0}}x\\
k^{-1}\hat{x}
\end{array}\right)
\]
and since $M_{1}(\partial_{t,\rho})$ is causal, we infer that $\partial_{t,\rho}^{-1}M_{1}(\partial_{t,\rho})\chi_{\R_{\geq0}}x\in H_{\rho}^{1}(\R;L_{2}(\Omega))$
is supported on $\R_{>0}$ and hence, $\left(\partial_{t,\rho}^{-1}M_{1}(\partial_{t,\rho})\chi_{\R_{\geq0}}x\right)(0+)=0.$
Thus, $M$ is regularising.
\end{proof}
We now rewrite \prettyref{eq:delay_hyper} as an evolutionary equation.
We introduce $v\coloneqq\partial_{t,\rho}u$ and $q\coloneqq k\grad u$
as new unknowns, and rewrite \prettyref{eq:delay_hyper} as 
\begin{equation}
\left(\partial_{t,\rho}M(\partial_{t,\rho})+\left(\begin{array}{cc}
0 & \dive\\
\grad & 0
\end{array}\right)\right)\left(\begin{array}{c}
v\\
q
\end{array}\right)=0\quad\text{on }\R_{>0}.\label{eq:delay_hyper_evo}
\end{equation}

Of course \prettyref{eq:delay_hyper} has to be completed by suitable
boundary conditions. This will be done by introducing the differential
operators $\dive$ and $\grad$ in a suitable way.
\begin{defn*}
We define $\grad_{0}:\dom(\grad_{0})\subseteq L_{2}(\Omega)\to L_{2}(\Omega)^{n}$
as the closure of the operator 
\[
C_{c}^{\infty}(\Omega)\subseteq L_{2}(\Omega)\to L_{2}(\Omega)^{n},\:\varphi\mapsto\left(\partial_{j}\varphi\right)_{j\in\{1,\ldots,n\}}
\]
and similarly $\dive_{0}:\dom(\dive_{0})\subseteq L_{2}(\Omega)^{n}\to L_{2}(\Omega)$
as the closure of 
\[
C_{c}^{\infty}(\Omega)^{n}\subseteq L_{2}(\Omega)^{n}\to L_{2}(\Omega),\;(\varphi_{j})_{j\in\{1,\ldots,n\}}\mapsto\sum_{j=1}^{n}\partial_{j}\varphi_{j}.
\]
Moreover, we set 
\begin{align*}
\grad & \coloneqq-(\dive_{0})^{\ast}\\
\dive & \coloneqq-(\grad_{0})^{\ast}.
\end{align*}
\end{defn*}
\begin{rem}
We note that $\dom(\grad_{0})$ coincides with the classical Sobolev
space $H_{0}^{1}(\Omega)$ of weakly differentiable $L_{2}$-functions
with vanishing Dirichlet trace. Moreover, $\dom(\grad)$ is nothing
but the Sobolev space $H^{1}(\Omega)$. Thus, by Green's formula,
elements in $\dom(\dive_{0})$ may be interpreted as elements in $L_{2}(\Omega)^{n}$
with distributional divergence also lying in $L_{2}(\Omega)$ and
whose normal trace vanishes, while elements in $\dom(\dive)$ are
just $L_{2}(\Omega)$ vector fields with $L_{2}(\Omega)$-divergence.
Note however, that these definitions are meaningful for arbitrary
open sets $\Omega$ and do not require any boundary regularity.
\end{rem}

Thus, by replacing $\dive$ by $\dive_{0}$ or $\grad$ by $\grad_{0}$
in \prettyref{eq:delay_hyper_evo}, we can model homogeneous Neumann-
or Dirichlet conditions, respectively. 
\begin{lem}
We set 
\[
A_{N}\coloneqq\left(\begin{array}{cc}
0 & \dive_{0}\\
\grad & 0
\end{array}\right):\dom(\grad)\times\dom(\dive_{0})\subseteq L_{2}(\Omega)\times L_{2}(\Omega)^{n}\to L_{2}(\Omega)\times L_{2}(\Omega)^{n}
\]
and 
\[
A_{D}\coloneqq\left(\begin{array}{cc}
0 & \dive\\
\grad_{0} & 0
\end{array}\right):\dom(\grad_{0})\times\dom(\dive)\subseteq L_{2}(\Omega)\times L_{2}(\Omega)^{n}\to L_{2}(\Omega)\times L_{2}(\Omega)^{n}.
\]
Then both operators are skew-selfadjoint, i.e. $A_{N}^{\ast}=-A_{N}$
and $A_{D}^{\ast}=-A_{D}.$ 
\end{lem}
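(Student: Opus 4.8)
The plan is to show that both $A_N$ and $A_D$ are skew-selfadjoint by computing their adjoints directly in terms of the adjoint relations between $\grad_0,\dive_0,\grad,\dive$. The key algebraic input is the pair of defining identities $\grad = -(\dive_0)^\ast$ and $\dive = -(\grad_0)^\ast$, together with the fact that $\grad_0$ and $\dive_0$ are closed (being closures of densely defined operators), so that $(\grad_0)^{\ast\ast} = \grad_0$ and $(\dive_0)^{\ast\ast} = \dive_0$; equivalently $\grad^\ast = -\dive_0$ and $\dive^\ast = -\grad_0$.

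First I would treat $A_N$. Writing $A_N = \begin{pmatrix} 0 & \dive_0 \\ \grad & 0\end{pmatrix}$ as a block operator on $L_2(\Omega)\times L_2(\Omega)^n$, I would use the standard fact that for such a $2\times 2$ block operator (with densely defined entries), the adjoint is obtained by transposing the block matrix and taking the adjoint of each entry, \emph{provided one checks the domain of the adjoint is exactly the claimed product domain}. Formally,
\[
A_N^\ast = \begin{pmatrix} 0 & \grad^\ast \\ (\dive_0)^\ast & 0\end{pmatrix} = \begin{pmatrix} 0 & -\dive_0 \\ -\grad & 0\end{pmatrix} = -A_N,
\]
using $\grad^\ast = -\dive_0$ and $(\dive_0)^\ast = -\grad$. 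The analogous computation for $A_D = \begin{pmatrix} 0 & \dive \\ \grad_0 & 0\end{pmatrix}$ gives
\[
A_D^\ast = \begin{pmatrix} 0 & (\grad_0)^\ast \\ \dive^\ast & 0\end{pmatrix} = \begin{pmatrix} 0 & -\dive \\ -\grad_0 & 0\end{pmatrix} = -A_D,
\]
using $(\grad_0)^\ast = -\dive$ and $\dive^\ast = -\grad_0$.

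The main obstacle — and the part that requires genuine care rather than formal manipulation — is the domain identification: one must verify that $\dom(A_N^\ast)$ is \emph{exactly} $\dom(\grad)\times\dom(\dive_0)$, and not something larger. The inclusion $\dom(\grad)\times\dom(\dive_0)\subseteq\dom(A_N^\ast)$ is immediate from the adjoint relations. For the reverse inclusion, suppose $(w,p)\in\dom(A_N^\ast)$, i.e. there is $(f,g)$ with $\langle A_N(v,q),(w,p)\rangle = \langle(v,q),(f,g)\rangle$ for all $(v,q)\in\dom(\grad)\times\dom(\dive_0)$. Testing with $q=0$ and $v$ ranging over $\dom(\grad)$ forces $p\in\dom(\grad^\ast)=\dom(\dive_0)$ with $\dive_0 p = g$; testing with $v=0$ and $q$ ranging over $\dom(\dive_0)$ forces $w\in\dom((\dive_0)^\ast)=\dom(\grad)$ with $\grad w = -f$. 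Hence $(w,p)\in\dom(\grad)\times\dom(\dive_0)$, as claimed. The argument for $A_D$ is verbatim the same with the roles of the closed and the adjoint operators interchanged. I would remark that this block-operator skew-selfadjointness is entirely standard (it is exactly the structure underlying the first-order formulation of the wave equation), so the write-up can be kept brief, citing the adjoint relations and the closedness of $\grad_0,\dive_0$ as the only inputs.
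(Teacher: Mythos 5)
Your argument is correct and is essentially the paper's proof spelled out: the paper simply states that the claim "follows immediately by the definitions of the differential operators", i.e.\ from $\grad=-(\dive_{0})^{\ast}$, $\dive=-(\grad_{0})^{\ast}$ and the closedness of $\grad_{0},\dive_{0}$, which is exactly the input you use, together with the (correct and worthwhile) explicit verification that $\dom(A_{N}^{\ast})$ is not larger than $\dom(\grad)\times\dom(\dive_{0})$. Only a bookkeeping slip in your reverse-inclusion step: with your pairing the tests give $p\in\dom(\grad^{\ast})=\dom(\dive_{0})$ with $\dive_{0}p=-f$ and $w\in\dom((\dive_{0})^{\ast})=\dom(\grad)$ with $\grad w=-g$ (you swapped $f$ and $g$), which does not affect the domain identification or the conclusion $A_{N}^{\ast}=-A_{N}$.
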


\begin{proof}
The claim follows immediately by the definitions of the differential
operators.
\end{proof}
We now prove that the evolutionary problems associated with $(M,A_{D/N})$
are well-posed. 
\begin{prop}
\label{prop:well_posed_delay}Let $c_{0},\ldots,c_{n}\in L(L_{2}(\Omega))$
and $k\in L(L_{2}(\Omega)^{n})$ selfadjoint such that $k\geq d$
for some $d\in\R_{>0}$. Then the evolutionary problems associated
with $(M,A_{D/N})$ are well-posed.
\end{prop}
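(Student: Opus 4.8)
The plan is to reduce well-posedness to the classical strict positive-definiteness criterion for evolutionary equations with skew-selfadjoint spatial part (cf.\ the solution theory in \cite{Picard} or \cite[Chapter 6]{Picard_McGhee}): if $B:\dom(B)\subseteq K\to K$ is densely defined and closed on a Hilbert space $K$ and there is $c>0$ with $\Re\langle B\xi,\xi\rangle\geq c\|\xi\|^{2}$ for all $\xi\in\dom(B)$ and $\Re\langle B^{\ast}\eta,\eta\rangle\geq c\|\eta\|^{2}$ for all $\eta\in\dom(B^{\ast})$, then $B$ is boundedly invertible with $\|B^{-1}\|\leq c^{-1}$ (the two estimates give $\|B\xi\|\geq c\|\xi\|$ and $\|B^{\ast}\eta\|\geq c\|\eta\|$, so $B$ is injective with closed and dense range). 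We apply this with $B\coloneqq zM(z)+A_{D/N}$, which is closed as the sum of the bounded operator $zM(z)$ and the skew-selfadjoint (hence densely defined, closed) operator $A_{D/N}$. Since $A_{D/N}^{\ast}=-A_{D/N}$ we have $(zM(z)+A_{D/N})^{\ast}=\overline{z}M(z)^{\ast}-A_{D/N}$, with $\Re\langle A_{D/N}\xi,\xi\rangle=0$ on $\dom(A_{D/N})$ and $\Re\langle\overline{z}M(z)^{\ast}\xi,\xi\rangle=\Re\langle zM(z)\xi,\xi\rangle$. Hence it suffices to exhibit $\rho_{1}>\rho_{0}$ and $c>0$ with
\[
\Re\langle zM(z)\xi,\xi\rangle\geq c\|\xi\|^{2}\qquad(\xi\in L_{2}(\Omega)\times L_{2}(\Omega)^{n},\ z\in\C_{\Re\geq\rho_{1}});
\]
then $B$ and $B^{\ast}$ satisfy the criterion with the same, $z$-independent $c$, so the problems associated with $(M,A_{D})$ and $(M,A_{N})$ are well-posed with $s_{0}(M,A_{D/N})\leq\rho_{1}$.

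To establish the estimate I would write $\xi=(v,q)$ with $v\in L_{2}(\Omega)$, $q\in L_{2}(\Omega)^{n}$, and set $\rho\coloneqq\Re z$. The decisive observation is that the factor $z^{-1}$ in the definition of $M$ cancels against $z$, so that
\[
zM(z)\binom{v}{q}=\binom{zv+M_{1}(z)q}{zk^{-1}q},
\]
where $M_{1}(z)$ is bounded uniformly for $z\in\C_{\Re>\rho_{0}}$, say $\|M_{1}(z)\|\leq C_{M}$ with $C_{M}$ depending only on $\|c_{0}\|,\dots,\|c_{n}\|$ and $\|k^{-1}\|$, because $|\e^{-h_{j}z}|=\e^{-h_{j}\rho}\leq 1$ for $\rho>0$. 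Using that $k=k^{\ast}\geq d>0$ forces $k^{-1}\geq\|k\|^{-1}$, we get
\[
\Re\langle zM(z)\xi,\xi\rangle=\rho\|v\|^{2}+\rho\langle k^{-1}q,q\rangle+\Re\langle M_{1}(z)q,v\rangle\geq\rho\|v\|^{2}+\tfrac{\rho}{\|k\|}\|q\|^{2}-C_{M}\|q\|\,\|v\|.
\]
Young's inequality $C_{M}\|q\|\,\|v\|\leq\tfrac{\rho}{2}\|v\|^{2}+\tfrac{C_{M}^{2}}{2\rho}\|q\|^{2}$ then yields
\[
\Re\langle zM(z)\xi,\xi\rangle\geq\tfrac{\rho}{2}\|v\|^{2}+\Big(\tfrac{\rho}{\|k\|}-\tfrac{C_{M}^{2}}{2\rho}\Big)\|q\|^{2},
\]
and choosing $\rho_{1}\coloneqq\max\{\rho_{0}+1,\ C_{M}\|k\|^{1/2}\}$ makes the bracket at least $\tfrac{\rho}{2\|k\|}$ for every $\rho\geq\rho_{1}$, so that
\[
\Re\langle zM(z)\xi,\xi\rangle\geq\frac{\rho_{1}}{2\max\{1,\|k\|\}}\,\|\xi\|^{2}\eqqcolon c\|\xi\|^{2}\qquad(z\in\C_{\Re\geq\rho_{1}}).
\]

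The argument is essentially routine once the abstract criterion is in place; the only point requiring genuine care is the \emph{uniformity} of the coercivity constant $c$ over the whole half-plane $\C_{\Re\geq\rho_{1}}$. This is exactly where the hyperbolic structure enters: the delay coupling $M_{1}(z)$ stays bounded (indeed decays exponentially) as $\Re z\to\infty$, while the diagonal part of $zM(z)$ — namely $z$ on the first and $zk^{-1}\geq\|k\|^{-1}z$ on the second component — grows linearly in $\Re z$. Hence past the threshold $\rho_{1}$ the accretive diagonal dominates the coupling uniformly, and $c$ can be taken independent of $z$. The only other thing to be mindful of is that $M$ is defined and bounded only on half-planes $\C_{\Re>\rho_{0}}$ with $\rho_{0}>0$, which is harmless since $\rho_{1}\geq\rho_{0}+1>\rho_{0}$.
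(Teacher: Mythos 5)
Your proposal is correct and follows essentially the same route as the paper: exploit skew-selfadjointness of $A_{D/N}$ and positivity of $k^{-1}$ to show uniform strict accretivity of $zM(z)+A_{D/N}$ on a right half-plane, absorbing the delay coupling $M_{1}(z)$ (the paper uses $\|M_{1}(z)\|\to0$ as $\Re z\to\infty$, you use a uniform bound plus Young's inequality — an immaterial difference). You merely make explicit the standard accretivity-of-$B$-and-$B^{\ast}$ criterion that the paper invokes implicitly when it concludes well-posedness from the coercivity estimate.
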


\begin{proof}
We first note that $k^{-1}$ is selfadjoint and satisfies $k^{-1}\geq\frac{1}{\|k\|}.$
Moreover, since $A_{D/N}$ is skew-selfadjoint, we infer that 
\[
\Re\langle A_{D/N}x,x\rangle=0\quad(x\in\dom(A_{D/N})).
\]
Hence, we may estimate for $x=(x_{1},x_{2})\in\dom(A_{D/N})$
\begin{align*}
\Re\langle(zM(z)+A_{D/N})x,x\rangle & =\Re\langle zM(z)x,x\rangle\\
 & =\Re\langle zx_{1},x_{1}\rangle+\Re\langle zk^{-1}x_{2},x_{2}\rangle+\Re\langle M_{1}(z)x,x_{1}\rangle\\
 & \geq\Re z\min\{1,\frac{1}{\|k\|}\}\|x\|^{2}-\|M_{1}(z)\|\|x\|^{2}.
\end{align*}
Moreover, we estimate 
\[
\|M_{1}(z)\|\leq\|c_{0}\|\e^{-h_{0}\Re z}+\sum_{i=1}^{n}\|c_{i}\|\|k^{-1}\|\e^{-h_{i}\Re z}
\]
and hence, we infer that $\|M_{1}(z)\|\to0$ as $\Re z\to\infty.$
Thus, we find $c>0$ and $\rho_{0}>0$ such that 
\[
\Re\langle(zM(z)+A_{D/N})x,x\rangle\geq c\|x\|^{2}\quad(z\in\C_{\Re\geq\rho_{0}}),
\]
which yields the well-posedness for the evolutionary problem associated
with $(M,A_{D/N}).$ 
\end{proof}
\begin{rem}
We note that the above proof also works for $m$-accretive operators
$A$ instead of $A_{D/N}.$ This allows for the treatment of more
general boundary conditions and we refer to \cite{Trostorff2013_bd_maxmon}
for a characterisation result about those boundary conditions (including
also nonlinear ones).
\end{rem}

Having these results at hand, we are now in the position to consider
the history space for \prettyref{eq:delay_hyper_evo}. From now on,
to avoid cluttered notation, we will simply write $A$ and note that
$A$ can be replaced by $A_{N}$ and $A_{D}$, respectively.
\begin{prop}
\label{prop:His_delay}Let $\rho>s_{0}(M,A)$. Then 
\[
\Gamma_{\rho}g=\left(\begin{array}{cc}
1 & 0\\
0 & k^{-1}
\end{array}\right)g(0-),\quad K_{\rho}g=\chi_{\R_{\geq0}}(\m)\left(\begin{array}{c}
M_{1}(\partial_{t,\rho})\\
0
\end{array}\right)g
\]
for each $g\in H_{\rho}^{1}(\R_{\leq0};L_{2}(\Omega)\times L_{2}(\Omega)^{n}).$
Moreover, 
\begin{equation}
\left\{ g\in H_{\rho}^{1}(\R_{\leq0};\dom(A))\:;\:\forall j\in\{0,\ldots,n\}:g(-t_{j})=0,\:\left(\begin{array}{cc}
1 & 0\\
0 & k
\end{array}\right)Ag(0-)\in\dom(A)\right\} \subseteq\His_{\rho}\label{eq:subset_D_g}
\end{equation}
and consequently, 
\[
X_{\rho}^{\mu}=\left(L_{2}(\Omega)\times L_{2}(\Omega)^{n}\right)\times L_{2,\mu}(\R_{\leq0};L_{2}(\Omega)\times L_{2}(\Omega)^{n})
\]
for each $\mu\leq\rho.$ 
\end{prop}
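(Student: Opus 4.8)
The plan is to first compute $\Gamma_\rho$ and $K_\rho$ explicitly using the special block structure of $M$, then verify the inclusion \prettyref{eq:subset_D_g}, and finally deduce the identification of $X_\rho^\mu$ from \prettyref{lem:His_DAE}-style density arguments. For the formula for $\Gamma_\rho g$, I would compute $M(\partial_{t,\rho})\chi_{\R_{\geq0}}x$ for $x=(\check x,\hat x)$, as was already done in the proof that $M$ is regularising: the second component is simply $\chi_{\R_{\geq0}}k^{-1}\hat x$, while the first component is $\chi_{\R_{\geq0}}\check x$ plus $\partial_{t,\rho}^{-1}M_1(\partial_{t,\rho})\chi_{\R_{\geq0}}x$, whose right limit at $0$ vanishes by causality. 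Evaluating at $0+$ therefore yields $\Gamma_\rho g=(\,g_1(0-),\,k^{-1}g_2(0-)\,)$, which is the asserted block-diagonal matrix applied to $g(0-)$. For $K_\rho g = P_0\partial_{t,\rho}M(\partial_{t,\rho})g$, I would use the definition of $M$ to write $M(\partial_{t,\rho})g = (g_1 + \partial_{t,\rho}^{-1}M_1(\partial_{t,\rho})g,\ k^{-1}g_2)$, apply $\partial_{t,\rho}$, and then $P_0$; the contributions from $\partial_{t,\rho}g_1$ and $\partial_{t,\rho}k^{-1}g_2$ lie entirely in $\R_{\leq0}$ (since $g$ is a history function) so $P_0$ annihilates them, leaving exactly $\chi_{\R_{\geq0}}(\m)M_1(\partial_{t,\rho})g$ in the first component (the boundary term cancels because $M_1(\partial_{t,\rho})g$ is continuous with vanishing right limit at $0$), which is the stated formula.

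Next, to establish the inclusion \prettyref{eq:subset_D_g}, I would take $g$ in the set on the left: $g\in H_\rho^1(\R_{\leq0};\dom(A))$ with $g(-h_j)=0$ for all $j$ and with $\bigl(\begin{smallmatrix}1&0\\0&k\end{smallmatrix}\bigr)Ag(0-)\in\dom(A)$. The condition $g(-h_j)=0$ is designed precisely so that the delay terms $\tau_{-h_j}$ applied to $g$ (which sample $g$ near $-h_j$) produce functions that are still in $H_\rho^1(\R;\cdot)$ after being cut off at $0$, so that $K_\rho g$ is regular enough. I would then need to check that $v\coloneqq S_\rho(\Gamma_\rho g\,\delta_0 - K_\rho g)$ satisfies $v+g\in H_\rho^1(\R;L_2(\Omega)\times L_2(\Omega)^n)$. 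The strategy here mirrors \prettyref{prop:IV_DAE}: one writes $(\partial_{t,\rho}M(\partial_{t,\rho})+A)(v+g-\text{correction})$ and shows it lies in $L_{2,\rho}$, using the skew-selfadjointness of $A$, the fact that $g$ takes values in $\dom(A)$, and the extra regularity hypothesis $\bigl(\begin{smallmatrix}1&0\\0&k\end{smallmatrix}\bigr)Ag(0-)\in\dom(A)$ to control the $\delta_0$-singularity; concretely, the Dirac term $\Gamma_\rho g\,\delta_0$ gets absorbed by subtracting an appropriate $\chi_{\R_{\geq0}}$-multiple of a $\dom(A)$-element, exactly as in the DAE case where $Ax=Ey$ was used.

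Finally, for the identification $X_\rho^\mu = (L_2(\Omega)\times L_2(\Omega)^n)\times L_{2,\mu}(\R_{\leq0};L_2(\Omega)\times L_2(\Omega)^n)$, I would argue that $\IV_\rho$ is dense in the full space $L_2(\Omega)\times L_2(\Omega)^n$: since $A=A_{D/N}$ is skew-selfadjoint (in particular $0\in\rho(zM(z)+A)$ for large $\Re z$ and $A$ has dense range in a strong sense), the set $U=\{x\in\dom(A): \exists y\in\dom(A), Ax = (\text{appropriate }E)y\}$ from \prettyref{prop:IV_DAE} — here there is no genuine algebraic constraint because the relevant "$E$" is boundedly invertible on the leading part — is dense. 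More directly, elements of the form described in \prettyref{eq:subset_D_g} already have initial values $g(0-)$ ranging over a dense subset of $\dom(A)$ (hence of $L_2(\Omega)\times L_2(\Omega)^n$), so $\overline{\IV_\rho}$ is everything; combined with the density of $H_\rho^1(\R_{\leq0};\cdot)$-histories vanishing near the $-h_j$ in $L_{2,\mu}(\R_{\leq0};\cdot)$ and the approximation trick from \prettyref{lem:His_DAE} (prepending a small ramp $\psi_n$ to realise any prescribed initial value with $L_{2,\mu}$-norm tending to $0$), one concludes $\overline{D_\rho}^{H\times L_{2,\mu}} = $ the full product. The main obstacle I anticipate is the regularity bookkeeping in verifying \prettyref{eq:subset_D_g}: one must carefully track how the cut-off operator $P_0$ interacts with the delayed terms $\tau_{-h_j}$ and confirm that the conditions $g(-h_j)=0$ together with the $\dom(A)$-condition on $Ag(0-)$ are exactly enough to land $v+g$ in $H_\rho^1(\R;\cdot)$ without extraneous jumps or singular parts.
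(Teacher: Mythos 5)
Your overall architecture coincides with the paper's: the formulas for $\Gamma_{\rho}$ and $K_{\rho}$ are obtained exactly as you describe, the inclusion \prettyref{eq:subset_D_g} is proved by subtracting $\chi_{\R_{\geq0}}g(0-)$ so that the Dirac term is absorbed (as in \prettyref{prop:IV_DAE}), and the identification of $X_{\rho}^{\mu}$ follows by the ramp-function approximation of \prettyref{lem:His_DAE}. One small slip first: the step $P_{0}\bigl(M_{1}(\partial_{t,\rho})g\bigr)=\chi_{\R_{\geq0}}(\m)M_{1}(\partial_{t,\rho})g$ holds because $P_{0}$ restricted to $L_{2,\rho}(\R;H)$ \emph{is} multiplication by $\chi_{\R_{\geq0}}$ (the lemma at the start of \prettyref{sec:Cut-off-operators}), not because ``$M_{1}(\partial_{t,\rho})g$ has vanishing right limit at $0$''; that limit equals $c_{0}g_{1}(-h_{0})-\sum_{i}c_{i}k^{-1}g_{2,i}(-h_{i})$ and is nonzero for general $g$, and the boundary term in the definition of $P_{0}$ involves $\partial_{t,\rho}^{-1}f$, not $f$, anyway.

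The genuine gap is in your verification of \prettyref{eq:subset_D_g}. You claim that $g(-h_{j})=0$ makes $\chi_{\R_{\geq0}}(\m)\tau_{-h_{j}}g$, hence $K_{\rho}g$, regular; this is false in general: $g$ enters as its extension by zero, which jumps by $g(0-)$ at $t=0$, and after the shift $\tau_{-h_{j}}$ this jump sits at $t=h_{j}>0$ and is \emph{not} removed by the cut-off at $0$, so $K_{\rho}g\notin H_{\rho}^{1}$ whenever $g(0-)\neq0$ -- which is precisely the interesting case. The paper never needs regularity of $K_{\rho}g$ alone: after subtracting $\chi_{\R_{\geq0}}g(0-)$, the delay term that must be controlled is $\chi_{\R_{\geq0}}(\m)M_{1}(\partial_{t,\rho})\bigl(g+\chi_{\R_{\geq0}}g(0-)\bigr)$, whose argument has no jump at $0$, so the only potential jump created by the cut-off is at $0$ and is killed exactly by $g(-h_{j})=0$; the remaining term $S_{\rho}\bigl(\chi_{\R_{\geq0}}Ag(0-)\bigr)$ is then treated by writing $Ag(0-)=Ex$ with $x=\operatorname{diag}(1,k)Ag(0-)\in\dom(A)$ and commuting with $\partial_{t,\rho}^{-1}$, as in \prettyref{prop:IV_DAE}. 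Relatedly, your stated target -- showing that $(\partial_{t,\rho}M(\partial_{t,\rho})+A)$ applied to the corrected function lies in $L_{2,\rho}$ -- is too weak: membership in $\His_{\rho}$ requires $v+g\in H_{\rho}^{1}(\R;H)$, so you must exhibit the corrected right-hand side in $H_{\rho}^{1}$ and use $S_{\rho}[H_{\rho}^{1}]\subseteq H_{\rho}^{1}$ (\prettyref{prop:extension_sol_op}); skew-selfadjointness of $A$ plays no role in this step. Your density argument for $X_{\rho}^{\mu}$ is fine once the inclusion is repaired along these lines.
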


\begin{proof}
Let $g\in H_{\rho}^{1}(\R_{\leq0};L_{2}(\Omega)\times L_{2}(\Omega)^{n})$.
Then
\begin{align*}
\Gamma_{\rho}g & =\left(\left(\left(\begin{array}{cc}
1 & 0\\
0 & k^{-1}
\end{array}\right)+\partial_{t,\rho}^{-1}\left(\begin{array}{c}
M_{1}(\partial_{t,\rho})\\
0
\end{array}\right)\right)\chi_{\R_{\geq0}}g(0-)\right)(0+)\\
 & =\left(\left(\begin{array}{cc}
1 & 0\\
0 & k^{-1}
\end{array}\right)\chi_{\R_{\geq0}}g(0-)\right)(0+)\\
 & =\left(\begin{array}{cc}
1 & 0\\
0 & k^{-1}
\end{array}\right)g(0-),
\end{align*}
where we have used 
\[
\partial_{t,\rho}^{-1}\left(\begin{array}{c}
M_{1}(\partial_{t,\rho})\\
0
\end{array}\right)\chi_{\R_{\geq0}}g(0-)\in H_{\rho}^{1}(\R;H)
\]
and hence 
\begin{align*}
\left(\partial_{t,\rho}^{-1}\left(\begin{array}{c}
M_{1}(\partial_{t,\rho})\\
0
\end{array}\right)\chi_{\R_{\geq0}}g(0-)\right)(0+) & =\left(\partial_{t,\rho}^{-1}\left(\begin{array}{c}
M_{1}(\partial_{t,\rho})\\
0
\end{array}\right)\chi_{\R_{\geq0}}g(0-)\right)(0-)=0
\end{align*}
by causality. Moreover, 
\begin{align*}
K_{\rho}g & =P_{0}\partial_{t,\rho}M(\partial_{t,\rho})g\\
 & =P_{0}\partial_{t,\rho}\left(\begin{array}{cc}
1 & 0\\
0 & k^{-1}
\end{array}\right)g+\chi_{\R_{\geq0}}(\m)\left(\begin{array}{c}
M_{1}(\partial_{t,\rho})\\
0
\end{array}\right)g\\
 & =\chi_{\R_{\geq0}}(\m)\left(\begin{array}{c}
M_{1}(\partial_{t,\rho})\\
0
\end{array}\right)g,
\end{align*}
since 
\[
\spt\partial_{t,\rho}\left(\begin{array}{cc}
1 & 0\\
0 & k^{-1}
\end{array}\right)g\subseteq\R_{\leq0}
\]
and thus, $P_{0}\partial_{t,\rho}\left(\begin{array}{cc}
1 & 0\\
0 & k^{-1}
\end{array}\right)g=0$ by \prettyref{prop:properties_cut_off} (c). Let now $g$ be an element
of the set on the left hand side of \prettyref{eq:subset_D_g}. Then,
we compute 
\begin{align*}
 & S_{\rho}\left(\delta_{0}\left(\begin{array}{cc}
1 & 0\\
0 & k^{-1}
\end{array}\right)g(0-)-\chi_{\R_{\geq0}}(\m)\left(\begin{array}{c}
M_{1}(\partial_{t,\rho})\\
0
\end{array}\right)g\right)-\chi_{\R_{\geq0}}g(0-)\\
= & S_{\rho}\left(\delta_{0}\left(\begin{array}{cc}
1 & 0\\
0 & k^{-1}
\end{array}\right)g(0-)-\chi_{\R_{\geq0}}(\m)\left(\begin{array}{c}
M_{1}(\partial_{t,\rho})\\
0
\end{array}\right)g-\right.\\
 & \left.\quad-\partial_{t,\rho}\left(\begin{array}{cc}
1 & 0\\
0 & k^{-1}
\end{array}\right)\chi_{\R_{\ge0}}g(0-)-\left(\begin{array}{c}
M_{1}(\partial_{t,\rho})\\
0
\end{array}\right)\chi_{\R_{\geq0}}g(0-)-\chi_{\R_{\geq0}}Ag(0-)\right)\\
= & -S_{\rho}\left(\chi_{\R_{\geq0}}(\m)\left(\begin{array}{c}
M_{1}(\partial_{t,\rho})\\
0
\end{array}\right)(g+\chi_{\R_{\geq0}}g(0-))\right)-S_{\rho}\left(\chi_{\R_{\geq0}}Ag(0-)\right).
\end{align*}
We now treat both terms separately. We note that 
\[
\left(\chi_{\R_{\geq0}}(\m)c_{j}\tau_{-h_{j}}f_{j}\right)(t)=\begin{cases}
c_{j}f_{j}(t-h_{j}) & \text{ if }t\geq0,\\
0 & \text{ otherwise}
\end{cases}
\]
for $f_{j}\in H_{\rho}^{1}(\R;L_{2}(\Omega))$ and thus, $\chi_{\R_{\geq0}}(\m)c_{j}\tau_{-h_{j}}f_{j}\in H_{\rho}^{1}(\R;L_{2}(\Omega))$
if $f_{j}(-h_{j})=0.$ Thus, by the constraints on $g$, we infer
that 
\[
\chi_{\R_{\geq0}}(\m)\left(\begin{array}{c}
M_{1}(\partial_{t,\rho})\\
0
\end{array}\right)(g+\chi_{\R_{\geq0}}g(0-))\in H_{\rho}^{1}(\R;L_{2}(\Omega)\times L_{2}(\Omega)^{n}).
\]
Thus, we are left to consider the last term. By assumption, we find
$x\in\dom(A)$ with $Ag(0-)=\left(\begin{array}{cc}
1 & 0\\
0 & k^{-1}
\end{array}\right)x$ and hence, 
\begin{align*}
S_{\rho}\left(\chi_{\R_{\geq0}}Ag(0-)\right) & =\partial_{t,\rho}^{-1}S_{\rho}\left(\partial_{t,\rho}\left(\begin{array}{cc}
1 & 0\\
0 & k^{-1}
\end{array}\right)\chi_{\R_{\geq0}}x\right)\\
 & =\partial_{t,\rho}^{-1}\left(\chi_{\R_{\geq0}}x-S_{\rho}\left(\left(\left(\begin{array}{c}
M_{1}(\partial_{t,\rho})\\
0
\end{array}\right)+A\right)\chi_{\R_{\geq0}}x\right)\right)\\
 & \in H_{\rho}^{1}(\R;L_{2}(\Omega)\times L_{2}(\Omega)^{n}),
\end{align*}
which proves the claim. 
\end{proof}
We conclude this section by proving that the associated semigroup
can be extended to $X_{\rho}^{\mu}$ for each $\mu\leq\rho.$ 
\begin{thm}
Let $\rho>s_{0}(M,A)$ and let $T^{\rho}$ denote the associated semigroup
with $(M,A)$ on $D_{\rho}.$ Then for large enough $\rho,$ $T^{\rho}$
extends to a $C_{0}$-semigroup on $\left(L_{2}(\Omega)\times L_{2}(\Omega)^{n}\right)\times L_{2,\mu}(\R_{\leq0};L_{2}(\Omega)\times L_{2}(\Omega)^{n})$
for each $\mu\leq\rho.$ 
\end{thm}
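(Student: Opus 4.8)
The plan is to verify condition~(ii) of \prettyref{thm:HY}. By \prettyref{prop:function_r} we have, for $g\in\His_{\rho}$ and $v\coloneqq S_{\rho}(\Gamma_{\rho}g\delta_{0}-K_{\rho}g)=T_{1}^{\rho}(\cdot)(g(0-),g)$, that $r_{g}(\lambda)=\int_{0}^{\infty}\e^{-\lambda t}v(t)\d t$ for $\lambda>\rho$; hence it suffices to establish a pointwise exponential bound
\[
\|v(t)\|\leq M\e^{\omega t}\bigl(\|g(0-)\|+\|g\|_{L_{2,\mu}(\R;L_{2}(\Omega)\times L_{2}(\Omega)^{n})}\bigr)\qquad(t\geq0)
\]
that is uniform over $g\in\His_{\rho}$, since then differentiating under the integral gives $\|r_{g}^{(k)}(\lambda)\|\leq M k!\,(\lambda-\omega)^{-(k+1)}(\cdots)$ for $\lambda>\omega$, which is~(ii). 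Together with the identification of $X_{\rho}^{\mu}$ in \prettyref{prop:His_delay} and well-posedness for $\rho>s_{0}(M,A)$ (guaranteed for $\rho$ large by \prettyref{prop:well_posed_delay}), \prettyref{thm:HY} then yields the assertion, ``large enough $\rho$'' meaning nothing more than $\rho>s_{0}(M,A)$.

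To obtain the bound on $v$ I would treat the delay part of $M$ as a perturbation of the wave equation. Put $M_{0}\coloneqq\bigl(\begin{smallmatrix}1&0\\0&k^{-1}\end{smallmatrix}\bigr)$, which is bounded, selfadjoint and strictly positive since $k\geq d>0$, and note that $\partial_{t,\rho}M(\partial_{t,\rho})w=\partial_{t,\rho}M_{0}w+\bigl(\begin{smallmatrix}M_{1}(\partial_{t,\rho})w\\0\end{smallmatrix}\bigr)$. Consequently the equation $(\partial_{t,\rho}M(\partial_{t,\rho})+A)v=\Gamma_{\rho}g\delta_{0}-K_{\rho}g$ turns into $(\partial_{t,\rho}M_{0}+A)v=\Gamma_{\rho}g\delta_{0}-K_{\rho}g-\bigl(\begin{smallmatrix}M_{1}(\partial_{t,\rho})v\\0\end{smallmatrix}\bigr)$. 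Since $M_{0}>0$ and $A$ is skew-selfadjoint, the evolutionary problem for $(M_{0},A)$ is well-posed with $s_{0}(M_{0},A)=0$; denoting its solution operator (extended to $H_{\rho}^{-1}$, cf.\ \prettyref{thm:well_posed} and \prettyref{prop:extension_sol_op}) by $S_{\rho}^{0}$ and applying it, one obtains the fixed-point identity $v=v_{0}-\mathcal{K}v$ in $L_{2,\rho}(\R;L_{2}(\Omega)\times L_{2}(\Omega)^{n})$, where $v_{0}\coloneqq S_{\rho}^{0}(\Gamma_{\rho}g\delta_{0}-K_{\rho}g)$ and $\mathcal{K}w\coloneqq S_{\rho}^{0}\bigl(\begin{smallmatrix}M_{1}(\partial_{t,\rho})w\\0\end{smallmatrix}\bigr)$. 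The operator $B\coloneqq M_{0}^{-1}A$ is skew-selfadjoint for the equivalent inner product $\langle M_{0}\,\cdot\,,\cdot\rangle$, hence generates a $C_{0}$-group $(\e^{-tB})_{t\in\R}$ that is unitary in that inner product and therefore bounded in the original norm, $\|\e^{-tB}\|\leq\kappa$ say; a direct verification gives $S_{\rho}^{0}(M_{0}x\delta_{0})=\chi_{\R_{\geq0}}\e^{-\cdot B}x$ and the Duhamel formula $(S_{\rho}^{0}F)(t)=\int_{0}^{t}\e^{-(t-s)B}M_{0}^{-1}F(s)\d s$ whenever $\spt F\subseteq\R_{\geq0}$.

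With this in hand the estimates run as follows. By \prettyref{prop:His_delay}, $\Gamma_{\rho}g=M_{0}g(0-)$ and $K_{\rho}g=\chi_{\R_{\geq0}}(\m)\bigl(\begin{smallmatrix}M_{1}(\partial_{t,\rho})g\\0\end{smallmatrix}\bigr)$; using $\spt g\subseteq\R_{\leq0}$ and the substitution $s\mapsto s-h_{j}$, $K_{\rho}g$ is supported in $[0,\bar h]$ with $\bar h\coloneqq\max_{j}h_{j}$ and $\|K_{\rho}g\|_{L_{2}([0,\bar h];\cdot)}\leq D\|g\|_{L_{2}([-\bar h,0];\cdot)}$, $D\coloneqq\|c_{0}\|+\sum_{i}\|c_{i}\|\|k^{-1}\|$, so the Duhamel formula yields $\|v_{0}(t)\|\leq\kappa\|g(0-)\|+\kappa\|M_{0}^{-1}\|\sqrt{\bar h}\,D\,\|g\|_{L_{2}([-\bar h,0];\cdot)}\eqqcolon B_{g}$ uniformly in $t\geq0$. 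Since $M_{1}(\partial_{t,\rho})$ acts only through the shifts $\tau_{-h_{j}}$ with $h_{j}\geq\underline h\coloneqq\min_{j}h_{j}>0$ and $S_{\rho}^{0}$ is causal, $\mathcal{K}$ shifts supports to the right by at least $\underline h$, so iterating $v=v_{0}-\mathcal{K}v$ gives on every interval $[0,T]$ the \emph{finite} sum $v=\sum_{m=0}^{N}(-1)^{m}\mathcal{K}^{m}v_{0}$ for $N\geq T/\underline h$; combined with $\|\mathcal{K}w(t)\|\leq\kappa\|M_{0}^{-1}\|D\int_{0}^{t}\sup_{r\leq s}\|w(r)\|\d s$ and the usual induction this gives $\|v(t)\|\leq B_{g}\e^{at}$ with $a\coloneqq\kappa\|M_{0}^{-1}\|D$. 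Finally, for $g$ supported in $\R_{\leq0}$ one has $\|g\|_{L_{2}([-\bar h,0];\cdot)}\leq\max\{1,\e^{-\mu\bar h}\}\,\|g\|_{L_{2,\mu}(\R;\cdot)}$, so the bound on $v$ is of the required form with $\omega\coloneqq\max\{a,\rho\}$.

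The hard part is precisely this uniform exponential estimate with the right-hand side measured in the $L_{2,\mu}$-norm for an \emph{arbitrary} $\mu\leq\rho$ (possibly very negative): a naive estimate would involve $\|g\|_{L_{2,\rho}}$, which is too large. Two features resolve it: the finite memory, so that $v_{0}$ — and hence $v$ — depends on $g$ only through $g(0-)$ and $g|_{[-\bar h,0]}$, whence the weight $\e^{-2\mu s}$ only enters on the bounded interval $[-\bar h,0]$; and the strictly positive minimal delay $\underline h>0$, which makes $\mathcal{K}$ a genuine Volterra operator, so the Neumann series for $v$ is locally finite and obeys a Gronwall-type estimate with explicit constants. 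The remaining items — well-posedness of $(M_{0},A)$, the two representation formulas for $S_{\rho}^{0}$, the $L_{2,\rho}$-membership needed to justify the iteration, and the support/norm bookkeeping for $K_{\rho}g$ and $\mathcal{K}$ — are routine.
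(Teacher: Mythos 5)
Your proposal is correct in substance, but it takes a genuinely different route from the paper (and in fact proves a bit more). Both arguments treat the delay term as a perturbation of the problem for $(E,A)$ with $E=\operatorname{diag}(1,k^{-1})$, but the paper never verifies condition (ii) of \prettyref{thm:HY}; instead it shows directly that $T_{1}^{\rho}$ is bounded from $D_{\rho}\subseteq X_{\rho}^{\mu}$ into $C_{\rho}(\R_{\geq0};\cdot)$ and invokes \prettyref{prop:T1_suffices}. Concretely, the paper first extends the unperturbed semigroup $\tilde{T}^{\rho}$ via \prettyref{prop:IV_DAE} and \prettyref{thm:HY_DAE} (skew-selfadjointness of $\sqrt{E^{-1}}A\sqrt{E^{-1}}$), then writes the $\delta_{0}$-part of $T_{1}^{\rho}$ as $\bigl(1+(\partial_{t,\rho}E+A)^{-1}(M_{1}(\partial_{t,\rho}),0)^{\mathsf{T}}\bigr)^{-1}$ applied to the unperturbed solution and inverts by a Neumann series -- this is precisely where ``large enough $\rho$'' enters, via $\|M_{1}(\partial_{t,\rho})\|\to0$ as $\rho\to\infty$ -- and finally upgrades the resulting $L_{2,\rho}$-bound to a continuous-in-time bound through the Duhamel estimate for $(\partial_{t,\rho}E+A)^{-1}$ together with two applications of the closed graph theorem. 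You instead exploit the strictly positive minimal delay: since $\mathcal{K}$ shifts supports to the right by at least $\min_{j}h_{j}>0$, the Neumann series is locally finite, a Gronwall-type iteration gives the pointwise bound $\|v(t)\|\leq M\e^{\omega t}\bigl(\|g(0-)\|+\|g\|_{L_{2,\mu}}\bigr)$, and (ii) of \prettyref{thm:HY} follows by differentiating the Laplace integral from \prettyref{prop:function_r}. What your route buys: explicit constants, no Neumann series or closed graph theorem, and the conclusion for every $\rho>s_{0}(M,A)$ rather than only for large $\rho$; you also make explicit the finite-memory observation (only $g(0-)$ and $g|_{[-\max_{j}h_{j},0]}$ enter), which is exactly what is needed to dominate the relevant quantities by $\|(g(0-),g)\|_{X_{\rho}^{\mu}}$ for arbitrary $\mu\leq\rho$ -- a point the paper passes over rather quickly in the step ``$\leq C_{1}\|g\|_{L_{2,\rho}}\leq\tilde{C}\|(g(0-),g)\|_{X_{\rho}^{\mu}}$''. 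What the paper's route buys is robustness: it only uses causality and $\|M_{1}(\partial_{t,\rho})\|\to0$, so it would also cover perturbations without a positive minimal delay (e.g.\ distributed delays), where your locally finite iteration breaks down. The steps you flag as routine (the formula $S_{\rho}^{0}(Ex\delta_{0})=\chi_{\R_{\geq0}}\e^{-\cdot B}x$, the variation-of-constants formula, and the fixed-point identity $v=v_{0}-\mathcal{K}v$, justified by applying $\partial_{t,\rho}^{-1}$ and working in $L_{2,\rho}$) are indeed at the level of rigour the paper itself treats as routine, so I see no genuine gap.
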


\begin{proof}
The proof will be done by a perturbation argument. For doing so, we
consider the evolutionary problem associated with $\left(E,A\right)$,
where 
\[
E\coloneqq\left(\begin{array}{cc}
1 & 0\\
0 & k^{-1}
\end{array}\right).
\]
We note that this problem is well-posed with $s_{0}\left(E,A\right)=0$
(compare the proof of \prettyref{prop:well_posed_delay}). We denote
the associated semigroup by $\tilde{T}^{\rho}$. By \prettyref{prop:IV_DAE}
we know that the closure of the initial value space for $\tilde{T}^{\rho}$
is given by
\[
\overline{\{x\in\dom(A)\,;\,E^{-1}Ax\in\dom(A)\}}=L_{2}(\Omega)\times L_{2}(\Omega)^{n}.
\]
Moreover, by \prettyref{thm:HY_DAE} $\tilde{T}^{\rho}$ extends to
a $C_{0}$-semigroup on $\left(L_{2}(\Omega)\times L_{2}(\Omega)^{n}\right)\times L_{2,\mu}(\R_{\leq0};L_{2}(\Omega)\times L_{2}(\Omega)^{n})$
if and only if 
\[
\|\left((\lambda E+A)^{-1}E\right)^{n}\|\leq\frac{M}{(\lambda-\omega)^{n}}\quad(\lambda>\omega,n\in\N)
\]
for some $M\geq1,\,\omega\geq\rho$. We note that $E$ is selfadjoint
and strictly positive definite and thus, 
\[
(\lambda E+A)^{-1}=\sqrt{E^{-1}}\left(\lambda+\sqrt{E^{-1}}A\sqrt{E^{-1}}\right)^{-1}\sqrt{E^{-1}}.
\]
The latter gives 
\[
\left((\lambda E+A)^{-1}E\right)^{n}=\sqrt{E^{-1}}\left(\lambda+\sqrt{E^{-1}}A\sqrt{E^{-1}}\right)^{-n}\sqrt{E}
\]
for each $n\in\N$. Since $A$ is skew-selfadjoint, so is $\sqrt{E^{-1}}A\sqrt{E^{-1}}$
and thus, 
\[
\|\left((\lambda E+A)^{-1}E\right)^{n}\|\leq\frac{\|\sqrt{E}\|\|\sqrt{E^{-1}}\|}{\lambda^{n}}\quad(\lambda>0,n\in\N)
\]
and hence, $\tilde{T}^{\rho}$ extends to a bounded $C_{0}$-semigroup
on $\left(L_{2}(\Omega)\times L_{2}(\Omega)^{n}\right)\times L_{2,\mu}(\R_{\leq0};L_{2}(\Omega)\times L_{2}(\Omega)^{n})$.
Now we come to the semigroup $T^{\rho}.$ We will prove that $T_{1}^{\rho}:D_{\rho}\subseteq\left(L_{2}(\Omega)\times L_{2}(\Omega)^{n}\right)\times L_{2,\mu}(\R_{\leq0};L_{2}(\Omega)\times L_{2}(\Omega)^{n})\to C_{\omega}(\R_{\geq0};L_{2}(\Omega)\times L_{2}(\Omega)^{n})$
is bounded for some $\omega\in\R$, which would imply the claim by
\prettyref{prop:T1_suffices}. Let $(g(0-),g)\in D_{\rho}.$ We then
have, using the formulas in \prettyref{prop:His_delay},
\begin{align*}
T_{1}^{\rho}(g(0-),g) & =\left(\partial_{t,\rho}M(\partial_{t,\rho})+A\right)^{-1}\left(\delta_{0}Eg(0-)-\chi_{\R_{\geq0}}(\m)\left(\begin{array}{c}
M_{1}(\partial_{t,\rho})\\
0
\end{array}\right)g\right)\\
 & =\left(\partial_{t,\rho}E+A+\left(\begin{array}{c}
M_{1}(\partial_{t,\rho})\\
0
\end{array}\right)\right)^{-1}\left(\delta_{0}Eg(0-)\right)-\\
 & \quad-\left(\partial_{t,\rho}M(\partial_{t,\rho})+A\right)^{-1}\left(\chi_{\R_{\geq0}}(\m)\left(\begin{array}{c}
M_{1}(\partial_{t,\rho})\\
0
\end{array}\right)g\right).
\end{align*}
The first term in the latter expression can be rewritten as 
\[
\left(\partial_{t,\rho}E+A+\left(\begin{array}{c}
M_{1}(\partial_{t,\rho})\\
0
\end{array}\right)\right)^{-1}\left(\delta_{0}Eg(0-)\right)=\left(1+\left(\partial_{t,\rho}E+A\right)^{-1}\left(\begin{array}{c}
M_{1}(\partial_{t,\rho})\\
0
\end{array}\right)\right)^{-1}\tilde{T}_{1}^{\rho}\left(\delta_{0}Eg(0-)\right).
\]
Now, since $\tilde{T}_{1}^{\rho}\left(\delta_{0}Eg(0-)\right)\in L_{2,\rho}(\R;L_{2}(\Omega)\times L_{2}(\Omega)^{n})$
and $\|M_{1}(\partial_{t,\rho})\|\to0$ as $\rho\to\infty,$ we infer
that 
\[
\left(\partial_{t,\rho}E+A+\left(\begin{array}{c}
M_{1}(\partial_{t,\rho})\\
0
\end{array}\right)\right)^{-1}\left(\delta_{0}Eg(0-)\right)\in L_{2,\rho}(\R;L_{2}(\Omega)\times L_{2}(\Omega)^{n})
\]
 for $\rho$ large enough by the Neumann series. Since clearly 
\[
L_{2}(\Omega)\times L_{2}(\Omega)^{n}\ni x\mapsto\left(\partial_{t,\rho}E+A+\left(\begin{array}{c}
M_{1}(\partial_{t,\rho})\\
0
\end{array}\right)\right)^{-1}\left(\delta_{0}Ex\right)\in H_{\rho}^{-1}(\R;L_{2}(\Omega)\times L_{2}(\Omega)^{n})
\]
is bounded, we obtain 
\[
\left\Vert \left(\partial_{t,\rho}E+A+\left(\begin{array}{c}
M_{1}(\partial_{t,\rho})\\
0
\end{array}\right)\right)^{-1}\left(\delta_{0}Eg(0-)\right)\right\Vert _{L_{2,\rho}(\R;L_{2}(\Omega)\times L_{2}(\Omega)^{n})}\leq C\|g(0-)\|_{L_{2}(\Omega)\times L_{2}(\Omega)^{n}}
\]
for some $C\geq0$ by the closed graph theorem. Hence, 
\begin{align*}
 & \|T_{1}^{\rho}(g(0-),g)\|_{L_{2,\rho}(\R;L_{2}(\Omega)\times L_{2}(\Omega)^{n})}\\
 & \leq C\|g(0-)\|_{L_{2}(\Omega)\times L_{2}(\Omega)^{n}}+\left\Vert \left(\partial_{t,\rho}M(\partial_{t,\rho})+A\right)^{-1}\left(\chi_{\R_{\geq0}}(\m)\left(\begin{array}{c}
M_{1}(\partial_{t,\rho})\\
0
\end{array}\right)g\right)\right\Vert _{L_{2,\rho}(\R;L_{2}(\Omega)\times L_{2}(\Omega)^{n})}\\
 & \leq C\|g(0-)\|_{L_{2}(\Omega)\times L_{2}(\Omega)^{n}}+C_{1}\|g\|_{L_{2,\rho}(\R;L_{2}(\Omega)\times L_{2}(\Omega)^{n})}\\
 & \leq\tilde{C}\|(g(0-),g)\|_{X_{\rho}^{\mu}}
\end{align*}
for suitable $C_{1},\tilde{C}\geq0.$ Thus, 
\[
T_{1}^{\rho}:D_{\rho}\subseteq X_{\rho}^{\mu}\to L_{2,\rho}(\R_{\geq0};L_{2}(\Omega)\times L_{2}(\Omega)^{n})
\]
is bounded and hence extends to a bounded operator on $X_{\rho}^{\mu}$.
Moreover, for $f\in C_{c}^{\infty}(\R_{\geq0};L_{2}(\Omega)\times L_{2}(\Omega)^{n})$
we may estimate 
\begin{align*}
\left\Vert \left((\partial_{t,\rho}E+A)^{-1}f\right)(t)\right\Vert  & =\left\Vert \int_{0}^{t}\tilde{T}_{\rho}^{(1)}(t-s)E^{-1}f(s)\d s\right\Vert \\
 & \leq M\|E^{-1}\|\int_{0}^{t}\|f(s)\|\d s\\
 & \leq\frac{M\|E^{-1}\|}{\sqrt{2\rho}}\|f\|_{L_{2,\rho}(\R;L_{2}(\Omega)\times L_{2}(\Omega)^{n})}\e^{\rho t}
\end{align*}
for each $t\geq0$, which proves that
\[
(\partial_{t,\rho}E+A)^{-1}:L_{2,\rho}(\R_{\geq0};L_{2}(\Omega)\times L_{2}(\Omega)^{n})\to C_{\rho}(\R_{\geq0};L_{2}(\Omega)\times L_{2}(\Omega)^{n})
\]
is bounded. Now, let $(x,g)\in X_{\rho}^{\mu}$ and set $u\coloneqq T_{1}^{\rho}(x,g)\in L_{2,\rho}(\R_{\geq0};L_{2}(\Omega)\times L_{2}(\Omega)^{n})$.
Then 
\[
(\partial_{t,\rho}E+A)u=\delta_{0}Ex-\chi_{\R_{\geq0}}(\m)\left(\begin{array}{c}
M_{1}(\partial_{t,\rho})\\
0
\end{array}\right)g-\left(\begin{array}{c}
M_{1}(\partial_{t,\rho})\\
0
\end{array}\right)u
\]
and hence, we derive that 
\begin{align*}
u & =(\partial_{t,\rho}E+A)^{-1}\left(\delta_{0}Ex-\chi_{\R_{\geq0}}(\m)\left(\begin{array}{c}
M_{1}(\partial_{t,\rho})\\
0
\end{array}\right)g-\left(\begin{array}{c}
M_{1}(\partial_{t,\rho})\\
0
\end{array}\right)u\right)\\
 & =\tilde{T}_{1}^{\rho}(x,g)-(\partial_{t,\rho}E+A)^{-1}\left(\chi_{\R_{\geq0}}(\m)\left(\begin{array}{c}
M_{1}(\partial_{t,\rho})\\
0
\end{array}\right)g+\left(\begin{array}{c}
M_{1}(\partial_{t,\rho})\\
0
\end{array}\right)u\right)\\
 & \in C_{\rho}(\R_{\geq0};L_{2}(\Omega)\times L_{2}(\Omega)^{n})
\end{align*}
and hence, again by the closed graph theorem 
\[
T_{1}^{\rho}:X_{\rho}^{\mu}\to C_{\rho}(\R_{\geq0};L_{2}(\Omega)\times L_{2}(\Omega)^{n})
\]
is bounded. 
\end{proof}

\end{document}